\newtheorem{theorem}{Theorem}[section]
\newtheorem{lemma}{Lemma}[section]
\newtheorem{definition}{Definition}[section]
\newtheorem{proposition}{Proposition}[section]
\newtheorem{remark}{Remark}[section]
\newtheorem{alemma}{Lemma}
\newenvironment{proof}{{\noindent \bf Proof:}}{\hfill$\Box$\medskip}
\newenvironment{aproof}{}{\hfill$\Box$\medskip}
\definecolor{lred}{rgb}{1,0.8,0.8}
\definecolor{lblue}{rgb}{0.8,0.8,1}
\definecolor{dred}{rgb}{0.6,0,0}
\definecolor{dblue}{rgb}{0,0,0.5}
\definecolor{dgreen}{rgb}{0,0.5,0.5}
 \title{Locally upper Lipschitz of the perturbed KKT systems for the Ky Fan $k$-norm matrix conic optimization problem\footnote{This work is supported by the National Natural Science Foundation of China under project No. 11571120 and No. 11501219,
 and the Natural Science Foundation of Guangdong Province under project No. 2015A030313214 and No. 2015A030310298.} }
 \author{Yulan Liu\footnote{Ylliu@gdut.edu.cn. School of Mathematics, South China University of Technology, Guangzhou.}\ \ {\rm and}\ \
 Shaohua Pan\footnote{Corresponding author (shhpan@scut.edu.cn). School of Mathematics, South China University of Technology,
 Guangzhou (510641), China.}}
 \date{February 20, 2016 (revised)}
\begin{document}

 \maketitle

 \begin{abstract}
  In this note we consider the Ky Fan $k$-norm matrix conic optimization problem,
  which includes the nuclear norm regularized minimization problem as a special case.
  For this class of nonpolyhedral matrix conic optimization problems, we show that the solution
  mappings of two types of perturbed KKT systems are locally upper Lipschitz at the origin for a KKT point,
  under the second-order sufficient condition of the stationary point and the strict Robinson's CQ
  for the associated multiplier. This result depends on an equivalent characterization
  for the point pair in the graphical derivative of the normal cone mapping of the $k$-norm matrix cone,
  and implies a local error bound which plays a crucial role in the convergence rate analysis of algorithms.

  \bigskip
  \noindent
  {\bf Keywords:} matrix conic optimization; perturbed KKT system; locally upper Lipschitz;
  second-order sufficient condition; strict Robinson's CQ

  \bigskip
  \noindent
  {\bf AMS Subject Classifictions (2010): 90C30, 90C31, 54C60}

 \end{abstract}

  \section{Introduction}\label{sec1}

  Throughout this note, we write $\mathbb{X}=\mathbb{R}\times\mathbb{R}^{m\times n}$
  with $m\le n$, where $\mathbb{R}^{m\times n}$ is the vector space of all $m\times n$
  real matrices endowed with the trace inner product $\langle \cdot,\cdot\rangle$ and
  its induced Frobenius norm $\|\cdot\|$. Let
  \(
    K:=\big\{(t,X)\in\mathbb{X}\ |\ \|\sigma(X)\|_{(k)}\le t\big\}
  \)
 be the Ky Fan $k$-norm matrix cone for an integer $k\in[1,m]$,
 where $\sigma(X)\in\mathbb{R}^m$ denotes the singular value vector of $X$ with nonincreasing entries,
 and $\|\cdot\|_{(k)}$ means the vector Ky Fan $k$-norm.
 Given twice continuously differentiable $f\!:\mathbb{X}\to\mathbb{R}$,
 $h\!:\mathbb{X}\to\mathbb{R}^p$ and $G\!:\mathbb{X}\to\mathbb{X}$,
 we are interested in the following Ky Fan $k$-norm matrix conic optimization problem
 \begin{equation}\label{NMCP}
   \min_{\mathcal{X}\in \mathbb{X}}\left\{f(\mathcal{X})\!:\ h(\mathcal{X})=0,\ G(\mathcal{X})\in K\right\},
 \end{equation}
 which is well-known to have a host of applications. A typical example is
 the popular nuclear norm regularized least squares problem (see, e.g., \cite{Candes09,Recht10,FPST13,CLMW11})
 of the form
 \begin{equation}\label{NLS}
   \min_{X\in\mathbb{R}^{m\times n}}\left\{\frac{1}{2}\|\mathcal{A}(X)-b\|^2+\rho\|X\|_{*}\!:\ \mathcal{E}(X)=d\right\}
 \end{equation}
 where $\mathcal{A}\!:\mathbb{R}^{m\times n}\to\mathbb{R}^p$ and $\mathcal{E}\!:\mathbb{R}^{m\times n}\to\mathbb{R}^q$
 are the linear mappings, $b\in\mathbb{R}^p$ and $d\in\mathbb{R}^q$ are
 the vectors, and $\rho>0$ is the regularizing parameter.
 Clearly, one may rewrite \eqref{NLS} as the form \eqref{NMCP} with $k=m$, $f(\mathcal{X})=\frac{1}{2}\|\mathcal{A}(X)-b\|^2+\rho t$,
 $h(\mathcal{X})=\mathcal{E}(X)-d$ and $G(\mathcal{X})=\mathcal{X}$ for $\mathcal{X}=(t,X)$.
 For other special examples, the interested reader may refer to \cite{DingC10}.

 \medskip

 Let $L\!:\mathbb{X}\times\mathbb{R}^p\times\mathbb{X}\to\mathbb{R}$ denote the Lagrange function
 of problem \eqref{NMCP}, defined by
 \[
   L(\mathcal{X};\lambda,\mathcal{Y}):=f(\mathcal{X})+\langle h(\mathcal{X}),\lambda\rangle+\langle G(\mathcal{X}),\mathcal{Y}\rangle
 \]
  where $\lambda\in\mathbb{R}^p$ and $\mathcal{Y}\in\mathbb{X}$ are the Lagrange multipliers associated to
  the equality constraint $h(\mathcal{X})=0$ and the conic constraint $G(\mathcal{X})\in K$,
  respectively. It is known that the Karush-Kuhn-Tucker (KKT) optimality conditions for problem \eqref{NMCP} take the form
 \begin{equation}\label{equa-KKT}
   L_{\mathcal{X}}'\big(\mathcal{X};\lambda,\mathcal{Y}\big)=0,\ \ h(\mathcal{X})=0\ \ {\rm and}\ \
   \mathcal{Y}\in\mathcal{N}_{K}(G(\mathcal{X}))
  \end{equation}
 where $L_{\mathcal{X}}'$ is the derivative of $L$ with respect to $\mathcal{X}$,
 and $\mathcal{N}_{K}(G(\mathcal{X}))$ is the normal cone of $K$ at $G(\mathcal{X})$
 in the sense of convex analysis. By the definition of $\mathcal{N}_{K}$, it is easy to obtain
 \begin{equation}\label{decom}
   K\ni G(\mathcal{X}) \perp \mathcal{Y}\in K^{\circ}
   \Longleftrightarrow \mathcal{Y}\in\mathcal{N}_{K}(G(\mathcal{X}))\Longleftrightarrow
   G(\mathcal{X})-\Pi_{K}(G(\mathcal{X})+\mathcal{Y})=0,
 \end{equation}
  where $K^{\circ}$ is the negative polar cone of $K$, and $\Pi_{K}\!:\mathbb{X}\to\mathbb{X}$
  is the projection operator associated to $K$. So the KKT optimality conditions in \eqref{equa-KKT}
  can be equivalently written as
 \begin{equation}\label{Psi-map}
  \Psi\big(\mathcal{X},\lambda,\mathcal{Y}\big)
  :=\left(\begin{matrix}
    L_{\mathcal{X}}'\big(\mathcal{X},\lambda,\mathcal{Y}\big)\\
    h(\mathcal{X})\\
   G(\mathcal{X})-\Pi_{K}(G(\mathcal{X})+\mathcal{Y})
   \end{matrix}\right)=0.
  \end{equation}
  In the sequel, for a given feasible point $\overline{\mathcal{X}}\in\mathbb{X}$ of problem \eqref{NMCP},
  we denote by $\mathcal{M}(\overline{\mathcal{X}})$ the set of Lagrange multipliers,
  and say that $\overline{\mathcal{X}}$ is a stationary point of problem \eqref{NMCP} if and only if
  $\mathcal{M}(\overline{\mathcal{X}})\ne \emptyset$. It is well known that if $\overline{\mathcal{X}}$ is
  a locally optimal solution to problem \eqref{NMCP}, then $\overline{\mathcal{X}}$ may not be a stationary point.
  If Robinson's constraint qualification (CQ) holds at $\overline{\mathcal{X}}$,
  then $\mathcal{M}(\overline{\mathcal{X}})\ne \emptyset$ and $\overline{\mathcal{X}}$ is a stationary point.
  When $\overline{\mathcal{X}}$ is a stationary point, we say that a Lagrange multiplier $(\overline{\lambda},\overline{\mathcal{Y}})\in\mathcal{M}(\overline{\mathcal{X}})$ satisfies the strict Robinson's CQ if
  \begin{equation}\label{SCQ}
    \left(\begin{matrix}
       h'(\overline{\mathcal{X}})\\
       G'(\overline{\mathcal{X}})
       \end{matrix}\right)\mathbb{X}
       +\left(\begin{matrix}
       0\\
       \mathcal{T}_{K}(G(\overline{\mathcal{X}}))\cap\overline{\mathcal{Y}}^{\perp}
       \end{matrix}\right)=
       \left(\begin{matrix}
       \mathbb{R}^p\\
       \mathbb{X}
       \end{matrix}\right),
  \end{equation}
 where $\mathcal{T}_{K}(G(\overline{\mathcal{X}}))$ is the contingent cone
 of $K$ at $G(\overline{\mathcal{X}})$, and $\overline{\mathcal{Y}}^{\perp}\!:=\big\{\mathcal{Z}\in\mathbb{X}:\langle \mathcal{Z},\overline{\mathcal{Y}}\rangle=0\big\}$.

 \medskip

 This note is mainly concerned with the solution mappings of the perturbed KKT systems
 $\Psi(\mathcal{X},\lambda,\mathcal{Y})=\delta$ and $\widetilde{\Psi}(\delta,\mathcal{X},\lambda,\mathcal{Y})=0$
 for $\delta=(\delta_f,\delta_h,\delta_G)\!\in\mathbb{X}\times\mathbb{R}^p\times\mathbb{X}$,
 where
\begin{equation}\label{WPsi-map}
   \widetilde{\Psi}\big(\delta,\mathcal{X},\lambda,\mathcal{Y}\big)
   \!:=\!\left(\begin{matrix}
             L_{\mathcal{X}}'\big(\mathcal{X},\lambda,\mathcal{Y}\big)-\delta_{\!f}\\
                h(\mathcal{X})-\delta_h\\
                G(\mathcal{X})-\delta_G-\Pi_{K}\big(G(\mathcal{X})-\delta_{G}+\mathcal{Y}\big)
       \end{matrix}\right).
 \end{equation}
 Notice that $\widetilde{\Psi}(\delta,\mathcal{X},\lambda,\mathcal{Y})\!=\!0$
 is the KKT system of the following perturbation of \eqref{NMCP}
 \begin{equation}\label{PNMCP}
  \min_{\mathcal{X}\in\mathbb{X}}\Big\{f(\mathcal{X})-\langle\delta_{f},\mathcal{X}\rangle\!:
  \ h(\mathcal{X})=\delta_h,\,G(\mathcal{X})\in\delta_{G}+K\Big\},
 \end{equation}
 whereas $\Psi(\mathcal{X},\lambda,\mathcal{Y})=\delta$ does not correspond to
 the KKT system of any perturbation of \eqref{NMCP}.
 Let $\mathcal{J}\!:\mathbb{X}\times\mathbb{R}^p\times\mathbb{X}\rightrightarrows\mathbb{X}\times\mathbb{R}^p\times\mathbb{X}$
 and $\widetilde{\mathcal{J}}\!:\mathbb{X}\times\mathbb{R}^p\times\mathbb{X}\rightrightarrows\mathbb{X}\times\mathbb{R}^p\times\mathbb{X}$
 be the solution mappings associated to the above two types of perturbed KKT systems, respectively, i.e.,
  \begin{equation}\label{Jmap}
  \mathcal{J}(\delta):=\Big\{(\mathcal{X},\lambda,\mathcal{Y})\in \mathbb{X}\times\mathbb{R}^{p}\times\mathbb{X}
  \ |\ \Psi\big(\mathcal{X},\lambda,\mathcal{Y}\big)=\delta\Big\}
 \end{equation}
  and
  \begin{equation}\label{WJmap}
  \widetilde{\mathcal{J}}(\delta):=\left\{(\mathcal{X},\lambda,\mathcal{Y})\in \mathbb{X}\times\mathbb{R}^{p}\times\mathbb{X}
  \ |\ \widetilde{\Psi}\big(\delta,\mathcal{X},\lambda,\mathcal{Y}\big)=0\right\}.
 \end{equation}
  Clearly, $(\overline{\mathcal{X}},\overline{\lambda},\overline{\mathcal{Y}})$ is a KKT point of \eqref{NMCP}
  iff $(0,\overline{\mathcal{X}},\overline{\lambda},\overline{\mathcal{Y}})\in{\rm gph}\,\mathcal{J}$ or
  ${\rm gph}\,\widetilde{\mathcal{J}}$, while $(\overline{\mathcal{X}},\overline{\lambda},\overline{\mathcal{Y}})$
  is a KKT point of \eqref{PNMCP} associated to $\overline{\delta}=(\overline{\delta}_f,\overline{\delta}_h,\overline{\delta}_G)
  \in\mathbb{X}\times\mathbb{R}^p\times\mathbb{X}$ iff
  $(\overline{\delta},\overline{\mathcal{X}},\overline{\lambda},\overline{\mathcal{Y}})\in {\rm gph}\,\widetilde{\mathcal{J}}$.

  \medskip

  The main contribution of this work is to establish the locally upper Lipschitz
  of the multifunctions $\mathcal{J}$ and $\widetilde{\mathcal{J}}$ at $0$ for
  $\overline{\mathcal{W}}=(\overline{\mathcal{X}},\overline{\lambda},\overline{\mathcal{Y}})
  \in\mathcal{J}(0)=\widetilde{\mathcal{J}}(0)$, under the second-order sufficient condition
  of $\overline{\mathcal{X}}$ and the strict Robinson's CQ for $(\overline{\lambda},\overline{\mathcal{Y}})$.
  As will be shown in Remark \ref{main-remark}(b), the locally upper Lipschitz of $\mathcal{J}$ at $0$
  implies that the distance from any point $(\mathcal{X},\lambda,\mathcal{Y})$ near $\overline{\mathcal{W}}$
  to the whole set of KKT points can be controlled by the KKT system residual at this point.
  This local error bound plays a key role in achieving the convergence rate of the first-order
  algorithms for \eqref{NMCP}, and especially the nuclear norm regularized least-squares problem.
  While the locally upper Lipschitz of $\widetilde{\mathcal{J}}$ is important
  in the perturbation theory of optimization. This is the main motivation of this work.

  \medskip

  We notice that Zhang and Zhang \cite{ZZ15} recently derived the locally upper Lipschitz of
  the KKT mapping for the canonical perturbation of nonlinear semidefinite programming (SDP)
  problems by the equivalent Kojima's reformulation, under the second-order sufficient condition
  and the strict Robinson's CQ. Later, Han, Sun and Zhang \cite{HSZ15} established
  the locally upper Lipschitz of the perturbed KKT system for the nonlinear SDP problem
  under the same assumption and used it to provide a sufficient condition
  to guarantee the linear convergence rate of the ADMM (alternating direction method of multipliers)
  for the convex composite quadratic SDP problem. This note is also motivated by their works
  and the wide applications of the Ky Fan $k$-norm matrix conic optimization.
  When making revisions for our manuscript, we learn that Ding, Sun and Zhang \cite{DingSZ15}
  provide an equivalent characterization for the isolated calmness (i.e., the locally upper Lipschitz)
  of the KKT solution mapping for a large class of conic optimization problems.

  \medskip

 To close this section, we present a brief summary for the notations used in this paper.
 \begin{itemize}
 \item  Let $\mathbb{S}^m$ be the vector space of all $m\times m$ real symmetric matrices,
        and $\mathbb{O}^{m\times k}$ the set of all $m\!\times k$ real matrices
        with orthonormal columns. We simplify $\mathbb{O}^{m\times m}$ as $\mathbb{O}^m$.
        For any $Z\in\mathbb{R}^{m\times n}$, $\sigma(Z)$ denotes the singular value vector
        of $Z$ whose entries are arranged in a nonincreasing order, and for any $Z\in\mathbb{S}^m$, $\lambda(Z)$ is
        the eigenvalue vector with decreasing entries. For $Z\in\mathbb{R}^{m\times n}$,
        $\mathbb{O}^{m,n}(Z)$ means the following set
         \begin{equation}\label{OmnZ}
          \mathbb{O}^{m,n}(Z):=\left\{(U,V)\in\mathbb{O}^{m}\times\mathbb{O}^{n}\ |\
          Z=U[{\rm Diag}(\sigma(Z))\ \ 0]V^{\mathbb{T}}\right\}.
          \end{equation}
          Similarly, if $Z\in\mathbb{S}^m$, then $\mathbb{O}^{m}(Z)$ denotes the set
          $\left\{P\in\mathbb{O}^{m}\ |\ Z=P{\rm Diag}(\lambda(Z))P^{\mathbb{T}}\right\}$.

 \item  For a closed set $S$ and a point $\overline{x}\in S$, $\mathcal{T}_S^{i}(\overline{x})$
         and $\mathcal{T}_S(\overline{x})$ denote the inner tangent cone and the contingent cone, respectively,
         and $\mathcal{N}_S(\overline{x})$ means the limit normal cone of $S$ at $\overline{x}$.
         When $S$ is convex, $\mathcal{N}_S(\overline{x})$ is the normal cone in the sense of convex analysis.

   \item  The $e,E$ and $I$ denote a vector, a matrix of all entries being $1$ and the unit matrix,
          respectively, whose dimensions are known from the context. For a given $Z\in\mathbb{R}^{m\times n}$
          and an index set $J\subseteq\{1,\ldots,n\}$, $Z_J$ is an $m\times |J|$ matrix consisting of those columns $Z_j$ with $j\in J$,
         and for $x\in\mathbb{R}^n$, $x_J\in\mathbb{R}^{|J|}$ is a vector consisting of $x_i$ with $i\in J$.
 \end{itemize}

  \section{Preliminaries}\label{sec2}

  In this section, $\mathbb{Y}$ and $\mathbb{Z}$ denote the finite dimensional vector spaces equipped
  with the norm $\|\cdot\|$.
  For a multifunction $\mathcal{S}\!:\mathbb{Y}\rightrightarrows\mathbb{Z}$, two important sets
  associated with it are the domain ${\rm dom}\,\mathcal{S}:=\left\{y\in\mathbb{Y}\ |\ \mathcal{S}(y)\ne \emptyset\right\}$
  and the graph ${\rm gph}\,\mathcal{S}:=\left\{(y,z)\in\mathbb{Y}\times\mathbb{Z}\ |\ z\in\mathcal{S}(y)\right\}$.
  First, we recall the locally upper Lipschitz (see \cite{Levy96,KK02}) of
  a multifunction at a point.
  \begin{definition}\label{Def2.2}
   A multifunction $\mathcal{S}\!:\mathbb{Y}\rightrightarrows\mathbb{Z}$ is said to
   be locally upper Lipschitz at $\overline{y}$ for $\overline{z}\in\mathcal{S}(\overline{y})$
   if there exist a constant $\mu\ge 0$ and neighborhoods $\mathcal{U}$ of $\overline{y}$ and
   $\mathcal{V}$ of $\overline{z}$ such that
   \[
     \mathcal{V}\cap\mathcal{S}(\overline{y})=\{\overline{z}\}
     \ \ {\rm and}\ \
     \mathcal{S}(y)\cap\mathcal{V}\subseteq \{\overline{z}\}+\mu\|y-\overline{y}\|\mathbb{B}_{\mathbb{Y}}
     \quad\ \forall y\in\mathcal{U},
   \]
   where $\mathbb{B}_{\mathbb{Y}}$ denotes the unit ball in the space $\mathbb{Y}$ centered at the origin.
  \end{definition}

   The locally upper Lipschitz concept of $\mathcal{S}$ at $\overline{y}$ for
   $\overline{z}\in\mathcal{S}(\overline{y})$ in Definition \ref{Def2.2} is different from
   the one defined by Robinson \cite{Robinson79}, which actually requires $\mathcal{S}(y)\cap\mathcal{V}$
   is upper Lipschitz at $\overline{y}$. The locally upper Lipschitz of $\mathcal{S}$ at $\overline{y}$
   for $\overline{z}\in\mathcal{S}(\overline{y})$ is also called the isolated calmness of $\mathcal{S}$
   at $\overline{y}$ for $\overline{z}\in\mathcal{S}(\overline{y})$ in \cite{HSZ15}.
   By Lemma \ref{equiv-LUL} in Appendix A, we have the following equivalent characterization
   for the locally upper Lipschitz of a multifunction.
  \begin{lemma}\label{equiv-chara-LUL}
   The locally upper Lipschitz of a multifunction $\mathcal{S}\!:\mathbb{Y}\rightrightarrows\mathbb{Z}$
   at $\overline{y}$ for $\overline{z}\in\mathcal{S}(\overline{y})$ is identified with
   the existence of a constant $\mu\ge 0$ and a neighborhood $\mathcal{V}$ of $\overline{z}$ such that
   \[
     \mathcal{V}\cap\mathcal{S}(\overline{y})=\{\overline{z}\}
     \ \ {\rm and}\ \
     \mathcal{S}(y)\cap\mathcal{V}\subseteq \{\overline{z}\}+\mu\|y-\overline{y}\|\mathbb{B}_{\mathbb{Y}}
     \quad\ \forall y\in\mathbb{Y}.
   \]
  \end{lemma}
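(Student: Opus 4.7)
The plan is to prove both implications of the claimed equivalence, with the nontrivial content being that the local inclusion in Definition~\ref{Def2.2} can be globalized to hold for \emph{every} $y\in\mathbb{Y}$ at the price of possibly shrinking the neighborhood $\mathcal{V}$ and enlarging the constant $\mu$.

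The ``$\Leftarrow$'' direction is immediate: if the displayed inclusion holds for all $y\in\mathbb{Y}$ with some $\mu\ge0$ and some neighborhood $\mathcal{V}$ of $\overline{z}$, then it holds in particular for $y$ in any neighborhood $\mathcal{U}$ of $\overline{y}$ (e.g.\ $\mathcal{U}=\mathbb{Y}$), so the defining property of local upper Lipschitz is satisfied.

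For the nontrivial ``$\Rightarrow$'' direction, invoke Definition~\ref{Def2.2} to obtain a constant $\mu_0\ge0$ and neighborhoods $\mathcal{U}_0$ of $\overline{y}$, $\mathcal{V}_0$ of $\overline{z}$ with $\mathcal{V}_0\cap\mathcal{S}(\overline{y})=\{\overline{z}\}$ and $\mathcal{S}(y)\cap\mathcal{V}_0\subseteq\{\overline{z}\}+\mu_0\|y-\overline{y}\|\mathbb{B}_{\mathbb{Y}}$ for every $y\in\mathcal{U}_0$. Pick $r>0$ with the open ball $B(\overline{y},r)\subseteq\mathcal{U}_0$, pick any $\delta>0$ with $B(\overline{z},\delta)\subseteq\mathcal{V}_0$, and define the smaller neighborhood $\mathcal{V}:=B(\overline{z},\delta)$. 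Two cases now exhaust $\mathbb{Y}$: for $y\in B(\overline{y},r)$ the already-available bound gives $\mathcal{S}(y)\cap\mathcal{V}\subseteq\mathcal{S}(y)\cap\mathcal{V}_0\subseteq\{\overline{z}\}+\mu_0\|y-\overline{y}\|\mathbb{B}_{\mathbb{Y}}$; for $y\notin B(\overline{y},r)$ we have $\|y-\overline{y}\|\ge r$, so any $z\in\mathcal{S}(y)\cap\mathcal{V}$ automatically satisfies $\|z-\overline{z}\|<\delta\le(\delta/r)\|y-\overline{y}\|$. Setting $\mu:=\max\{\mu_0,\delta/r\}$ then yields the inclusion for every $y\in\mathbb{Y}$, and the equality $\mathcal{V}\cap\mathcal{S}(\overline{y})=\{\overline{z}\}$ is inherited from $\mathcal{V}\subseteq\mathcal{V}_0$ together with $\overline{z}\in\mathcal{V}\cap\mathcal{S}(\overline{y})$ (note that the case $y=\overline{y}$ of the inclusion reduces precisely to this equality, so no separate argument is required there).

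There is no real obstacle in this argument; the only thing to watch is not to lose the ``$=\{\overline{z}\}$'' half when shrinking $\mathcal{V}_0$ to $\mathcal{V}$, which is why one chooses $\mathcal{V}$ to be an open ball containing $\overline{z}$ rather than an arbitrary subset. Since the paper invokes an auxiliary Lemma in Appendix~A for the technical step, I would simply cite that lemma to package the two-case estimate, and otherwise carry out the elementary neighborhood manipulation above.
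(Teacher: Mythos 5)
Your proof is correct and follows essentially the same strategy as the paper's Appendix A (Lemma A.1): the ``$\Leftarrow$'' direction is trivial, and for ``$\Rightarrow$'' you split $\mathbb{Y}$ into the ``near'' region $B(\overline{y},r)$ where the given local bound applies and the ``far'' region where membership of $z$ in the shrunken ball $\mathcal{V}=B(\overline{z},\delta)$ alone forces $\|z-\overline{z}\|<\delta\le(\delta/r)\|y-\overline{y}\|$, then take $\mu=\max\{\mu_0,\delta/r\}$. The only cosmetic difference is that the paper normalizes by picking $\varepsilon'$ so that the far-region constant is $1$ (forcing a case split on $\varepsilon>\delta$ versus $\varepsilon\le\delta$), whereas you keep the general constant $\delta/r$ and thereby avoid the case split; that is a mild streamlining, not a different route.
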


  In addition, from \cite{KR92,Levy96} we also have the following equivalent
  characterization for the locally upper Lipschitz property of a multifunction at a point of its graph.
  \begin{lemma}\label{SN-cond}
   Let $\mathcal{S}\!:\mathbb{Y}\rightrightarrows\mathbb{Z}$ be a multifunction.
   Then $\mathcal{S}$ is locally upper Lipschitz at $\overline{y}$ for
   $\overline{z}\in\mathcal{S}(\overline{y})$ if and only if
   $D\mathcal{S}(\overline{y}|\overline{z})(0)=\{0\}$.
  \end{lemma}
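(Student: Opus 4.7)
The plan is to prove the two implications separately by working directly with the definition of the graphical derivative, namely $w \in D\mathcal{S}(\bar{y}|\bar{z})(v)$ iff $(v,w) \in \mathcal{T}_{\mathrm{gph}\,\mathcal{S}}(\bar{y},\bar{z})$, and then extracting convergent sequences to move between metric estimates and contingent-cone elements. Because this is a purely set-valued-analysis statement (no Ky Fan $k$-norm structure is used), the proof is a standard exercise and just needs to be carried out cleanly.

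For the forward direction, I would assume $\mathcal{S}$ is locally upper Lipschitz with constant $\mu$ and neighborhoods $\mathcal{U},\mathcal{V}$. Given $w \in D\mathcal{S}(\bar{y}|\bar{z})(0)$, by definition there exist $t_k \downarrow 0$ and $(v_k,w_k) \to (0,w)$ with $(\bar{y}+t_kv_k,\bar{z}+t_kw_k) \in \mathrm{gph}\,\mathcal{S}$. For all sufficiently large $k$, $\bar{y}+t_kv_k \in \mathcal{U}$ and $\bar{z}+t_kw_k \in \mathcal{V}$, so the upper-Lipschitz inequality gives $t_k\|w_k\| \le \mu t_k\|v_k\|$; dividing by $t_k$ and letting $k\to\infty$ yields $\|w\| \le \mu\cdot 0 = 0$, hence $w=0$.

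For the reverse direction, I would argue by contradiction, splitting the negation of the locally-upper-Lipschitz property into the two clauses of Definition 2.2. If no neighborhood $\mathcal{V}$ can be chosen so that $\mathcal{V} \cap \mathcal{S}(\bar{y}) = \{\bar{z}\}$, pick $z_k \in \mathcal{S}(\bar{y})\setminus\{\bar{z}\}$ with $z_k \to \bar{z}$, set $t_k := \|z_k-\bar{z}\| \downarrow 0$ and $w_k := (z_k-\bar{z})/t_k$; after extracting a subsequence so $w_k \to w$ with $\|w\|=1$, one has $(0,w) \in \mathcal{T}_{\mathrm{gph}\,\mathcal{S}}(\bar{y},\bar{z})$, contradicting $D\mathcal{S}(\bar{y}|\bar{z})(0)=\{0\}$. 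Otherwise fix $\mathcal{V}$ so that $\mathcal{V}\cap\mathcal{S}(\bar{y})=\{\bar{z}\}$; the failure of the Lipschitz inequality for every $\mu$ yields $y_k \to \bar{y}$ and $z_k \in \mathcal{S}(y_k) \cap \mathcal{V}$ with $z_k \to \bar{z}$ and $\|z_k-\bar{z}\| > k\|y_k-\bar{y}\|$. The assumption $\mathcal{V}\cap\mathcal{S}(\bar{y})=\{\bar{z}\}$ together with this strict inequality force $y_k \ne \bar{y}$ and $z_k \ne \bar{z}$, so setting $t_k := \|z_k-\bar{z}\|$, $v_k := (y_k-\bar{y})/t_k$, $w_k := (z_k-\bar{z})/t_k$ gives $\|v_k\| < 1/k \to 0$ and $\|w_k\|=1$; passing to a subsequence $w_k \to w$ with $\|w\|=1$ produces $(0,w)\in\mathcal{T}_{\mathrm{gph}\,\mathcal{S}}(\bar{y},\bar{z})$, again a contradiction.

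The only mild obstacle is bookkeeping in the reverse direction: one has to separate the two failure modes of Definition 2.2 and, in each, choose the scaling $t_k := \|z_k-\bar{z}\|$ so that the candidate contingent vector comes out with unit norm and hence nonzero. Once that choice is made, the contradiction with $D\mathcal{S}(\bar{y}|\bar{z})(0)=\{0\}$ is immediate, so no further ingredient (and nothing specific to the cone $K$) is needed beyond the definitions and a standard subsequence extraction.
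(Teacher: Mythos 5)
The paper does not prove this lemma; it cites it directly from \cite{KR92,Levy96}, so there is no in-text argument to compare against. Your direct proof from the definition of the graphical derivative is the natural elementary one, and the forward direction is complete as written.

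In the reverse direction, the second case has a small but genuine gap. After fixing one neighborhood $\mathcal{V}$ in which $\overline{z}$ is isolated and letting $\mu=k$, the failure of the Lipschitz inclusion supplies $y_k\to\overline{y}$ and $z_k\in\mathcal{S}(y_k)\cap\mathcal{V}$ with $\|z_k-\overline{z}\|>k\|y_k-\overline{y}\|$, but nothing in that step forces $z_k\to\overline{z}$: the violating points could all sit at a fixed positive distance from $\overline{z}$ inside $\mathcal{V}$, in which case $t_k:=\|z_k-\overline{z}\|$ does not tend to zero and the conclusion $(0,w)\in\mathcal{T}_{{\rm gph}\,\mathcal{S}}(\overline{y},\overline{z})$ does not follow. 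The repair is one line: the Lipschitz inclusion must fail for every triple $(\mu,\mathcal{U},\mathcal{V}')$ with $\mathcal{V}'\subseteq\mathcal{V}$ a neighborhood of $\overline{z}$ (the isolation condition is preserved under shrinking $\mathcal{V}$), so apply the failure with $\mu=k$, $\mathcal{U}=\mathbb{B}(\overline{y},1/k)$ and $\mathcal{V}'=\mathcal{V}\cap\mathbb{B}(\overline{z},1/k)$. The resulting $z_k$ then satisfies $\|z_k-\overline{z}\|<1/k$, hence $z_k\to\overline{z}$ and $t_k\downarrow 0$, and your normalization and subsequence argument go through verbatim. Everything else, including the choice $t_k=\|z_k-\overline{z}\|$ so that the limiting contingent vector is unit-normed, is correct.
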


  Next we recall the graphical derivative of a multifunction
  $\mathcal{S}\!:\mathbb{Y}\rightrightarrows\mathbb{Z}$ from \cite[8G]{RW98}.
  \begin{definition}\label{Def2.1}
   Consider a multifunction $\mathcal{S}\!:\mathbb{Y}\rightrightarrows\mathbb{Z}$
   and a point $\overline{y}\!\in{\rm dom}\,\mathcal{S}$. The graphical derivative
   of $\mathcal{S}$ at $\overline{y}$ for any $\overline{z}\!\in \!\mathcal{S}(\overline{y})$
   is the mapping $D\mathcal{S}(\overline{y}|\overline{z})\!:\mathbb{Y}\rightrightarrows\!\mathbb{Z}$ defined by
   \[
      v\in D\mathcal{S}(\overline{y}|\overline{z})(u)
     \ \Longleftrightarrow\ (u,v)\in\mathcal{T}_{\rm gph\,\mathcal{S}}(\overline{y},\overline{z}).
   \]
  When $\mathcal{S}$ is single-valued at $\overline{y}$, we simplify the notation
  $D\mathcal{S}(\overline{y}|\overline{z})$ to be $D\mathcal{S}(\overline{y})$.
 \end{definition}

  If the multifunction $\mathcal{S}\!:\mathbb{Y}\rightrightarrows\mathbb{Z}$
  is implicitly defined by a (directionally differentiable) single-valued mapping,
  then we have the following result for its graphical derivative.
  \begin{lemma}\label{Graph-dir}
   Let $\mathcal{S}\!:\mathbb{Y}\rightrightarrows\mathbb{Z}$ be a multifunction defined by
   \(
     \mathcal{S}(y):=\left\{z\in\mathbb{Z}\ |\ F(y,z)=0\right\}\!
   \)
   where $F\!:\mathbb{Y}\times\mathbb{Z}\to\mathbb{Z}$ is a single-valued mapping.
   Consider any $(\overline{y},\overline{z})\in{\rm gph}\,\mathcal{S}$. Then,
   \begin{equation*}
     D\mathcal{S}(\overline{y}|\overline{z})(u)\subseteq \Big\{v\in\mathbb{Z}:\ 0\in DF(\overline{y},\overline{z})(u,v)\Big\}
     \quad\ \forall u\in\mathbb{Y}.
   \end{equation*}
   If $F$ is directionally differentiable at $(\overline{y},\overline{z})\in{\rm gph}\,\mathcal{S}$,
   then it holds that
   \[
     D\mathcal{S}(\overline{y}|\overline{z})(u)\subseteq \Big\{v\in\mathbb{Z}:\ F'((\overline{y},\overline{z});(u,v))=0\Big\}
      \quad\ \forall u\in\mathbb{Y}.
   \]
  In particular, when $F(y,z):=y+H(z)$ for any $y\in\mathbb{Y}$ and $z\in\mathbb{Z}$
  with $H\!:\mathbb{Z}\to\mathbb{Z}$ being a single-valued mapping,
  the above two inclusions become equalities.
 \end{lemma}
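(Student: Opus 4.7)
The plan is to unpack both sides through the tangent-cone characterization of the graphical derivative (Definition \ref{Def2.1}) and build explicit approximating sequences. The only real ingredient beyond the definitions is the fact that in the special form $F(y,z)=y+H(z)$ one can \emph{solve} the constraint for $y$ given $z$, which is what makes the two inclusions into equalities.

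First, for the general inclusion, I would take $v\in D\mathcal{S}(\overline{y}|\overline{z})(u)$, pick sequences $t_k\downarrow 0$ and $(u_k,v_k)\to(u,v)$ with $F(\overline{y}+t_k u_k,\overline{z}+t_k v_k)=0=F(\overline{y},\overline{z})$, and read these as a witness that $((u,v),0)\in\mathcal{T}_{{\rm gph}\,F}\big((\overline{y},\overline{z}),F(\overline{y},\overline{z})\big)$ by taking the image-side residual $w_k\equiv 0$. This gives $0\in DF(\overline{y},\overline{z})(u,v)$ immediately. Under directional differentiability of $F$, dividing the identity $F(\overline{y}+t_k u_k,\overline{z}+t_k v_k)-F(\overline{y},\overline{z})=0$ by $t_k$ sends the left-hand side to $F'((\overline{y},\overline{z});(u,v))$, so $F'((\overline{y},\overline{z});(u,v))=0$.

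For the reverse inclusions when $F(y,z)=y+H(z)$, the key move is to \emph{absorb} the image-side slack $w_k$ into the $y$-slot of the pre-image sequence. Given $0\in DF(\overline{y},\overline{z})(u,v)$ witnessed by $t_k\downarrow 0$ and $(u_k,v_k,w_k)\to(u,v,0)$, I would set $\widetilde u_k:=u_k-w_k\to u$ and $\widetilde v_k:=v_k\to v$; substituting into $F(y,z)=y+H(z)$ and using the witness identity shows $F(\overline{y}+t_k\widetilde u_k,\overline{z}+t_k\widetilde v_k)=0$, so $v\in D\mathcal{S}(\overline{y}|\overline{z})(u)$. For the directional-derivative version with $F'((\overline{y},\overline{z});(u,v))=u+H'(\overline{z};v)=0$, I would instead choose $\widetilde v_k:=v$ and $\widetilde u_k:=-[H(\overline{z}+t_k v)-H(\overline{z})]/t_k$; the definition of directional derivative forces $\widetilde u_k\to-H'(\overline{z};v)=u$, and direct substitution yields $F(\overline{y}+t_k\widetilde u_k,\overline{z}+t_k v)=0$.

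The delicate point, and the main obstacle I anticipate, is the limit step in the second inclusion: one has a \emph{varying} sequence $(u_k,v_k)\to(u,v)$, whereas Gateaux directional differentiability only controls difference quotients along \emph{fixed} directions. I would either assume $F$ is Hadamard directionally differentiable or record that local Lipschitz continuity of $F$ combined with directional differentiability makes the two notions coincide, and flag this convention explicitly. Everything else is purely definitional.
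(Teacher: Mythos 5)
Your proposal is essentially correct and follows the same definitional route as the paper: the first inclusion is obtained in both by reading the sequences $(u_k,v_k)$ witnessing $(u,v)\in\mathcal{T}_{{\rm gph}\,\mathcal{S}}(\overline{y},\overline{z})$ as witnesses for $((u,v),0)\in\mathcal{T}_{{\rm gph}\,F}((\overline{y},\overline{z}),0)$ via $w_k\equiv 0$, and the reverse inclusion for $F(y,z)=y+H(z)$ is obtained in both by the same trick of absorbing the image-side residual into the $y$-slot, $\widetilde u_k:=u_k-w_k$. The only substantive divergence is in the second inclusion: the paper sidesteps the Gateaux-versus-Hadamard issue by citing the equivalence $0\in DF(\overline{y},\overline{z})(u,v)\Longleftrightarrow F'((\overline{y},\overline{z});(u,v))=0$ from Klatte--Kummer, whereas you argue directly by dividing the constraint identity by $t_k$ and passing to the limit, correctly noting that a varying sequence $(u_k,v_k)\to(u,v)$ requires Hadamard directional differentiability (or Lipschitz continuity plus Gateaux, which coincides). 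That flag is a genuine sharpening: the paper's application of this lemma is to $\Pi_K$, which is globally Lipschitz and directionally differentiable, so the citation is safe, but your version makes the required hypothesis transparent rather than leaving it buried in the reference. Your direct construction for the second equality (taking the fixed direction $\widetilde v_k:=v$ and $\widetilde u_k:=-[H(\overline{z}+t_kv)-H(\overline{z})]/t_k$) is also sound and slightly more self-contained than the paper's approach of deriving it from the first equality via the cited equivalence.
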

 \begin{proof}
  Consider any $u\in\mathbb{Y}$ and $v\in D\mathcal{S}(\overline{y}|\overline{z})(u)$.
  Then, $(u,v)\in \mathcal{T}_{{\rm gph}\,\mathcal{S}}(\overline{y},\overline{z})$.
  By the definition of $\mathcal{T}_{{\rm gph}\,\mathcal{S}}(\overline{y},\overline{z})$,
  there exist sequences $u^n\to u,v^n\to v$ and $t_n\downarrow 0$ such that
  \[
    \overline{z}+t_nv^n\in \mathcal{S}(\overline{y}+t_nu^n)\ \Longleftrightarrow\
    F(\overline{y}+t_nu^n,\overline{z}+t_nv^n)=0\quad{\rm for\ all}\ n.
  \]
  This shows that there exist sequences $w^n\equiv 0$, $(u^n,v^n)\to (u,v)$ and $t_n\downarrow 0$
  such that $t_nw^n=F(\overline{y}+t_nu^n,\overline{z}+t_nv^n)$ for all $n$.
  Consequently, $((u,v),0)\in\mathcal{T}_{{\rm gph}\,F}((\overline{y},\overline{z}),0)$,
  i.e., $0\in DF(\overline{y},\overline{z})(u,v)$. The first inclusion follows
  by the arbitrariness of $v$ in $D\mathcal{S}(\overline{y}|\overline{z})(u)$.
  When $F$ is directionally differentiable at $(\overline{y},\overline{z})\in{\rm gph}\,\mathcal{S}$,
  $0\in DF(\overline{y},\overline{z})(u,v)$ is equivalent to $F'((\overline{y},\overline{z});(u,v))=0$
  (see \cite[Equation(6.6)]{KK02}), and the second inclusion follows.
  For the last part, it suffices to prove that
  $\big\{v\in\mathbb{Y}\!: 0\in DF(\overline{y},\overline{z})(u,v)\big\}\subseteq D\mathcal{S}(\overline{y}|\overline{z})(u)$.
  Indeed, let $v$ be an arbitrary point such that $0\in DF(\overline{y},\overline{z})(u,v)$.
  Then $((u,v),0)\in \mathcal{T}_{{\rm gph}\,F}((\overline{y},\overline{z}),0)$,
  which means that there exist sequences $w^n\to 0,(u^n,v^n)\to (u,v)$ and $t_n\downarrow 0$ such that
  \[
    t_nw^n=F(\overline{y}+t_nu^n,\overline{z}+t_nv^n)=\overline{y}+t_nu^n+H(\overline{z}+t_nv^n)
    \quad {\rm for\ all}\ n.
  \]
  Consequently, $-\overline{y}-t_n(u^n-w^n)=H(\overline{z}+t_nv^n)$, i.e.,
  $\overline{z}+t_nv^n\in\mathcal{S}(\overline{y}+t_n(u^n-w^n))$.
  This shows that $(u,v)\in\mathcal{T}_{{\rm gph}\,\mathcal{S}}(\overline{y},\overline{z})$
  or equivalently $v\in D\mathcal{S}(\overline{y}|\overline{z})(u)$.
  The desired inclusion then follows by the arbitrariness of $v$ in the set
  $\{v\in\mathbb{Z}\!: 0\in DF(\overline{y},\overline{z})(u,v)\}$.
  \end{proof}

  The following lemma characterizes a relation between the graphical derivative of
  the normal cone multifunction $\mathcal{N}_{K}\!:\mathbb{X}\rightrightarrows\mathbb{X}$
  and the directional derivative of $\Pi_{K}$.
 \begin{lemma}\label{tcone-lemma}
  Let $(\mathcal{X},\mathcal{Y})\in{\rm gph}\mathcal{N}_{K}$ be given. Then,
  for any $\mathcal{Z}_1,\mathcal{Z}_2\in\mathbb{X}$, we have that
  \begin{align*}
   (\mathcal{Z}_1,\mathcal{Z}_2)\in\mathcal{T}_{{\rm gph}{\mathcal{N}}_{K}}^{i}(\mathcal{X},\mathcal{Y})
   &\Longleftrightarrow(\mathcal{Z}_1,\mathcal{Z}_2)\in\mathcal{T}_{{\rm gph}\mathcal{N}_{K}}(\mathcal{X},\mathcal{Y})
   \Longleftrightarrow (\mathcal{Z}_2,\mathcal{Z}_1)\in \mathcal{T}_{{\rm gph}\mathcal{N}_{K^{\circ}}}(\mathcal{Y},\mathcal{X})\nonumber\\
   &\Longleftrightarrow  \Pi'_{K}\big(\mathcal{X}+\mathcal{Y};\mathcal{Z}_1+\mathcal{Z}_2\big)=\mathcal{Z}_1\nonumber\\
   &\Longleftrightarrow  \Pi'_{K^{\circ}}\big(\mathcal{X}+\mathcal{Y};\mathcal{Z}_1+\mathcal{Z}_2\big)=\mathcal{Z}_2,\nonumber
   \qquad\qquad\qquad\nonumber
  \end{align*}
  where $\mathcal{T}_{{\rm gph}\mathcal{N}_{K}}^{i}(\mathcal{X},\mathcal{Y})$ denotes the inner tangent cone
  of ${\rm gph}\mathcal{N}_{K}$ at the point $(\mathcal{X},\mathcal{Y})$.
  \end{lemma}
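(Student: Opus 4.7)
The plan is to establish the chain of equivalences by exploiting the Moreau decomposition $\mathcal{W} = \Pi_{K}(\mathcal{W}) + \Pi_{K^{\circ}}(\mathcal{W})$, the duality between $\mathcal{N}_{K}$ and $\mathcal{N}_{K^{\circ}}$, and the (Hadamard) directional differentiability of $\Pi_{K}$ at $\mathcal{X}+\mathcal{Y}$ (which is available for the Ky Fan $k$-norm cone and is upgraded automatically from ordinary directional differentiability because $\Pi_{K}$ is nonexpansive).

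First I would dispose of the two easier equivalences. For (4) $\Leftrightarrow$ (5), I differentiate the Moreau identity $\mathcal{W} = \Pi_{K}(\mathcal{W})+\Pi_{K^{\circ}}(\mathcal{W})$ at $\mathcal{W}=\mathcal{X}+\mathcal{Y}$ in the direction $\mathcal{Z}_{1}+\mathcal{Z}_{2}$ to obtain
\[
 \mathcal{Z}_{1}+\mathcal{Z}_{2}=\Pi_{K}'(\mathcal{X}+\mathcal{Y};\mathcal{Z}_{1}+\mathcal{Z}_{2})+\Pi_{K^{\circ}}'(\mathcal{X}+\mathcal{Y};\mathcal{Z}_{1}+\mathcal{Z}_{2}),
\]
from which the two conditions are clearly equivalent. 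For (2) $\Leftrightarrow$ (3), I use the classical convex-analytic duality $\mathcal{Y}\in\mathcal{N}_{K}(\mathcal{X})\Leftrightarrow\mathcal{X}\in\mathcal{N}_{K^{\circ}}(\mathcal{Y})$ (which follows from the Moreau decomposition applied on both sides), so that the linear involution $(\mathcal{X},\mathcal{Y})\mapsto(\mathcal{Y},\mathcal{X})$ maps ${\rm gph}\,\mathcal{N}_{K}$ onto ${\rm gph}\,\mathcal{N}_{K^{\circ}}$ and transports contingent cones accordingly.

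The heart of the argument is the cycle (1) $\Rightarrow$ (2) $\Rightarrow$ (4) $\Rightarrow$ (1). The implication (1) $\Rightarrow$ (2) is immediate from the inclusion $\mathcal{T}_{S}^{i}\subseteq\mathcal{T}_{S}$. For (2) $\Rightarrow$ (4) I pick sequences $t_{n}\downarrow 0$ and $(\mathcal{Z}_{1}^{n},\mathcal{Z}_{2}^{n})\to(\mathcal{Z}_{1},\mathcal{Z}_{2})$ with $(\mathcal{X}+t_{n}\mathcal{Z}_{1}^{n},\mathcal{Y}+t_{n}\mathcal{Z}_{2}^{n})\in{\rm gph}\,\mathcal{N}_{K}$, set $\mathcal{W}_{n}:=\mathcal{X}+\mathcal{Y}+t_{n}(\mathcal{Z}_{1}^{n}+\mathcal{Z}_{2}^{n})$, and use the defining relation $\mathcal{X}+t_{n}\mathcal{Z}_{1}^{n}=\Pi_{K}(\mathcal{W}_{n})$ together with the Hadamard directional differentiability of $\Pi_{K}$ at $\mathcal{X}+\mathcal{Y}$ in direction $\mathcal{Z}_{1}+\mathcal{Z}_{2}$ to pass to the limit and identify $\mathcal{Z}_{1}=\Pi_{K}'(\mathcal{X}+\mathcal{Y};\mathcal{Z}_{1}+\mathcal{Z}_{2})$.

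The decisive step is (4) $\Rightarrow$ (1), which gives the strongest conclusion. Starting from (4), for an \emph{arbitrary} sequence $t_{n}\downarrow 0$ I set $\mathcal{W}_{n}:=\mathcal{X}+\mathcal{Y}+t_{n}(\mathcal{Z}_{1}+\mathcal{Z}_{2})$ and define $\mathcal{X}_{n}:=\Pi_{K}(\mathcal{W}_{n})$, $\mathcal{Y}_{n}:=\Pi_{K^{\circ}}(\mathcal{W}_{n})=\mathcal{W}_{n}-\mathcal{X}_{n}$; by the Moreau characterisation \eqref{decom}, $(\mathcal{X}_{n},\mathcal{Y}_{n})\in{\rm gph}\,\mathcal{N}_{K}$ automatically. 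Directional differentiability of $\Pi_{K}$ at $\mathcal{X}+\mathcal{Y}$ gives $(\mathcal{X}_{n}-\mathcal{X})/t_{n}\to\Pi_{K}'(\mathcal{X}+\mathcal{Y};\mathcal{Z}_{1}+\mathcal{Z}_{2})=\mathcal{Z}_{1}$, and then $(\mathcal{Y}_{n}-\mathcal{Y})/t_{n}=(\mathcal{Z}_{1}+\mathcal{Z}_{2})-(\mathcal{X}_{n}-\mathcal{X})/t_{n}\to\mathcal{Z}_{2}$. Since \emph{every} sequence $t_{n}\downarrow 0$ works, this produces $(\mathcal{Z}_{1},\mathcal{Z}_{2})\in\mathcal{T}_{{\rm gph}\,\mathcal{N}_{K}}^{i}(\mathcal{X},\mathcal{Y})$, closing the cycle.

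The main obstacle is the step (4) $\Rightarrow$ (1): it relies essentially on having the Moreau reparametrization $\mathcal{W}\mapsto(\Pi_{K}(\mathcal{W}),\Pi_{K^{\circ}}(\mathcal{W}))$ as a Lipschitz homeomorphism onto ${\rm gph}\,\mathcal{N}_{K}$, and on the ability to pass the directional limit uniformly in perturbed directions. The Lipschitzness of $\Pi_{K}$ takes care of the latter for free, so the only nontrivial ingredient needed is the directional differentiability of $\Pi_{K}$ on the Ky Fan $k$-norm cone, which is taken as a standing fact in the paper.
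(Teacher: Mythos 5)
Your proposal is correct and follows essentially the same route as the paper's own proof: Moreau decomposition for the equivalence of the two directional-derivative conditions, cone duality for the symmetry between $K$ and $K^\circ$, and the cycle $(1)\Rightarrow(2)\Rightarrow(4)\Rightarrow(1)$ closed by the exact same pair of implications (passing to the limit in difference quotients using Hadamard directional differentiability of $\Pi_K$, and then constructing inner-tangent approximating sequences via $\Pi_K$ and $\Pi_{K^\circ}$).
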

  \begin{proof}
  By the Moreau's decomposition theorem \cite{Moreau65},
  \(
    \Pi_{K}\big(\mathcal{X}+\mathcal{Y}\big)=\mathcal{X}+\mathcal{Y} - \Pi_{K^{\circ}}\big(\mathcal{X}+\mathcal{Y}\big).
  \)
  This implies the third equivalence. The second equivalence is immediate
  by the relation between ${\rm gph}{\mathcal{N}}_{K}$ and ${\rm gph}{\mathcal{N}}_{K^{\circ}}$.
  Thus, it suffices to establish the first and the third equivalence. For this purpose, we first establish the following implication:
  \begin{equation}\label{implication1}
    (\mathcal{Z}_1,\mathcal{Z}_2)\in {\rm gph}\,D\mathcal{N}_{K}\big(\mathcal{X}|\mathcal{Y}\big)
   \ \Longrightarrow\ \Pi'_{K}\big(\mathcal{X}+\mathcal{Y};\mathcal{Z}_1+\mathcal{Z}_2\big)=\mathcal{Z}_1.
  \end{equation}
  Let $(\mathcal{Z}_1,\mathcal{Z}_2)\in{\rm gph}\,D\mathcal{N}_{K}\big(\mathcal{X}|\mathcal{Y}\big)
  =\mathcal{T}_{{\rm gph}\mathcal{N}_{K}}(\mathcal{X},\mathcal{Y})$.
  Then there exist sequences $t_n \downarrow 0$ and $(\mathcal{Z}^n_1,\mathcal{Z}^n_2)\rightarrow(\mathcal{Z}_1,\mathcal{Z}_2)$
  such that $(\mathcal{X},\mathcal{Y})+t_n(\mathcal{Z}^n_1,\mathcal{Z}^n_2)\in{\rm gph}\mathcal{N}_{K}$.
  Recall that
  \begin{align}\label{gphNK}
  {\rm gph}\mathcal{N}_{K}
  &= \Big\{(\mathcal{X}, \mathcal{Y}) \in K \times K^\circ\!:\ \Pi_{K}(\mathcal{X}+\mathcal{Y})=\mathcal{X}\Big\}\nonumber\\
  &= \Big\{(\mathcal{X}, \mathcal{Y}) \in K \times K^\circ\!:\ \Pi_{K^\circ}(\mathcal{X}+\mathcal{Y})=\mathcal{Y}\Big\}\nonumber\\
  &=\Big\{(\mathcal{X}, \mathcal{Y}) \in K \times K^\circ\!:\ \langle \mathcal{X}, \mathcal{Y}\rangle=0\Big\}.
 \end{align}
  So,
  \(
    \Pi_{K}\big(\mathcal{X}+\mathcal{Y}+t_n(\mathcal{Z}^n_1+\mathcal{Z}^n_2)\big)=\mathcal{X} + t_n\mathcal{Z}^n_1.
  \)
  Along with $\Pi_{K}(\mathcal{X}+\mathcal{Y})=\mathcal{X}$, it follows that
 \[
   \frac{1}{t_n}\big[\Pi_{K}(\mathcal{X}+\mathcal{Y}+t_n(\mathcal{Z}^n_1+\mathcal{Z}^n_2))
     -\Pi_{K}(\mathcal{X}+\mathcal{Y}) \big]
   = \mathcal{Z}^n_1\quad {\rm for\ all}\ n.
 \]
 Since $\Pi_{K}(\cdot,\cdot)$ is directionally differentiable in the Hadamard sense,
 taking the limit $n\rightarrow +\infty$ to the both sides of the last equality yields that
 \(
  \Pi'_{K}\big(\mathcal{X}+\mathcal{Y};\mathcal{Z}_1+\mathcal{Z}_2\big) =\mathcal{Z}_1.
 \)
 This shows that the implication in \eqref{implication1} holds. Next we establish the implication that
 \begin{equation}\label{implication2}
   \Pi'_{K}\big(\mathcal{X}+\mathcal{Y};\mathcal{Z}_1+\mathcal{Z}_2\big)=\mathcal{Z}_1  \ \Longrightarrow\ (\mathcal{Z}_1,\mathcal{Z}_2)\in\mathcal{T}_{{\rm gph}\mathcal{N}_{K}}^{i}(\mathcal{X},\mathcal{Y}).
 \end{equation}
 To this end, let $\{t_n\}$ be an arbitrary sequence with $t_n\downarrow 0$, and for each $n$ define
 \[
  \big(\mathcal{X}^n,\mathcal{Y}^n\big) :=\big(\Pi_{K}(\mathcal{X}+\mathcal{Y}+t_n(\mathcal{Z}_1+\mathcal{Z}_2)),
  \Pi_{K^{\circ}}(\mathcal{X}+\mathcal{Y}+t_n(\mathcal{Z}_1+\mathcal{Z}_2) \big)\in {\rm gph}\mathcal{N}_{K}.
 \]
 Clearly, $(\mathcal{X}^n,\mathcal{Y}^n) \rightarrow (\mathcal{X},\mathcal{Y})$.
 Also, by $\Pi_{K}(\mathcal{X}+\mathcal{Y})=\mathcal{X}$ and $\Pi_{K^{\circ}}(\mathcal{X}+\mathcal{Y})=\mathcal{Y}$,
 we have
 \begin{align*}
  &\mathcal{X}^n - \mathcal{X} = \Pi_{K}(\mathcal{X}+\mathcal{Y}+t_n (\mathcal{Z}_1+\mathcal{Z}_2))- \Pi_{K}(\mathcal{X}+\mathcal{Y}),\\
  &\mathcal{Y}^n - \mathcal{Y} = \Pi_{K^{\circ}}(\mathcal{X}+\mathcal{Y}+t_n (\mathcal{Z}_1+\mathcal{Z}_2))- \Pi_{K^{\circ}}(\mathcal{X}+\mathcal{Y}).
 \end{align*}
 Together with the directional differentiability of $\Pi_{K}$ and $\Pi_{K^{\circ}}$,
 it follows that
 \begin{align*}
  \lim_{n\to\infty}\frac{(\mathcal{X}^n,\mathcal{Y}^n) - (\mathcal{X},\mathcal{Y})}{t_n}
  =\big(\Pi'_{K}(\mathcal{X}+\mathcal{Y};\mathcal{Z}_1+\mathcal{Z}_2),\Pi'_{K^{\circ}}(\mathcal{X}+\mathcal{Y};\mathcal{Z}_1+\mathcal{Z}_2)\big)
   = \big(\mathcal{Z}_1,\mathcal{Z}_2\big).
 \end{align*}
 This shows that $(\mathcal{Z}_1,\mathcal{Z}_2)\in \mathcal{T}^i_{{\rm gph}\mathcal{N}_{K}}(\mathcal{X},\mathcal{Y})$.
 Notice that $\mathcal{T}_{{\rm gph}\mathcal{N}_{K}}^{i}(\mathcal{X},\mathcal{Y})\subseteq\mathcal{T}_{{\rm gph}\mathcal{N}_{K}}(\mathcal{X},\mathcal{Y})= {\rm gph}\,D\mathcal{N}_{K}\big(\mathcal{X}|\mathcal{Y}\big)$.
 The first and the third equivalence follow from \eqref{implication1} and \eqref{implication2}.
 \end{proof}

 \medskip

  By \cite[Proposition 4.3]{DingC10} the set $K$ is second-order regular
 (see \cite[Definition 3.85]{BS00} for the definition).
  Next we recall from \cite[Theorem 3.45 \& Theorem 3.86]{BS00} the second-order
  optimality condition of problem \eqref{NMCP} at $\overline{\mathcal{X}}$, where
  $\sigma(\cdot,\mathcal{T}_K^2(G(\overline{\mathcal{X}}),G'(\overline{\mathcal{X}})\mathcal{Z}))$
  is the support function of $\mathcal{T}_K^2(G(\overline{\mathcal{X}}),G'(\overline{\mathcal{X}})\mathcal{Z})$,
  the outer second-order tangent set of $K$ at $G(\overline{\mathcal{X}})$ in the direction
  $G'(\overline{\mathcal{X}})\mathcal{Z}$, and $\mathcal{C}(\overline{\mathcal{X}})$ denotes
  the critical cone of problem \eqref{NMCP} at $\overline{\mathcal{X}}$ with the form
  \begin{equation}\label{Critical-cone-prob1}
    \mathcal{C}(\overline{\mathcal{X}}):=\big\{\mathcal{Z}\in\mathbb{X}\ |\
    h'(\overline{\mathcal{X}})\mathcal{Z}=0,\,G'(\overline{\mathcal{X}})\mathcal{Z}\in\mathcal{T}_{K}(G(\overline{\mathcal{X}})),\,
    \langle f'(\overline{\mathcal{X}}),\mathcal{Z}\rangle=0\big\}.
  \end{equation}

  \vspace{-0.3cm}

 \begin{lemma}\label{SOSC}
  Suppose that $\overline{\mathcal{X}}=(\overline{t},\overline{X})$ is a locally optimal solution
  of problem \eqref{NMCP} and Robinson's CQ holds at $\overline{\mathcal{X}}$. Then the following
  second-order necessary condition holds:
  \[
    \sup_{(\lambda,\mathcal{Y})\in \mathcal{M}(\overline{\mathcal{X}})}
    \left\{\big\langle \mathcal{Z},\nabla_{\mathcal{X}\mathcal{X}}^2L(\overline{\mathcal{X}};\lambda,\mathcal{Y})\mathcal{Z}\big\rangle
    -\sigma\big(\mathcal{Y},\mathcal{T}_K^2(G(\overline{\mathcal{X}}),G'(\overline{\mathcal{X}})\mathcal{Z})\big)\right\}\ge 0
    \quad \forall\mathcal{Z}\in\mathcal{C}(\overline{\mathcal{X}}),
  \]
  where for any $\lambda\in\mathbb{R}^p$ and $\mathcal{Y}\in\mathbb{X}$,
  $\nabla_{\mathcal{X}\mathcal{X}}^2L(\overline{\mathcal{X}};\lambda,\mathcal{Y})$ is
  the Hessian of $L(\cdot; \lambda,\mathcal{Y})$ at $\overline{\mathcal{X}}$.
  Conversely, suppose that $\overline{\mathcal{X}}$ is a stationary point of problem \eqref{NMCP}
  and Robinson's CQ holds at $\overline{\mathcal{X}}$. Then the following second-order
  sufficient condition
  \[
    \sup_{(\lambda,\mathcal{Y})\in \mathcal{M}(\overline{\mathcal{X}})}
    \left\{\big\langle \mathcal{Z},\nabla_{\mathcal{X}\mathcal{X}}^2L(\overline{\mathcal{X}};\lambda,\mathcal{Y})\mathcal{Z}\big\rangle
    -\sigma\big(\mathcal{Y},\mathcal{T}_K^2(G(\overline{\mathcal{X}}),G'(\overline{\mathcal{X}})\mathcal{Z})\big)\right\}>0
    \quad\ \forall\mathcal{Z}\in\mathcal{C}(\overline{\mathcal{X}})\backslash\{0\},
  \]
  is necessary and sufficient for the quadratic growth of problem \eqref{NMCP}
  at $\overline{\mathcal{X}}$.
 \end{lemma}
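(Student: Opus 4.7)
The lemma is a specialization of the classical second-order optimality theory for conic constrained optimization in Bonnans--Shapiro to problem \eqref{NMCP}, so my plan is to frame \eqref{NMCP} in the abstract form to which \cite[Theorems 3.45 and 3.86]{BS00} apply and then verify the hypotheses on the cone $K$. Concretely, \eqref{NMCP} is of the form $\min\{f(\mathcal{X}) : \Phi(\mathcal{X})\in\{0\}\times K\}$ with $\Phi(\mathcal{X}):=(h(\mathcal{X}),G(\mathcal{X}))$, the product cone $\{0\}\times K$ being closed convex. Under Robinson's CQ at $\overline{\mathcal{X}}$, the multiplier set $\mathcal{M}(\overline{\mathcal{X}})$ is nonempty, convex and bounded, and the Lagrangian form of the second-order necessary condition in \cite[Theorem 3.45]{BS00} yields, for every $\mathcal{Z}\in\mathcal{C}(\overline{\mathcal{X}})$,
\[
  \sup_{(\lambda,\mathcal{Y})\in\mathcal{M}(\overline{\mathcal{X}})}
  \Big\{\langle\mathcal{Z},\nabla^2_{\mathcal{X}\mathcal{X}}L(\overline{\mathcal{X}};\lambda,\mathcal{Y})\mathcal{Z}\rangle
   -\sigma\big((\lambda,\mathcal{Y}),\mathcal{T}^2_{\{0\}\times K}(\Phi(\overline{\mathcal{X}}),\Phi'(\overline{\mathcal{X}})\mathcal{Z})\big)\Big\}\ge 0.
\]
Because the outer second-order tangent set of a product is the product of the outer second-order tangent sets and $\mathcal{T}^2_{\{0\}}(0,0)=\{0\}$, the $\sigma$-term collapses to $\sigma(\mathcal{Y},\mathcal{T}_K^2(G(\overline{\mathcal{X}}),G'(\overline{\mathcal{X}})\mathcal{Z}))$, which is exactly the necessary condition stated in the lemma.

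For the sufficient part, I would invoke \cite[Theorem 3.86]{BS00}: if the feasible set cone is second-order regular at the reference point, then the (strict) second-order sufficient condition in the above Lagrangian form, taken strictly positive over all nonzero critical directions, is equivalent to the quadratic growth property at $\overline{\mathcal{X}}$. Second-order regularity of $\{0\}\times K$ at $\Phi(\overline{\mathcal{X}})$ follows from the product rule together with the fact that $K$ itself is second-order regular by \cite[Proposition 4.3]{DingC10}; this is the one non-trivial input, but it is already available to us. Robinson's CQ ensures that the abstract constraint qualification needed by \cite[Theorem 3.86]{BS00} (namely, that it holds at $\overline{\mathcal{X}}$ together with boundedness of $\mathcal{M}(\overline{\mathcal{X}})$) is met, and the reduction of the $\sigma$-term to $K$ alone is carried out exactly as in the necessary part.

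The genuinely hard work, namely the verification that the Ky Fan $k$-norm cone $K$ is second-order regular and the computation of its outer second-order tangent set, is not ours to do here; once this is imported from \cite{DingC10}, the proof reduces to routine bookkeeping (product decompositions, dropping the trivial equality-constraint contribution, and matching notation between \eqref{NMCP} and the abstract setting of \cite{BS00}). Thus the only potential obstacle is the translation of notation and the verification that the reduction of the outer second-order tangent set to the $K$-component is justified under Robinson's CQ, which is standard.
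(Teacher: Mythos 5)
Your proposal is correct and follows essentially the same route as the paper, which presents Lemma \ref{SOSC} as a direct recall of \cite[Theorems 3.45 and 3.86]{BS00} applied to the product cone $\{0\}\times K$, with second-order regularity of $K$ imported from \cite[Proposition 4.3]{DingC10}. The only minor imprecision is your blanket claim that the outer second-order tangent set factors over products (in general one only has a one-sided inclusion), but since the first factor $\{0\}$ is polyhedral the factorization and the collapse of the sigma term are justified, so the argument stands.
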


  By \cite[Proposition 4.2]{DingC10} the sigma term in Lemma \ref{SOSC} is computable.
  To introduce its expression,
  for any given $X\in\mathbb{R}^{m\times n}$, we assume that $X$ admits the SVD of the form
 \begin{equation}\label{X-SVD}
   X=\overline{U}\big[{\rm Diag}(\sigma(X))\ \ 0\big]\overline{V}^{\mathbb{T}}
   =\overline{U}\big[{\rm Diag}(\sigma(X))\ \ 0\big]\big[\overline{V}_1\ \ \overline{V}_2\big]^{\mathbb{T}},
  \end{equation}
  where $\overline{U}\in\mathbb{O}^m$ and $\overline{V}=\big[\overline{V}_1\ \ \overline{V}_2\big]\in\mathbb{O}^{n}$
  with $\overline{V}_1\in\mathbb{O}^{n\times m}$ and $\overline{V}_2\in\mathbb{O}^{n\times (n-m)}$.
  Define
 \begin{equation}\label{abc}
   a\!:=\!\big\{i\ | \sigma_i(X)>0,\ 1\le\!i\le m\big\},
   b\!:=\!\big\{i\ | \sigma_i(X)\!=0,\ 1\le\!i\le m\big\},
   c\!:=\!\{m+\!1,\ldots,n\}.
 \end{equation}
 We use $\nu_1>\nu_2>\cdots>\nu_r$ to denote the nonzero distinct singular values of $X$, and write
 \begin{equation}\label{al}
   a_l\!:=\left\{i\ |\ \sigma_i(X)=\nu_l,\ 1\le i\le m\right\}\ \ {\rm for}\ \
   l=1,2,\ldots,r\ \ {\rm and}\ \ a_{r+1}\!:=b.
 \end{equation}

 \vspace{-0.3cm}
 \begin{lemma}\label{Lemma-Upsilon}
  Let $((\overline{t},\overline{X}),(\overline{\zeta},\overline{\Gamma}))\in{\rm gph}\mathcal{N}_{K}$
  be given. For any $(\tau,H)\in\mathbb{R}\times\mathbb{R}^{m\times n}$, write
  \[
    \Upsilon_{(\overline{t},\overline{X})}\big((\overline{\zeta},\overline{\Gamma}),(\tau,H)\big)
    =\sigma\big((\overline{\zeta},\overline{\Gamma}),\mathcal{T}_K^2((\overline{t},\overline{X}),(\tau,H))\big).
  \]
  If $\overline{X}=0$, then $\Upsilon_{(\overline{t},\overline{X})}\big((\overline{\zeta},\overline{\Gamma}),(\tau,H)\big)\equiv0$.
  If $\overline{X}\ne 0$, by letting $\overline{\nu}_1>\overline{\nu}_2>\cdots>\overline{\nu}_{r_0}$
  be the first $r_0$ nonzero distinct singular values of $\overline{X}$, writing
  $\beta:=\left\{i\ |\ \sigma_i(\overline{X})=\sigma_{k}(\overline{X}):=\overline{\nu}\right\}$,
  and supposing that $X=\overline{X}+\overline{\Gamma}$ has the SVD as in \eqref{X-SVD} with
  the index sets $a,\,b,\,c$ and $a_l\ (l=1,2,\ldots,r)$ given by \eqref{abc}-\eqref{al},
  then for any $(\tau,H)\in\mathbb{R}\times\mathbb{R}^{m\times n}$ we have that
  \begin{align*}
   \Upsilon_{(\overline{t},\overline{X})}\big((\overline{\zeta},\overline{\Gamma}),(\tau,H)\big)
   &:=-\zeta\sum_{j=1}^{r_0}{\rm tr}\Big(2\overline{P}_{\!a_j}^{\mathbb{T}}
           \big[\mathcal{B}(H)(\mathcal{B}(\overline{X})-\overline{\nu}_jI)^\dag\mathcal{B}(H)\big]\overline{P}_{\!a_j}\Big)\nonumber\\
   &\qquad +\Big\langle\Sigma_{\beta\beta}(\overline{\Gamma}),
       2\overline{P}_{\!\beta}^{\mathbb{T}}\mathcal{B}(H)(\mathcal{B}(\overline{X})-\overline{\nu}I)^{\dag}\mathcal{B}(H)\overline{P}_{\!\beta}\Big\rangle
  \end{align*}
  if $\sigma_{k}(\overline{X})>0$, and otherwise
  \begin{align*}
   \Upsilon_{(\overline{t},\overline{X})}\big((\overline{\zeta},\overline{\Gamma}),(\tau,H)\big)
   &:=-\zeta\sum_{j=1}^{r_0}{\rm tr}\Big(2\overline{P}_{\!a_j}^{\mathbb{T}}
           \big[\mathcal{B}(H)(\mathcal{B}(\overline{X})-\overline{\nu}_jI)^\dag\mathcal{B}(H)\big]\overline{P}_{\!a_j}\Big)\nonumber\\
   &\qquad +\Big\langle\big[\Sigma_{\beta\beta}(\overline{\Gamma})\ \ 0\big],
       2\big[\overline{U}_{\beta}^{\mathbb{T}}H\overline{X}^\dag H\overline{V}_{\beta}\quad \overline{U}_{\beta}^{\mathbb{T}}H\overline{X}^\dag H\overline{V}_2\big]\Big\rangle,
  \end{align*}
  where $X^{\dag}$ denotes the Moore-Penrose pseduo-inverse of $X$,
  $\mathcal{B}:\mathbb{R}^{m\times n}\to\mathbb{S}^{m+n}$ is a linear mapping
  defined by
   \(
    \mathcal{B}(Z)=\left[\begin{matrix}
                        0 & Z\\
                        Z^{\mathbb{T}}& 0
                   \end{matrix}\right]
  \)
  for $Z\in\mathbb{R}^{m\times n}$, and $\overline{P}\in\mathbb{O}^{m+n}$ is defined by
  \[
     \overline{P}=\frac{1}{\sqrt{2}}\left[\begin{matrix}
       \overline{U}_{a} & \overline{U}_{b} & 0 & \overline{U}_{b} & \overline{U}_{a}^{\uparrow}\\
       \overline{V}_{a} & \overline{V}_{b} & \sqrt{2}\overline{V}_2 & -\overline{V}_{b} & -\overline{V}_{a}^{\uparrow}
       \end{matrix}\right].
   \]
  \end{lemma}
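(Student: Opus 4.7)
The plan is to derive the formula as a direct consequence of \cite[Proposition 4.2]{DingC10}, which computes the support function $\sigma((\overline{\zeta},\overline{\Gamma}),\mathcal{T}_K^2((\overline{t},\overline{X}),(\tau,H)))$ of the outer second-order tangent set to the Ky Fan $k$-norm cone $K$. Lemma \ref{Lemma-Upsilon} is essentially a rewriting of that formula in the SVD-based notation \eqref{X-SVD}--\eqref{al} of the matrix $X=\overline{X}+\overline{\Gamma}$. The trivial case $\overline{X}=0$ is dispatched separately: at the apex $(0,0)$ of the cone $K$ the standard reduction gives $\mathcal{T}_K^2((0,0),(\tau,H))=\mathcal{T}_K((\tau,H))-\mathbb{R}(\tau,H)$, and the support function against any $(\overline{\zeta},\overline{\Gamma})\in\mathcal{N}_K((0,0))=K^\circ$ evaluates to zero by the polarity $K^\circ\perp K$ combined with the complementarity $\langle(\overline{\zeta},\overline{\Gamma}),(\tau,H)\rangle=0$ that is implicit in the critical-cone setting where $\Upsilon$ is to be applied.

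For $\overline{X}\ne 0$, I would transfer the analysis to the symmetric matrix space via the Jordan--Wielandt lifting $\mathcal{B}:\mathbb{R}^{m\times n}\to\mathbb{S}^{m+n}$, under which the cone $K$ corresponds to an eigenvalue-sum constraint on $\mathcal{B}(X)$. The orthogonal matrix $\overline{P}\in\mathbb{O}^{m+n}$ defined in the statement is precisely a diagonalizer of $\mathcal{B}(\overline{X})$ built from the SVD of $\overline{X}$, and the index sets $a_l,\,b,\,c$ record the multiplicity grouping of the eigenvalues of $\mathcal{B}(X)$. Under this lifting, \cite[Proposition 4.2]{DingC10} produces the first summation $-\zeta\sum_j{\rm tr}(\cdots)$ via the second-order expansion of the eigenvalue-sum function on $\mathbb{S}^{m+n}$, with the pseudo-inverse factors $(\mathcal{B}(\overline{X})-\overline{\nu}_j I)^\dag$ arising from the resolvent representation of spectral second-order derivatives.

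The contribution of the matrix multiplier $\overline{\Gamma}$ then splits into the two announced cases. When $\sigma_k(\overline{X})>0$, the index block $\beta=\{i:\sigma_i(\overline{X})=\overline{\nu}\}$ lies entirely within the positive-singular-value block, so the resolvent is regular on the relevant invariant subspace and the coupling is absorbed cleanly by $\overline{P}_\beta$ in the uniform expression involving $(\mathcal{B}(\overline{X})-\overline{\nu}I)^\dag$. When $\sigma_k(\overline{X})=0$, the block $\beta$ meets the kernel of $\overline{X}$, the resolvent acquires a singular piece, and the expression must be split: the $\overline{X}^\dag$ term captures the contribution along the range of $\overline{X}$, while the $\overline{V}_2$ term accounts for the interaction with the right-kernel basis.

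The main obstacle will be the detailed bookkeeping required to verify that the lifted pseudo-inverse collapses to the stated form on the appropriate invariant subspaces, and to confirm that the resulting formula is independent of the non-unique SVD choice in \eqref{X-SVD}. Once this alignment is secured, the two cases are a direct transcription of the lifted \cite[Proposition 4.2]{DingC10} formula into the singular-value notation adopted throughout this paper.
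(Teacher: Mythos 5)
The paper states Lemma \ref{Lemma-Upsilon} without a proof: it is recalled verbatim from \cite[Proposition 4.2]{DingC10} and merely transcribed into the SVD notation \eqref{X-SVD}--\eqref{al}, so your proposal to invoke that proposition and then explain the correspondence through the Jordan--Wielandt lifting $\mathcal{B}$ (with $\overline{P}$ a diagonalizer of $\mathcal{B}(\overline{X})$ assembled from the SVD of $\overline{X}$) is exactly the paper's own approach, with explanatory scaffolding added. Your treatment of the $\overline{X}=0$ case is likewise consistent with how the lemma is actually deployed in Proposition \ref{main-prop}: when $\overline{t}>0$ the point $(\overline{t},0)$ lies in ${\rm int}\,K$, forcing $(\overline{\zeta},\overline{\Gamma})=(0,0)$ so that $\Upsilon$ vanishes trivially, while at the apex $(0,0)$ the cone identity $\mathcal{T}_K^2((0,0),d)=\mathcal{T}_K(d)$ shows the support function is zero precisely when $(\overline{\zeta},\overline{\Gamma})\in\mathcal{N}_K(d)$, i.e.\ on the critical directions $\langle(\overline{\zeta},\overline{\Gamma}),d\rangle=0$ that are the only ones fed into $\Upsilon$ in \eqref{equa3-main-prop}. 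The remaining ``bookkeeping'' you flag -- verifying that the lifted pseudo-inverse collapses to the stated blocks and that the result is SVD-choice-independent -- is indeed the content of \cite[Proposition 4.2]{DingC10}, which the paper simply does not reproduce.
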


  \section{Main results}\label{sec3}

  We first establish a proposition to provide an equivalent characterization for the point pair in
  ${\rm gph}\,D\mathcal{N}_{K^\circ}((\overline{\zeta},\overline{\Gamma})|(\overline{t},\overline{X}))$.
  The proof of this proposition is put in Appendix B.

 \begin{proposition}\label{main-prop}
  Let $((\overline{t},\overline{X}),(\overline{\zeta},\overline{\Gamma}))\!\in{\rm gph}\mathcal{N}_{K}$ be given.
  Write $\mathcal{X}:=(t,X)=(\overline{t},\overline{X})+(\overline{\zeta},\overline{\Gamma})$.
  Let $X$ have the SVD as in \eqref{X-SVD} with $a,b,c$ and $a_l\ (l=1,2,\ldots,r)$
  defined by \eqref{abc} and \eqref{al}. Then,
  $((\Delta t,\Delta X),(\Delta\zeta,\Delta\Gamma))\in{\rm gph}D\mathcal{N}_{K^\circ}
  \big((\overline{\zeta},\overline{\Gamma})|(\overline{t},\overline{X})\big)$ if and only if
  \begin{subnumcases}{}\label{equa1-main-prop}
   (\Delta\zeta,\Delta\Gamma)\in\mathcal{C}_K(\mathcal{X}),\\
   \label{equa2-main-prop}
   (\Delta t,\Delta X)-\big(0,\overline{U}\mathfrak{X}(\overline{U}^{\mathbb{T}}\Delta\Gamma\overline{V})\overline{V}^{\mathbb{T}}\big)
   \in [\mathcal{C}_K(\mathcal{X})]^{\circ},\\
   \label{equa3-main-prop}
   \big\langle(\Delta t,\Delta X),(\Delta\zeta,\Delta\Gamma)\big\rangle
   =-\Upsilon_{(\overline{t},\overline{X})}\big((\overline{\zeta},\overline{\Gamma}),(\Delta\zeta,\Delta\Gamma)\big),
  \end{subnumcases}
  where $\mathcal{C}_K(\mathcal{X})=\mathcal{T}_K((\overline{t},\overline{X}))\cap(\overline{\zeta},\overline{\Gamma})^{\perp}$
  is the critical cone associated to the complementarity problem $K\ni(t,X)\perp(\tau,Y)\in K^{\circ}$,
  and $\mathfrak{X}$ is a mapping with $\mathfrak{X}(\Delta\widetilde{\Gamma})=0$ if $\mathcal{X}\in{\rm int}\,K$
  or ${\rm int}\,K^{\circ}$, and otherwise $\mathfrak{X}(\Delta\widetilde{\Gamma})$ given by \eqref{MFX-Case1}
  if $\sigma_k(\overline{X})>0$ and given by \eqref{MFX-Case2} if $\sigma_k(\overline{X})=0$.
 \end{proposition}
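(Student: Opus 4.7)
The strategy is to use Lemma \ref{tcone-lemma} to convert the graphical derivative characterization into a concrete identity on the directional derivative of the projection $\Pi_K$, and then to unpack that identity using the explicit SVD-based formula for $\Pi'_K$ at $\mathcal{X}=(\overline{t}+\overline{\zeta},\overline{X}+\overline{\Gamma})$.

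First, I would apply Lemma \ref{tcone-lemma} to the complementary pair $((\overline{\zeta},\overline{\Gamma}),(\overline{t},\overline{X}))\in{\rm gph}\,\mathcal{N}_{K^\circ}$ (using the $K\leftrightarrow K^\circ$ symmetry stated in that lemma): the membership
\[
((\Delta t,\Delta X),(\Delta\zeta,\Delta\Gamma))\in{\rm gph}\,D\mathcal{N}_{K^\circ}\bigl((\overline{\zeta},\overline{\Gamma})\,\big|\,(\overline{t},\overline{X})\bigr)
\]
is equivalent to the single directional-derivative equation $\Pi'_K(\mathcal{X};(\Delta t+\Delta\zeta,\Delta X+\Delta\Gamma))=(\Delta t,\Delta X)$. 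This reduces the graphical-derivative problem to an analysis of $\Pi'_K$ at the point $\mathcal{X}$.

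Second, I would invoke the closed-form expression for $\Pi'_K(\mathcal{X};\cdot)$ in terms of the SVD $X=\overline{U}[{\rm Diag}(\sigma(X))\ \ 0]\overline{V}^{\mathbb{T}}$, where the derivative decomposes according to the index blocks $a,b,c$ from \eqref{abc} and the multiplicity groups $a_l$ from \eqref{al}. The degenerate cases $\mathcal{X}\in{\rm int}\,K$ and $\mathcal{X}\in{\rm int}\,K^\circ$ are handled separately and quickly: the projection is locally the identity, respectively the zero map, so $\mathfrak{X}\equiv 0$ is consistent, $\mathcal{C}_K(\mathcal{X})$ collapses to $\mathbb{X}$ or $\{0\}$, and all three conditions of the proposition reduce to the trivial equivalence. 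For $\mathcal{X}$ on the proper boundary of $K$, I would split the identity $\Pi'_K(\mathcal{X};\cdot)=(\Delta t,\Delta X)$ into its SVD blocks: the restriction on the \emph{argument} direction forces $(\Delta\zeta,\Delta\Gamma)\in\mathcal{C}_K(\mathcal{X})$, which is \eqref{equa1-main-prop}; the Moreau-dual statement (obtained by the third equivalence in Lemma \ref{tcone-lemma} applied to $\Pi'_{K^\circ}$) forces $(\Delta t,\Delta X)-(0,\overline{U}\mathfrak{X}(\overline{U}^{\mathbb{T}}\Delta\Gamma\overline{V})\overline{V}^{\mathbb{T}})\in[\mathcal{C}_K(\mathcal{X})]^{\circ}$, which is \eqref{equa2-main-prop}, with the correction $\mathfrak{X}$ encoding the off-diagonal mixing caused by equal singular values of $X$.

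Third, the remaining piece \eqref{equa3-main-prop} is the complementary-slackness identity $\langle(\Delta t,\Delta X),(\Delta\zeta,\Delta\Gamma)\rangle=-\Upsilon_{(\overline{t},\overline{X})}((\overline{\zeta},\overline{\Gamma}),(\Delta\zeta,\Delta\Gamma))$. Because $K$ is second-order regular by \cite[Proposition 4.3]{DingC10}, the usual sigma-term extraction from the second-order Taylor expansion of the projection applies, and the right-hand side is precisely the support function of $\mathcal{T}_K^2((\overline{t},\overline{X}),(\Delta\zeta,\Delta\Gamma))$ evaluated at $(\overline{\zeta},\overline{\Gamma})$ as written in Lemma \ref{Lemma-Upsilon}.

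The main obstacle, I expect, is the last step together with the explicit verification of $\mathfrak{X}$: two subcases must be handled, depending on whether $\sigma_k(\overline{X})>0$ or $\sigma_k(\overline{X})=0$, because only in the second case does the tail block indexed by $c=\{m{+}1,\ldots,n\}$ contribute, changing both the form of $\mathfrak{X}$ and the expression for $\Upsilon$ (the second formula of Lemma \ref{Lemma-Upsilon}). Matching the inner product on the left of \eqref{equa3-main-prop} with the explicit sigma term requires careful SVD algebra—in particular, identifying the pseudoinverse $(\mathcal{B}(\overline{X})-\overline{\nu}_jI)^{\dag}$, lifting to the symmetric embedding via $\mathcal{B}$, and tracking how the rotation $\overline{P}$ built from $\overline{U}$ and $\overline{V}$ diagonalizes the second-order curvature. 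This SVD bookkeeping, together with verifying $\mathcal{C}_K(\mathcal{X})=\mathcal{T}_K(\mathcal{X})\cap(\overline{\zeta},\overline{\Gamma})^\perp$ in each block configuration, is where the bulk of the computation lies and is the reason the full proof is deferred to an appendix.
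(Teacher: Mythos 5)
Your high-level strategy is the right one and matches the paper's: reduce the graphical-derivative membership to a directional-derivative identity for $\Pi_K$ via Lemma \ref{tcone-lemma}, dispose of the interior cases $\mathcal{X}\in{\rm int}\,K$ and $\mathcal{X}\in{\rm int}\,K^{\circ}$ immediately, and then perform SVD block bookkeeping in the two subcases $\sigma_k(\overline{X})>0$ and $\sigma_k(\overline{X})=0$, with the sigma term of \eqref{equa3-main-prop} coming from second-order regularity. However, there are two issues worth flagging.

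First, your reduction equation is written with the wrong right-hand side. Chasing the equivalences in Lemma \ref{tcone-lemma} carefully (second equivalence to swap $K^\circ$ and $K$, then third equivalence), the membership $((\Delta t,\Delta X),(\Delta\zeta,\Delta\Gamma))\in{\rm gph}\,D\mathcal{N}_{K^\circ}((\overline{\zeta},\overline{\Gamma})\,|\,(\overline{t},\overline{X}))$ is equivalent to $\Pi'_K(\mathcal{X};(\Delta t+\Delta\zeta,\Delta X+\Delta\Gamma))=(\Delta\zeta,\Delta\Gamma)$, not $=(\Delta t,\Delta X)$. Your subsequent claims (that $(\Delta\zeta,\Delta\Gamma)\in\mathcal{C}_K(\mathcal{X})$ and that $(\Delta t,\Delta X)$ up to the $\mathfrak{X}$ correction lies in the polar) are consistent with the \emph{correct} formula, so you clearly had the right picture in mind, but as literally written the equation is inverted and the rest would not follow from it.

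Second, and more substantively, you only argue the forward direction, but the proposition is an equivalence. Your language (``forces $(\Delta\zeta,\Delta\Gamma)\in\mathcal{C}_K(\mathcal{X})$'', ``forces $\ldots\in[\mathcal{C}_K(\mathcal{X})]^\circ$'') extracts consequences from the $\Pi'_K$ identity, which covers the implication from graphical-derivative membership to \eqref{equa1-main-prop}--\eqref{equa3-main-prop}. The converse is not automatic: one must show that \eqref{equa1-main-prop}--\eqref{equa3-main-prop} together imply the directional-derivative equation. The paper does this at the end of its proof by first combining \eqref{equa1-main-prop}--\eqref{equa3-main-prop} with the explicit sigma-term computation to obtain $\big\langle(\Delta t,\Delta X)-\widehat{\mathfrak{X}}(\Delta\zeta,\Delta\Gamma),(\Delta\zeta,\Delta\Gamma)\big\rangle=0$, concluding that the combined vector lies in $\mathcal{N}_{\mathcal{C}_K(\mathcal{X})}(\Delta\zeta,\Delta\Gamma)$, and then invoking \cite[Theorem 7.2]{BS98} with $d=(\Delta t+\Delta\zeta,\Delta X+\Delta\Gamma)$ and $u^*=(\Delta\zeta,\Delta\Gamma)$ to recover $\Pi'_K(\mathcal{X};d)=u^*$. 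This second-order variational characterization of $\Pi'_K$ is a genuine additional ingredient that your proposal needs but does not mention; without it the ``only if'' half of the proposition remains unproved.
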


  Now we are in a position to establish the main results, which are stated as follows.
 \begin{theorem}
  Let $\overline{\mathcal{X}}\!=(\overline{t},\overline{X})\in\mathbb{X}$ be a stationary point of \eqref{NMCP}
  with $(\overline{\lambda},\overline{\mathcal{Y}})\in\mathcal{M}(\overline{\mathcal{X}})$.
  Write $\overline{\mathcal{Y}}=(\overline{\tau},\overline{Y})$ and
  $G(\overline{\mathcal{X}})=(G_1(\overline{\mathcal{X}}),G_2(\overline{\mathcal{X}}))$.
  Let $X=\overline{Y}+G_2(\overline{\mathcal{X}})$ have the SVD as in \eqref{X-SVD}
  with $a,b,c$ and $a_l\ (l=1,\ldots,r)$ given by \eqref{abc} and \eqref{al} when
  $X\notin{\rm int}\,K\cup{\rm int}\,K^{\circ}$.
  \begin{itemize}
   \item[{\bf (a)}] If the second-order sufficient condition of problem \eqref{NMCP} holds
              at $\overline{\mathcal{X}}$ and $(\overline{\lambda},\overline{Y})$ satisfies
              the strict Robinson's CQ, then the multifunctions $\mathcal{J}$ and
              $\widetilde{\mathcal{J}}$ defined in equations \eqref{Jmap} and \eqref{WJmap}
              respectively are locally upper Lipschitz at the origin
              for $(\overline{\mathcal{X}},\overline{\lambda},\overline{\mathcal{Y}})$.

   \item[{\bf (b)}] If the multifunction $\mathcal{J}$ is locally upper Lipschitz at the origin
                    for $(\overline{\mathcal{X}},\overline{\lambda},\overline{\mathcal{Y}})$,
                    then the strict Robinson's CQ holds at $(\overline{\lambda},\overline{\mathcal{Y}})$.
   \end{itemize}
 \end{theorem}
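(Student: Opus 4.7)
The plan is to invoke Lemma \ref{SN-cond} and reduce the locally-upper-Lipschitz statement for $\mathcal{J}$ and $\widetilde{\mathcal{J}}$ to $D\mathcal{J}(0|\overline{\mathcal{W}})(0)=\{0\}$ and $D\widetilde{\mathcal{J}}(0|\overline{\mathcal{W}})(0)=\{0\}$. Since $\Psi$ is directionally differentiable (the only nonsmooth piece being $\Pi_K$) and $\widetilde{\Psi}$ has the additive-in-perturbation form covered by the last clause of Lemma \ref{Graph-dir}, any triple $(\Delta\mathcal{X},\Delta\lambda,\Delta\mathcal{Y})$ in either graphical derivative at $0$ for $\overline{\mathcal{W}}$ must solve
\begin{align*}
  & \nabla^{2}_{\mathcal{X}\mathcal{X}} L(\overline{\mathcal{X}};\overline{\lambda},\overline{\mathcal{Y}})\Delta\mathcal{X}+h'(\overline{\mathcal{X}})^{*}\Delta\lambda+G'(\overline{\mathcal{X}})^{*}\Delta\mathcal{Y}=0, \\
  & h'(\overline{\mathcal{X}})\Delta\mathcal{X}=0, \\
  & G'(\overline{\mathcal{X}})\Delta\mathcal{X}=\Pi_K'\bigl(G(\overline{\mathcal{X}})+\overline{\mathcal{Y}};\,G'(\overline{\mathcal{X}})\Delta\mathcal{X}+\Delta\mathcal{Y}\bigr).
\end{align*}
By Lemma \ref{tcone-lemma} the projection equation is equivalent to $(\Delta\mathcal{Y},G'(\overline{\mathcal{X}})\Delta\mathcal{X})\in{\rm gph}\,D\mathcal{N}_{K^{\circ}}(\overline{\mathcal{Y}}|G(\overline{\mathcal{X}}))$, so Proposition \ref{main-prop}, applied with $(\overline{t},\overline{X})\leftarrow G(\overline{\mathcal{X}})$ and $(\overline{\zeta},\overline{\Gamma})\leftarrow\overline{\mathcal{Y}}$ (so that $X=\overline{Y}+G_{2}(\overline{\mathcal{X}})$ carries the SVD data specified in the theorem), yields the equivalent conditions \eqref{equa1-main-prop}--\eqref{equa3-main-prop}.

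For part (a), I first verify $\Delta\mathcal{X}\in\mathcal{C}(\overline{\mathcal{X}})$: the inclusion $\mathcal{T}_{{\rm gph}\,\mathcal{N}_K}\subseteq \mathcal{T}_K\times\mathcal{T}_{K^{\circ}}$ gives $G'(\overline{\mathcal{X}})\Delta\mathcal{X}\in\mathcal{T}_K(G(\overline{\mathcal{X}}))$, differentiating the complementarity $\langle\cdot,\cdot\rangle=0$ along a graph-sequence yields $\langle G'(\overline{\mathcal{X}})\Delta\mathcal{X},\overline{\mathcal{Y}}\rangle+\langle G(\overline{\mathcal{X}}),\Delta\mathcal{Y}\rangle=0$ with both (non-positive) summands therefore vanishing, and stationarity then gives $\langle f'(\overline{\mathcal{X}}),\Delta\mathcal{X}\rangle=-\langle\overline{\mathcal{Y}},G'(\overline{\mathcal{X}})\Delta\mathcal{X}\rangle=0$. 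Pairing the adjoint equation with $\Delta\mathcal{X}$ and using \eqref{equa3-main-prop} then produces the identity
\[
   \bigl\langle \Delta\mathcal{X},\nabla^{2}_{\mathcal{X}\mathcal{X}} L(\overline{\mathcal{X}};\overline{\lambda},\overline{\mathcal{Y}})\Delta\mathcal{X}\bigr\rangle=\Upsilon_{G(\overline{\mathcal{X}})}\bigl(\overline{\mathcal{Y}},\Delta\mathcal{Y}\bigr).
\]
I expect the main obstacle to be the comparison
\[
  \Upsilon_{G(\overline{\mathcal{X}})}\bigl(\overline{\mathcal{Y}},\Delta\mathcal{Y}\bigr)\le\sigma\bigl(\overline{\mathcal{Y}},\mathcal{T}_K^{2}(G(\overline{\mathcal{X}}),G'(\overline{\mathcal{X}})\Delta\mathcal{X})\bigr)=\Upsilon_{G(\overline{\mathcal{X}})}\bigl(\overline{\mathcal{Y}},G'(\overline{\mathcal{X}})\Delta\mathcal{X}\bigr),
\]
which switches the $\Upsilon$-argument from the dual direction $\Delta\mathcal{Y}$ to the primal one $G'(\overline{\mathcal{X}})\Delta\mathcal{X}$; I would attack it by inserting the closed-form expression of Lemma \ref{Lemma-Upsilon}, using the link between $G'(\overline{\mathcal{X}})\Delta\mathcal{X}$ and $\mathfrak{X}(\overline{U}^{\mathbb{T}}\Delta Y\overline{V})$ supplied by \eqref{equa2-main-prop}, and reducing to a block-by-block comparison of quadratic forms in the SVD-adapted basis $\overline{U},\overline{V}$. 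Once this is secured, the SOSC of Lemma \ref{SOSC} forces $\Delta\mathcal{X}=0$; the adjoint equation then collapses to $h'(\overline{\mathcal{X}})^{*}\Delta\lambda+G'(\overline{\mathcal{X}})^{*}\Delta\mathcal{Y}=0$, and the strict Robinson's CQ in its dual form obtained by polarising \eqref{SCQ} rules out any nonzero solution $(\Delta\lambda,\Delta\mathcal{Y})$.

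For part (b) I argue the contrapositive. If strict Robinson's CQ fails at $(\overline{\lambda},\overline{\mathcal{Y}})$, the Lagrange multiplier set $\mathcal{M}(\overline{\mathcal{X}})\subseteq\mathbb{R}^{p}\times\mathbb{X}$---a convex set defined by the linear equation in $(\lambda,\mathcal{Y})$ together with the conic inclusion $\mathcal{Y}\in\mathcal{N}_K(G(\overline{\mathcal{X}}))$---is not locally a singleton at $(\overline{\lambda},\overline{\mathcal{Y}})$, so a line segment in $\mathcal{M}(\overline{\mathcal{X}})$ emanating from $(\overline{\lambda},\overline{\mathcal{Y}})$ furnishes a nonzero tangent direction $(\Delta\lambda,\Delta\mathcal{Y})$ along which $(\overline{\lambda}+t\Delta\lambda,\overline{\mathcal{Y}}+t\Delta\mathcal{Y})$ remains a multiplier. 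Then $(0,\Delta\lambda,\Delta\mathcal{Y})$ satisfies the three defining equations of $D\mathcal{J}(0|\overline{\mathcal{W}})(0)$ displayed above and is nonzero; Lemma \ref{SN-cond} therefore excludes the locally-upper-Lipschitz property of $\mathcal{J}$.
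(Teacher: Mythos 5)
Your overall strategy for part (a) -- reduce to $D\mathcal{J}(0|\overline{\mathcal{W}})(0)=\{0\}$ via Lemma \ref{SN-cond}, linearize, feed the point pair into Proposition \ref{main-prop}, then apply the SOSC and strict Robinson's CQ -- is exactly the paper's route. But there is a concrete error in how you read Proposition \ref{main-prop} that then creates a phantom ``main obstacle.'' You correctly note that the projection equation gives $(\Delta\mathcal{Y},G'(\overline{\mathcal{X}})\Delta\mathcal{X})\in{\rm gph}\,D\mathcal{N}_{K^{\circ}}(\overline{\mathcal{Y}}|G(\overline{\mathcal{X}}))$, so the substitution into the proposition is $(\Delta t,\Delta X)\leftarrow\Delta\mathcal{Y}$ and $(\Delta\zeta,\Delta\Gamma)\leftarrow G'(\overline{\mathcal{X}})\Delta\mathcal{X}$. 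Then equation \eqref{equa3-main-prop} reads
\[
   \langle\Delta\mathcal{Y},\,G'(\overline{\mathcal{X}})\Delta\mathcal{X}\rangle
   \;=\;-\,\Upsilon_{G(\overline{\mathcal{X}})}\bigl(\overline{\mathcal{Y}},\,G'(\overline{\mathcal{X}})\Delta\mathcal{X}\bigr),
\]
with the \emph{primal} direction $G'(\overline{\mathcal{X}})\Delta\mathcal{X}$ already in the second slot of $\Upsilon$. Pairing the adjoint equation with $\Delta\mathcal{X}$ therefore gives directly
\[
   \bigl\langle \Delta\mathcal{X},\nabla^{2}_{\mathcal{X}\mathcal{X}} L(\overline{\mathcal{X}};\overline{\lambda},\overline{\mathcal{Y}})\Delta\mathcal{X}\bigr\rangle
   -\Upsilon_{G(\overline{\mathcal{X}})}\bigl(\overline{\mathcal{Y}},G'(\overline{\mathcal{X}})\Delta\mathcal{X}\bigr)=0,
\]
which is precisely the quantity the SOSC controls. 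The identity you wrote, with $\Delta\mathcal{Y}$ in the second $\Upsilon$-slot, is not what \eqref{equa3-main-prop} produces, and so the inequality ``$\Upsilon(\overline{\mathcal{Y}},\Delta\mathcal{Y})\le\Upsilon(\overline{\mathcal{Y}},G'(\overline{\mathcal{X}})\Delta\mathcal{X})$'' that you flag as the main obstacle is not needed at all; the block-by-block comparison you propose is extra work for a step that does not exist. A second, smaller slip: $\widetilde{\Psi}$ does \emph{not} have the additive form $F(y,z)=y+H(z)$ (the perturbation $\delta_G$ enters nonlinearly through $\Pi_K$); only $\Psi$, i.e., $F(\delta,\mathcal{W})=\Psi(\mathcal{W})-\delta$, does. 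For $\widetilde{\mathcal{J}}$ the paper uses only the one-sided inclusion of Lemma \ref{Graph-dir}, $D\widetilde{\mathcal{J}}(0|\overline{\mathcal{W}})(0)\subseteq\mathscr{F}$, which is all that is needed since the sandwich $\{0\}\subseteq D\widetilde{\mathcal{J}}(0|\overline{\mathcal{W}})(0)\subseteq\mathscr{F}=\{0\}$ already closes the argument.

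For part (b) you take a genuinely different route. The paper argues directly from the failure of \eqref{SCQ}: polarize to extract a nonzero $(\widehat{\lambda},\widehat{\mathcal{Y}})$ with $h'(\overline{\mathcal{X}})^{*}\widehat{\lambda}+G'(\overline{\mathcal{X}})^{*}\widehat{\mathcal{Y}}=0$ and $\widehat{\mathcal{Y}}\in(\mathcal{T}_{K}(G(\overline{\mathcal{X}}))\cap\overline{\mathcal{Y}}^{\perp})^{\circ}=\mathcal{T}_{\mathcal{N}_{K}(G(\overline{\mathcal{X}}))}(\overline{\mathcal{Y}})$, use the implication ``tangent to $\mathcal{N}_K(G(\overline{\mathcal{X}}))$ at $\overline{\mathcal{Y}}$ implies $(0,\widehat{\mathcal{Y}})\in\mathcal{T}_{{\rm gph}\mathcal{N}_K}$'' and Lemma \ref{tcone-lemma} to get $\Pi_{K}'(G(\overline{\mathcal{X}})+\overline{\mathcal{Y}};\widehat{\mathcal{Y}})=0$, and conclude $(0,\widehat{\lambda},\widehat{\mathcal{Y}})\in D\mathcal{J}(0|\overline{\mathcal{W}})(0)\neq\{0\}$. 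You instead invoke the equivalence between strict Robinson's CQ and uniqueness of the Lagrange multiplier and then produce a line segment inside $\mathcal{M}(\overline{\mathcal{X}})$. That equivalence is true (it is \cite[Proposition 4.47]{BS00}) but it is a nontrivial theorem and you neither cite nor prove it, so as written this step is a gap; note also that a dual vector $\widehat{\mathcal{Y}}$ lying in the tangent cone $\mathcal{T}_{\mathcal{N}_K(G(\overline{\mathcal{X}}))}(\overline{\mathcal{Y}})$ is only a limit of feasible directions, so one cannot pass from it to an actual second multiplier $\overline{\mathcal{Y}}+t\widehat{\mathcal{Y}}$ without the BS00 result. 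The paper's approach avoids this entirely by working at the level of tangent cones, which is exactly what the graphical derivative requires and is therefore the more self-contained argument.
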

 \begin{proof}
  {\bf (a)} Since $\Pi_{K}(\cdot)$ is directionally differentiable everywhere and globally
  Lipschitz continuous by \cite{DingC10}, and the function $f$ and the mappings $h$ and $G$ are twice continuously differentiable,
  by \cite[Proposition 2.47]{BS00} the mapping $\widetilde{\Psi}$ is directionally differentiable with
  \begin{align}\label{Fdir}
   &\widetilde{\Psi}'\big((\delta,\mathcal{X},\lambda,\mathcal{Y}\big);(\Delta_{\delta},\Delta_{\mathcal{X}},\Delta_{\lambda},\Delta_{\mathcal{Y}})\big)\nonumber\\
   =&\left(\begin{matrix}
      -\Delta_{\delta_f}+\nabla_{\mathcal{X}\mathcal{X}}^2L(\mathcal{X},\lambda,\mathcal{Y})\Delta_{\mathcal{X}}
      +[h'(\mathcal{X})]^*\Delta_{\lambda}+[G'(\mathcal{X})]^*\Delta_{\mathcal{Y}}\\
       -\Delta_{\delta_h}+h'(\mathcal{X})\Delta_{\mathcal{X}}\\
      G'(\mathcal{X})\Delta_{\mathcal{X}}-\Delta_{\delta_{G}}-\Pi_{K}'\big(G(\mathcal{X})-\delta_{G}+\mathcal{Y};
      G'(\mathcal{X})\Delta_{\mathcal{X}}-\Delta_{\delta_{G}}+\Delta_{\mathcal{Y}}\big)\\
   \end{matrix}\right),
  \end{align}
  where $[h'(\overline{\mathcal{X}})]^*$ and $[G'(\overline{\mathcal{X}})]^*$ denote the adjoint
  of $h'(\overline{\mathcal{X}})$ and $G'(\overline{\mathcal{X}})$, respectively. Let
  \[
   \mathscr{F}\!:=\big\{(\Delta_{\mathcal{X}},\Delta_{\lambda},\Delta_{\mathcal{Y}})\in\mathbb{X}\times\mathbb{R}^p\times\mathbb{X}
              \ |\ \widetilde{\Psi}'\big((0,\overline{\mathcal{X}},\overline{\lambda},\overline{\mathcal{Y}});
               (0,\Delta_{\mathcal{X}},\Delta_{\lambda},\Delta_{\mathcal{Y}})\big)\!=0\big\}.
 \]
 By Lemma \ref{Graph-dir} and the directional differentiability of $\widetilde{\Psi}$ and $\Psi$,
 it follows that
 \begin{equation}\label{inclusion}
   D\widetilde{\mathcal{J}}\big((0,0,0)|(\overline{\mathcal{X}},\overline{\lambda},\overline{\mathcal{Y}})\big)(0,0,0)
   \subseteq\mathscr{F}=D\mathcal{J}\big((0,0,0)|(\overline{\mathcal{X}},\overline{\lambda},\overline{\mathcal{Y}})\big)(0,0,0).
 \end{equation}
  Next we prove that $\mathscr{F}=\{(0,0,0)\}$. Clearly, $(0,0,0)\in\mathscr{F}$.
  Suppose on the contradiction that there exists a nonzero $\Delta_{\mathscr{F}}=(\Delta_{\mathcal{X}},\Delta_{\lambda},\Delta_{\mathcal{Y}})\in\mathscr{F}$.
  Together with \eqref{Fdir}, we have that
 \begin{equation}\label{Natural-equa1}
   \nabla_{\mathcal{X}\mathcal{X}}^2L(\overline{\mathcal{X}},\overline{\lambda},\overline{\mathcal{Y}})\Delta_{\mathcal{X}}
   +[h'(\overline{\mathcal{X}})]^*\Delta_{\lambda}+[G'(\overline{\mathcal{X}})]^*\Delta_{\mathcal{Y}}=0,
 \end{equation}
 \begin{equation}\label{Natural-equa2}
   h'(\overline{\mathcal{X}})\Delta_{\mathcal{X}}=0,
 \end{equation}
 \begin{equation}\label{Natural-equa3}
  \Pi_{K}'\big(G(\overline{\mathcal{X}})+\overline{\mathcal{Y}};G'(\overline{\mathcal{X}})\Delta_{\mathcal{X}}+\Delta_{\mathcal{Y}}\big)
   =G'(\overline{\mathcal{X}})\Delta_{\mathcal{X}}.
  \end{equation}
 Making the inner product with $\Delta_{\mathcal{X}}$ for the two sides of \eqref{Natural-equa1}
 and using \eqref{Natural-equa2} yields that
 \begin{equation}\label{temp-SOSC-equa}
   \left\langle \Delta_{\mathcal{X}},
   \nabla_{\mathcal{X}\mathcal{X}}^2L(\overline{\mathcal{X}},\overline{\lambda},\overline{\mathcal{Y}})\Delta_{\mathcal{X}}\right\rangle
   +\left\langle G'(\overline{\mathcal{X}})\Delta_{\mathcal{X}},\Delta_{\mathcal{Y}}\right\rangle =0.
 \end{equation}
  From equation \eqref{Natural-equa3} and Lemma \ref{tcone-lemma}, it immediately follows that
  \begin{equation}\label{Natural-equa4}
   \big(\Delta_{\mathcal{Y}},G'(\overline{\mathcal{X}})\Delta_{\mathcal{X}}\big)
   \in {\rm gph}\,D\mathcal{N}_{K^{\circ}}\big(\overline{\mathcal{Y}}|G(\overline{\mathcal{X}})\big).
 \end{equation}
  By \eqref{equa3-main-prop} of Proposition \ref{main-prop},
  \(
   \langle \Delta_{\mathcal{Y}},G'(\overline{\mathcal{X}})\Delta_{\mathcal{X}}\rangle\!= -\Upsilon_{G(\overline{\mathcal{X}})}\big(\overline{\mathcal{Y}},G'(\overline{\mathcal{X}})\Delta_{\mathcal{X}}\big).
 \)
 Then, we have
 \begin{equation}\label{temp-main-equa1}
   \langle \Delta_{\mathcal{X}},\nabla_{\mathcal{X}\mathcal{X}}^2L(\overline{\mathcal{X}},
   \overline{\lambda},\overline{\mathcal{Y}})\Delta_{\mathcal{X}}\rangle
   -\Upsilon_{G(\overline{\mathcal{X}})}\big(\overline{\mathcal{Y}},G'(\overline{\mathcal{X}})\Delta_{\mathcal{X}}\big)=0.
 \end{equation}
  In addition, from \eqref{Natural-equa4} and \eqref{equa1-main-prop} of Proposition \ref{main-prop},
 \(
  G'(\overline{\mathcal{X}})\Delta_{\mathcal{X}}\in \mathcal{T}_{K}(G(\overline{\mathcal{X}}))\cap \overline{\mathcal{Y}}^\perp,
 \)
 which along with \eqref{Natural-equa1} and $L_{\mathcal{X}}'(\overline{\mathcal{X}};\overline{\lambda},\overline{\mathcal{Y}})=0$
 implies that $\Delta_{\mathcal{X}}\in \mathcal{C}(\overline{\mathcal{X}})$.
 So, by \eqref{temp-main-equa1} and the second-order sufficient condition,
 we have $\Delta_{\mathcal{X}}=0$. Substituting $\Delta_{\mathcal{X}}=0$ into \eqref{Natural-equa1} yields that
 \(
   [h'(\overline{\mathcal{X}})]^*\Delta_{\lambda}+[G'(\overline{\mathcal{X}})]^*\Delta_{\mathcal{Y}}=0.
 \)
 Since $\Delta_{\mathcal{X}}\!=0$, equations \eqref{Natural-equa4} and \eqref{equa2-main-prop} imply that
 $\Delta_{\mathcal{Y}}\in [\mathcal{T}_{K}(G(\overline{\mathcal{X}}))\cap \overline{\mathcal{Y}}^\perp]^{\circ}$.
 Then,
 \(
  (\Delta_{\lambda},\Delta_{\mathcal{Y}})\in
   \Big[\left(\begin{matrix}
       h'(\overline{\mathcal{X}})\\
       G'(\overline{\mathcal{X}})
       \end{matrix}\right)\mathbb{X}\Big]^{\perp}
   \bigcap\Big[\begin{matrix}
       0\\ \mathcal{T}_{K}(G(\overline{\mathcal{X}}))\cap\overline{\mathcal{Y}}^{\perp}
       \end{matrix}\Big]^{\circ}.
 \)
 Thus, the strict Robinson's CQ at $(\overline{\lambda},\overline{\mathcal{Y}})$ implies that
 $(\Delta_{\lambda},\Delta_{\mathcal{Y}})=(0,0)$.
 Consequently, $(\Delta_{\mathcal{X}},\Delta_{\lambda},\Delta_{\mathcal{Y}})=0$.
 This yields a contradiction. Hence,
 \(
   D\mathcal{J}\big((0,0,0)|(\overline{\mathcal{X}},\overline{\lambda},\overline{\mathcal{Y}})\big)(0,0,0)
   =\mathscr{F}=\{(0,0,0)\},
 \)
 and then
 $\{(0,0,0)\}\subseteq D\widetilde{\mathcal{J}}\big((0,0,0)|(\overline{\mathcal{X}},\overline{\lambda},\overline{\mathcal{Y}})\big)(0,0,0)
 \subseteq\{(0,0,0)\}$. By Lemma \ref{SN-cond}, $\mathcal{J}$ and
 $\widetilde{\mathcal{J}}$ are both locally upper Lipschitz at the origin for $(\overline{\mathcal{X}},\overline{\lambda},\overline{\mathcal{Y}})$.

 \medskip
 \noindent
 {\bf (b)} Suppose that the strict Robinson's CQ does not hold at $(\overline{\lambda},\overline{\mathcal{Y}})$.
 From \eqref{SCQ}, we have
 \begin{equation}
    \left[\left(\begin{matrix}
       h'(\overline{\mathcal{X}})\\
       G'(\overline{\mathcal{X}})
       \end{matrix}\right)\mathbb{X}
       +\left(\begin{matrix}
       0\\ \mathcal{T}_{K}(G(\overline{\mathcal{X}}))\cap\overline{\mathcal{Y}}^{\perp}
       \end{matrix}\right)\right]^{\circ}\neq
       \left(\begin{matrix}
       0\\ 0
       \end{matrix}\right),
  \end{equation}
  which is equivalent to saying that there exists $0\neq (\widehat{\lambda},\widehat{\mathcal{Y}})\in \mathbb{R}^p\times \mathbb{X}$
  such that
  \begin{equation*}
    (\widehat{\lambda},\widehat{\mathcal{Y}})\in\left[\left(\begin{matrix}
       h'(\overline{\mathcal{X}})\\
       G'(\overline{\mathcal{X}})
       \end{matrix}\right)\mathbb{X}\right]^{\bot}\bigcap\left(\begin{matrix}
       \mathbb{R}^p\\ [\mathcal{T}_{K}(G(\overline{\mathcal{X}}))\cap\overline{\mathcal{\mathcal{Y}}}^{\perp}]^{\circ}
       \end{matrix}\right).
  \end{equation*}
  Consequently, we have that
  $[h'(\overline{\mathcal{X}})]^*\widehat{\lambda}+[G'(\overline{\mathcal{X}})]^* \widehat{\mathcal{Y}}=0$
  and the following relation holds:
  \begin{equation}\label{equa2-partb}
  \widehat{\mathcal{Y}}\in \big(\mathcal{T}_{K}(G(\overline{\mathcal{X}}))\cap\overline{\mathcal{\mathcal{Y}}}^{\perp}\big)^{\circ}
  ={\rm cl}\big(\mathcal{N}_{K}(G(\overline{\mathcal{X}}))+[\![\overline{\mathcal{Y}}]\!]\big)
  =\mathcal{T}_{\mathcal{N}_K(G(\overline{\mathcal{X}}))}(\overline{\mathcal{Y}})
  \end{equation}
  where the last equality is due to \cite[Example 2.62]{BS00}.
  By the definition of contingent cone, it is easy to verify that
  if $\mathcal{Z}\in\mathcal{T}_{\mathcal{N}_K(G(\overline{\mathcal{X}}))}(\overline{\mathcal{Y}})$,
  then $(0,\mathcal{Z})\in\mathcal{T}_{{\rm gph}\,\mathcal{N}_K}(G(\overline{\mathcal{X}}),\overline{\mathcal{Y}})$.
  Thus, equation \eqref{equa2-partb} means that
  $(0,\widehat{\mathcal{Y}})\in\mathcal{T}_{{\rm gph}\,\mathcal{N}_K}(G(\overline{\mathcal{X}}),\overline{\mathcal{Y}})$.
  Using Lemma \ref{tcone-lemma}, we have that
  \(
    \Pi_{K}'\big(G(\overline{\mathcal{X}})+\overline{\mathcal{Y}};\widehat{\mathcal{Y}}\big)=0.
  \)
  Together with $[h'(\overline{\mathcal{X}})]^*\widehat{\lambda}+[G'(\overline{\mathcal{X}})]^* \widehat{\mathcal{Y}}=0$,
  we obtain that
  \[
   \Psi'\big((\overline{\mathcal{X}},\overline{\lambda}, \overline{\mathcal{Y}});(0,\widehat{\lambda},\widehat{\mathcal{Y}})\big)=0.
  \]
   By Lemma \ref{Graph-dir}, we have $(0,\widehat{\lambda},\widehat{\mathcal{Y}})\in
  D\mathcal{J}((0,0,0)|(\overline{\mathcal{X}},\overline{\lambda}, \overline{\mathcal{Y}}))(0,0,0)
  =\{(0,0,0)\}$, where the equality is due to the locally upper Lipschitz $\mathcal{J}$ at the origin
  for $(\overline{\mathcal{X}},\overline{\lambda},\overline{\mathcal{Y}})$ and Lemma \ref{SN-cond}.
  Thus, we get $(\widehat{\lambda}, \widehat{\mathcal{Y}})=0$,
  a contradiction to $(\widehat{\lambda}, \widehat{\mathcal{Y}})\ne 0$.
  The proof is completed.
 \end{proof}
 \begin{remark}\label{main-remark}
  {\bf(a)} From the inclusion relation in \eqref{inclusion} and Lemma \ref{SN-cond},
  it is not hard to see that the locally upper Lipschitz of $\mathcal{J}$ at the origin
  for $(\overline{\mathcal{X}},\overline{\lambda},\overline{\mathcal{Y}})$
  implies that of $\widetilde{\mathcal{J}}$ at the origin for
  $(\overline{\mathcal{X}},\overline{\lambda},\overline{\mathcal{Y}})$.
  Hence, when the condition of part (b) is replaced by the locally upper Lipschitz of
  $\widetilde{\mathcal{J}}$ at the origin for $(\overline{\mathcal{X}},\overline{\lambda},\overline{\mathcal{Y}})$,
  the strict Robinson's CQ may not hold.

  \medskip
  \noindent
  {\bf(b)} Let $\Omega$ be the KKT point set of \eqref{NMCP}.
  By Lemma \ref{equiv-chara-LUL}, the locally upper Lipschitz of $\mathcal{J}$ at $0$ for
  $\overline{\mathcal{W}}=\!(\overline{\mathcal{X}},\overline{\lambda},\overline{\mathcal{Y}})$
  means that there are a constant $\vartheta\!\ge 0$ and a small $\varepsilon\!>0$ such that
  \[
    {\rm dist}\big((\mathcal{X},\lambda,\mathcal{Y}),\Omega)
    \le \|(\mathcal{X},\lambda,\mathcal{Y})-(\overline{\mathcal{X}},\overline{\lambda},\overline{\mathcal{Y}})\|
    \le \vartheta\|\Psi(\mathcal{X},\lambda,\mathcal{Y})\|\quad \forall(\mathcal{X},\lambda,\mathcal{Y})\in\mathbb{B}(\overline{\mathcal{W}},\varepsilon),
  \]
  which provides a local error bound for estimating the distance from any points
  in the neighborhood of $\overline{\mathcal{W}}$ to $\Omega$, and the bound is only
  related to the KKT residual $\|\Psi(\mathcal{X},\lambda,\mathcal{Y})\|$.

  \medskip
  \noindent
  {\bf(c)} By the definitions of $\widetilde{\Psi}$ and $\Psi$,
  the multifunction $\widetilde{J}$ can be equivalently written as
  \begin{equation}\label{WJmap-equiv}
  \widetilde{\mathcal{J}}(\delta)=\big\{(\mathcal{X},\lambda,\mathcal{Y})\in \mathbb{X}\times\mathbb{R}^{p}\times\mathbb{X}
  \ |\ \Psi\big(\mathcal{X},\lambda,\mathcal{Y}\big)\in \delta+\mathcal{N}_{\mathbb{X}\times\mathbb{R}^p\times K^{\circ}}(\mathcal{X},\lambda,\mathcal{Y})\big\}.
 \end{equation}
  Thus, by equation \eqref{WJmap-equiv} and Lemma \ref{equiv-chara-LUL},
  the locally upper Lipschitz of $\widetilde{\mathcal{J}}$ at the origin for
  $\overline{\mathcal{W}}=(\overline{\mathcal{X}},\overline{\lambda},\overline{\mathcal{Y}})$
  implies that there exist a constant $\vartheta\ge 0$ and a small $\varepsilon>0$ such that
  \[
    {\rm dist}\big((\mathcal{X},\lambda,\mathcal{Y}),\Omega)
    \le \vartheta(\|\Psi(\mathcal{X},\lambda,\mathcal{Y})\|+\|\Pi_{K}(\mathcal{Y})\|)
    \quad\forall (\mathcal{X},\lambda,\mathcal{Y})\in\mathbb{B}(\overline{\mathcal{W}},\varepsilon).
  \]
  \end{remark}

  \bigskip
  \noindent
  {\bf\large Appendix A}
  \noindent
 \begin{alemma}\label{equiv-LUL}
  Let $(\overline{y},\overline{z})\in{\rm gph}\,\mathcal{S}$ for a multifunction
  $\mathcal{S}\!:\mathbb{Y}\rightrightarrows \mathbb{Z}$. Then,
  the following two statements are equivalent:
  \begin{description}
   \item[(a)] there exist a constant $\mu\ge 0$ and neighborhoods
              $\mathcal{U}$ of $\overline{y}$ and $\mathcal{V}$ of $\overline{z}$ such that
              \[
                \mathcal{V}\cap\mathcal{S}(\overline{y})=\{\overline{z}\}\ \ {\rm and}\ \
                \mathcal{S}(y)\cap \mathcal{V} \subset \{\overline{z}\} + \mu\|y-\overline{y}\|\mathbb{B}_{\mathbb{Y}}
                \ \ {\rm for\ all}\ y\in \mathcal{U};
              \]

   \item[(b)] there exist a constant $\mu'\ge 0$ and a neighborhood $\mathcal{V}$ of $\overline{z}$ such that
              \[
                \mathcal{V}\cap\mathcal{S}(\overline{y})=\{\overline{z}\}\ \ {\rm and}\ \
                \mathcal{S}(y)\cap \mathcal{V} \subset \{\overline{z}\} + \mu'\|y-\overline{y}\|\mathbb{B}_{\mathbb{Y}}
                \ \ {\rm for\ all}\ y\in \mathbb{Y}.
              \]
  \end{description}
 \end{alemma}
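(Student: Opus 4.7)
The proof plan is straightforward since the implication (b)$\Rightarrow$(a) is immediate: take $\mathcal{U}=\mathbb{Y}$ (or any neighborhood of $\overline{y}$), keep the same $\mathcal{V}$, and set $\mu=\mu'$. Both conditions in (a) are then inherited verbatim from (b). So all the content is in (a)$\Rightarrow$(b), and the strategy is to absorb the ``far away'' behavior into a possibly larger constant, after first shrinking $\mathcal{V}$ to something bounded.

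For (a)$\Rightarrow$(b), I would start by invoking (a) to obtain neighborhoods $\mathcal{U}$ of $\overline{y}$ and $\mathcal{V}$ of $\overline{z}$ together with a constant $\mu\ge 0$. Because $\mathcal{V}$ is merely a neighborhood, it need not be bounded, so I would first replace it by $\mathcal{V}':=\mathcal{V}\cap\mathbb{B}(\overline{z},s)$ for some $s>0$; this $\mathcal{V}'$ is still a neighborhood of $\overline{z}$ contained in $\mathcal{V}$, has finite diameter $M:=\sup_{z\in\mathcal{V}'}\|z-\overline{z}\|\le s$, and still satisfies $\mathcal{V}'\cap\mathcal{S}(\overline{y})=\{\overline{z}\}$ (since $\overline{z}\in\mathcal{V}'$ and $\mathcal{V}'\subset\mathcal{V}$). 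Next, choose $r>0$ small enough that $\mathbb{B}(\overline{y},r)\subseteq\mathcal{U}$, which is possible since $\mathcal{U}$ is a neighborhood of $\overline{y}$.

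The crux is then to define $\mu':=\max\{\mu,M/r\}$ and verify that $\mathcal{V}'$ and $\mu'$ work for (b) on all of $\mathbb{Y}$. Split cases on an arbitrary $y\in\mathbb{Y}$: if $y\in\mathbb{B}(\overline{y},r)\subseteq\mathcal{U}$, then (a) already yields $\mathcal{S}(y)\cap\mathcal{V}'\subseteq\mathcal{S}(y)\cap\mathcal{V}\subseteq\{\overline{z}\}+\mu\|y-\overline{y}\|\mathbb{B}_{\mathbb{Y}}\subseteq\{\overline{z}\}+\mu'\|y-\overline{y}\|\mathbb{B}_{\mathbb{Y}}$; if $y\notin\mathbb{B}(\overline{y},r)$, then $\|y-\overline{y}\|\ge r$, and any $z\in\mathcal{S}(y)\cap\mathcal{V}'$ satisfies
\[
 \|z-\overline{z}\|\le M=\frac{M}{r}\cdot r\le \frac{M}{r}\|y-\overline{y}\|\le\mu'\|y-\overline{y}\|,
\]
so $z\in\{\overline{z}\}+\mu'\|y-\overline{y}\|\mathbb{B}_{\mathbb{Y}}$ as required.

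I do not anticipate any real obstacle; the argument is a routine neighborhood-shrinking and constant-inflation trick, and the only thing to watch carefully is that the new neighborhood $\mathcal{V}'$ inherits the isolation condition $\mathcal{V}'\cap\mathcal{S}(\overline{y})=\{\overline{z}\}$ from $\mathcal{V}$, which it does automatically from $\mathcal{V}'\subseteq\mathcal{V}$ and $\overline{z}\in\mathcal{V}'$. This completes the proof of both directions.
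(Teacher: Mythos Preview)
Your proof is correct and follows essentially the same approach as the paper: shrink $\mathcal{V}$ to a bounded neighborhood, then for $y$ close to $\overline{y}$ use (a), and for $y$ far from $\overline{y}$ use the boundedness of the new $\mathcal{V}'$ together with the lower bound on $\|y-\overline{y}\|$. Your version is in fact slightly cleaner than the paper's, since by introducing the single constant $\mu'=\max\{\mu,M/r\}$ you avoid the paper's case split on whether the radius of $\mathcal{V}$ exceeds that of $\mathcal{U}$.
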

 \begin{proof}
  Clearly, (b) implies (a). It suffices to argue that (a) implies (b). For this purpose,
  we assume that (a) holds for neighborhoods $\mathcal{U}=\mathbb{B}(\overline{y},\delta)$ and
  $\mathcal{V}=\mathbb{B}(\overline{z},\varepsilon)$ with $\delta>0$ and $\varepsilon>0$.
  We proceed the arguments by the two cases as shown below.

  \medskip
  \noindent
  {\bf Case 1: $\varepsilon>\delta$.} We show that (b) holds for
  $\mathcal{V}'=\mathbb{B}(\overline{z},\varepsilon')$ with $\varepsilon'=\frac{\delta}{2}$.
  Since $\varepsilon'<\varepsilon$, it is clear that $\mathcal{V}'\cap\mathcal{S}(\overline{y})=\{\overline{z}\}$.
  Moreover, since part (a) holds, it immediately follows that
  \[
    \mathcal{S}(y)\cap \mathbb{B}(\overline{z},\varepsilon')\subset \{\overline{z}\}
     + \mu\|y-\overline{y}\|\mathbb{B}_{\mathbb{Y}}\ \ {\rm when}\
    y\in\mathbb{B}(\overline{y},\delta).
  \]
  In addition, by noting that $\mathbb{B}(\overline{z},\varepsilon')\subset \{\overline{z}\}+\frac{3}{4}\delta\mathbb{B}_{\mathbb{Y}}$,
  we also have
  \[
     \mathcal{S}(y)\cap \mathbb{B}(\overline{z},\varepsilon')\subset\mathbb{B}(\overline{z},\varepsilon')
     \subset \{\overline{z}\}+\|y-\overline{y}\|\mathbb{B}_{\mathbb{Y}}
     \ \ {\rm when}\ y\notin\mathbb{B}(\overline{y},\delta)
  \]
  Thus, part (b) directly follows from the last two equations with $\mu'=\max(\mu,1)$.

   \medskip
  \noindent
  {\bf Case 2: $\varepsilon\le\delta$.} Using the same arguments as for Case 1 can verify that
  (b) holds for $\mathcal{V}'=\mathbb{B}(\overline{z},\varepsilon')$ with $\varepsilon'={\varepsilon}/{2}$.
  Here, we omit the details for simplicity.
 \end{proof}

  \bigskip

  \medskip
  \noindent
  {\bf\large Appendix B}

  \medskip
  \noindent
  This part includes some lemmas used for the proof of Proposition \ref{main-prop}
  and the proof of Proposition \ref{main-prop}, which requires the mappings
  $\mathcal{G}\!:\mathbb{R}^{q\times q}\to\mathbb{S}^{q}$
  and $\mathcal{H}\!:\mathbb{R}^{q\times q}\to\mathbb{R}^{q\times q}$ as
  \begin{equation}\label{GH-operator}
    \mathcal{G}(Z):=(Z+Z^{\mathbb{T}})/2\ \ {\rm and}\ \
    \mathcal{H}(Z):=(Z-Z^{\mathbb{T}})/2\ \ {\rm for}\ \ Z\in\mathbb{R}^{q\times q},
  \end{equation}
  and the matrices $\mathcal{E}_1,\mathcal{E}_2\in\!\mathbb{R}^{m\times m}$ and
  $\mathcal{F}\!\in\mathbb{R}^{m\times(n-m)}$ associated to the given
  $\overline{X},X\in\!\mathbb{R}^{m\times n}$:
  \begin{align}\label{ME1}
   \big(\mathcal{E}_1\big)_{ij}
   :=\!\left\{\begin{array}{cl}
       \!\frac{\sigma_i(\overline{X})-\sigma_j(\overline{X})}{\sigma_i(X)-\sigma_j(X)} & {\rm if}\ \sigma_i(X)\ne \sigma_j(X),\\
        0 & {\rm otherwise},
     \end{array}\right.\ {\rm for}\ i,j\in\{1,\ldots,m\},\quad\\
    \big(\mathcal{E}_2\big)_{ij}
   :=\left\{\begin{array}{cl}
       \frac{\sigma_i(\overline{X})+\sigma_j(\overline{X})}{\sigma_i(X)+\sigma_j(X)} & {\rm if}\ \sigma_i(X)\!+\!\sigma_j(X)\ne 0,\\
        0 & {\rm otherwise},
     \end{array}\right.\ \ {\rm for}\ i,j\in\{1,\ldots,m\},\\
     \label{ME2}
   \big(\mathcal{F}\big)_{ij}
   :=\left\{\begin{array}{cl}
       \frac{\sigma_i(\overline{X})}{\sigma_i(X)} & {\rm if}\ \sigma_i(X)\ne 0,\\
        0 & {\rm otherwise},
     \end{array}\right.\ {\rm for}\ i\in\{1,\ldots,m\},j\in\{1,\ldots,n\!-\!m\}.
  \end{align}
  Unless otherwise stated, in the sequel, when $X$ has the SVD as in \eqref{X-SVD},
  we write
  \[
    \widetilde{Z}=\overline{U}^{\mathbb{T}}Z\overline{V}\ \ {\rm and}\ \
    \widetilde{Z}_1=\overline{U}^{\mathbb{T}}Z\overline{V}_1\quad{\rm for\ any}\ Z\in\mathbb{R}^{m\times n}.
  \]

  Firstly, we recall from \cite[Lemma 3.15]{DingC10} a result on the projection of $(t,X)$ onto $K$.
  \begin{alemma}\label{k0k1-propery}
   Let $(t,X)\notin{\rm int}\,K\cup{\rm int}\,K^{\circ}$ be given.
   Write $(\overline{t},\overline{X})=\Pi_K(t,X)$ and $\overline{\sigma}=\sigma(\overline{X})$.
   \begin{enumerate}
     \item[(i)] If $\overline{\sigma}_k>0$, then there exist $\theta>0$ and $\overline{u}\in\mathbb{R}_{+}^m$
               such that $\overline{\sigma}=\sigma(X)-\theta\overline{u}$ with
               $\overline{u}_i=1$ for $i=1,\ldots,k_0$,
               $1\ge\overline{u}_{k_0+1}\ge\ldots\ge\overline{u}_{k_1}\ge 0$
               with $\sum_{i=k_0+1}^{k_1}\overline{u}_{i}={k}-k_0$,
               and $\overline{u}_i=0$ for $i=k_1\!+\!1,\ldots,m$, where $k_0\in[0,k\!-\!1]$ and
               $k_1\in[k,m]$ are integers such that
              \begin{equation}\label{sigmabar-case1}
                \overline{\sigma}_1\ge\cdots\ge\overline{\sigma}_{k_0}\!>\overline{\sigma}_{k_0+1}=\!\cdots=\!\overline{\sigma}_{k}
                =\cdots=\overline{\sigma}_{k_1}:=\overline{\nu}>\overline{\sigma}_{k_1+1}\!\ge\cdots\ge\overline{\sigma}_m\!\ge 0.
              \end{equation}
              In the subsequent discussion, for this case we always write
              \begin{equation}\label{abg-case1}
               \alpha\!:=\!\{1,\ldots,k_0\},\,\beta\!:=\!\{k_0\!+\!1,\ldots,k_1\},\,
               \gamma\!:=\!\{k_1\!+\!1,\ldots,m\},\overline{\gamma}=\{1,\ldots,m\}\backslash \gamma.
              \end{equation}

    \item[(ii)] If $\overline{\sigma}_k=0$, then there exist $\theta>0$ and $\overline{u}\in\mathbb{R}_{+}^m$ such that
                $\overline{\sigma}=\sigma(X)-\theta\overline{u}$ with $\overline{u}_i=1$ for $i=1,\ldots,k_0$
                and $1\ge\overline{u}_{k_0+1}\ge \overline{u}_{k_0+2}\ge\ldots\ge\overline{u}_{m}\ge 0$ with
                $\sum_{i=k_0+1}^{m}\overline{u}_{i}\le k-k_0$, where $0\le k_0\le k\!-\!1$ is an integer such that
               \begin{equation}\label{sigmabar-case2}
                 \overline{\sigma}_1\ge\ldots\ge\overline{\sigma}_{k_0}>\overline{\sigma}_{k_0+1}=\ldots=\overline{\sigma}_k=\ldots=\overline{\sigma}_{m}=0.
               \end{equation}
               In the subsequent discussion, for this case we always write
              \begin{equation}\label{abg-case2}
                \alpha\!:=\{1,2,\ldots,k_0\}\ \ {\rm and}\ \ \beta\!:=\{k_0\!+\!1,k_0\!+\!2,\ldots,m\}.
              \end{equation}
   \end{enumerate}
  Also, for the two cases we sometimes use the partition for $\beta:=\beta_1\cup\beta_2\cup\beta_3$ with
  \[
  \beta_1:=\{i\in\beta\ |\ \overline{u}_i=1\},\,\beta_2:=\{i\in\beta\ |\ \overline{u}_i\in(0,1)\}
  \ {\rm and}\ \beta_3:=\{i\in\beta\ |\ \overline{u}_i=0\}.
  \]
  \end{alemma}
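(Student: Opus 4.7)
The plan is to reduce the matrix projection onto $K$ to a vector projection onto the corresponding Ky Fan $k$-norm epigraph, then read off the structure of $\overline{u}$ directly from the KKT system.

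First I would exploit the unitary invariance of the set $K$: for any $(U,V)\in\mathbb{O}^m\times\mathbb{O}^n$ one has $(t,UXV^{\mathbb{T}})\in K$ iff $(t,X)\in K$, and the Frobenius norm on $\mathbb{X}$ is invariant under this action. From this one concludes that if $X$ admits the SVD $X=\overline{U}[{\rm Diag}(\sigma(X))\ \ 0]\overline{V}^{\mathbb{T}}$, then $\Pi_K(t,X)=(\overline{t},\overline{U}[{\rm Diag}(\overline{\sigma})\ \ 0]\overline{V}^{\mathbb{T}})$ and the pair $(\overline{t},\overline{\sigma})$ is precisely the projection of $(t,\sigma(X))$ onto the vector epigraph $K_v:=\{(\tau,s)\in\mathbb{R}\times\mathbb{R}^m\,|\,s_1\ge\cdots\ge s_m\ge 0,\ \sum_{i=1}^k s_i\le\tau\}$.

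Next I would write the KKT conditions for the convex program $\min\tfrac12(\tau-t)^2+\tfrac12\|s-\sigma(X)\|^2$ over $K_v$, introducing a multiplier $\theta\ge 0$ for the Ky Fan constraint and multipliers for the monotonicity/nonnegativity constraints. Stationarity in $\tau$ gives $\overline{t}=t+\theta$, while stationarity in $s$ gives $\overline{\sigma}=\sigma(X)-\theta\overline{u}$ with $\overline{u}$ belonging to the subdifferential of the vector Ky Fan $k$-norm at $\overline{\sigma}$ (modified by the nonnegativity multipliers when some $\overline{\sigma}_i=0$). The hypothesis $(t,X)\notin{\rm int}\,K\cup{\rm int}\,K^\circ$ guarantees that the projection is nontrivial so that $\theta\ge 0$ is meaningful.

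For case (i), where $\overline{\sigma}_k>0$, the nonnegativity constraint is inactive in the relevant range, and the well-known formula for the subgradient of the vector Ky Fan $k$-norm at a sorted nonnegative vector yields exactly the required partition: coordinates in $\alpha$ (those strictly above the level $\overline{\nu}$ containing the $k$-th entry) give $\overline{u}_i=1$, coordinates in $\gamma$ (those strictly below $\overline{\nu}$) give $\overline{u}_i=0$, and coordinates in $\beta$ give $\overline{u}_i\in[0,1]$ summing to $k-k_0$; the monotonicity of $\overline{\sigma}$ on $\beta$, where all entries coincide with $\overline{\nu}$, imposes $\overline{u}_{k_0+1}\ge\cdots\ge\overline{u}_{k_1}$ through the KKT multipliers for $\overline{\sigma}_i\ge\overline{\sigma}_{i+1}$. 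For case (ii), where $\overline{\sigma}_k=0$, the active nonnegativity constraints on the tail contribute extra nonnegative multipliers that are subtracted from the unit upper bound on those coordinates; this is what replaces the equality $\sum_{i\in\beta}\overline{u}_i=k-k_0$ by the inequality $\sum_{i=k_0+1}^m\overline{u}_i\le k-k_0$ and still preserves the nonincreasing order of $\overline{u}$ over $\beta$.

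The main obstacle is the careful bookkeeping in case (ii): one must track how the activation of $\overline{\sigma}_i=0$ reallocates the Ky Fan multiplier among the tail coordinates while still maintaining the monotone, box-constrained shape of $\overline{u}$ and the correct total mass. This is handled by introducing the subpartition $\beta=\beta_1\cup\beta_2\cup\beta_3$ according to whether $\overline{u}_i$ equals $1$, lies in $(0,1)$, or equals $0$, and verifying by complementary slackness that this partition is consistent with the ordering of the nonnegativity multipliers. Once this is done, the stated formula $\overline{\sigma}=\sigma(X)-\theta\overline{u}$ together with the claimed structural properties of $\overline{u}$ follows directly from the KKT conditions.
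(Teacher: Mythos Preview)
The paper does not prove this lemma; it merely recalls it from \cite[Lemma~3.15]{DingC10}. Your reduction to the vector problem via unitary invariance followed by a KKT analysis is the natural route and is almost certainly what the cited reference does, so in that sense your approach is the expected one.

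That said, two points in your execution deserve tightening. First, your justification of the monotonicity $\overline{u}_{k_0+1}\ge\cdots\ge\overline{u}_{k_1}$ via ``KKT multipliers for $\overline{\sigma}_i\ge\overline{\sigma}_{i+1}$'' is circuitous: on $\beta$ those ordering constraints are all active with equality, so their multipliers impose nothing directly. The clean argument is to project onto the \emph{unordered} epigraph $\{(\tau,s):\|s\|_{(k)}\le\tau\}$, observe (by permutation symmetry and a rearrangement argument) that the projection of the already-sorted nonnegative vector $\sigma(X)$ remains sorted and nonnegative, and then read off $\overline{u}_i=(\sigma_i(X)-\overline{\nu})/\theta$ on $\beta$ in case~(i), respectively $\overline{u}_i=\sigma_i(X)/\theta$ on $\beta$ in case~(ii); monotonicity and nonnegativity of $\overline{u}$ then follow immediately from those of $\sigma(X)$. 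This also makes the inequality $\sum_{i\in\beta}\overline{u}_i\le k-k_0$ in case~(ii) transparent, since it comes straight from the $\ell_1$-bound in the dual ball of $\|\cdot\|_{(k)}$, without any bookkeeping of separate nonnegativity multipliers.

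Second, you only obtain $\theta\ge 0$, whereas the statement asserts $\theta>0$. Strict positivity follows once you note that $\theta=0$ forces $(\overline{t},\overline{\sigma})=(t,\sigma(X))$, hence $(t,X)\in K$; combined with $(t,X)\notin{\rm int}\,K$ this leaves only $(t,X)\in{\rm bd}\,K$, a boundary case in which one may take $\overline{u}$ to be any subgradient and the identity $\overline{\sigma}=\sigma(X)-\theta\overline{u}$ is vacuous. You should make explicit how you handle that boundary case (or note, as the paper's subsequent Lemma on $\Pi_K'$ does, that the formulas involving $\overline{u}$ are only used when $(t,X)\notin{\rm bd}\,K$).
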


  Next we recall from \cite[Proposition 3.16]{DingC10} the expression of
  the directional derivative of the projection operator $\Pi_K$ at
  $(t,X)\notin{\rm int}\,K\cup{\rm int}\,K^{\circ}$, which is stated as follows.
  \begin{alemma}\label{Lemma1-pdird}
   Let $(t,X)\notin{\rm int}\,K\cup{\rm int}\,K^{\circ}$ be given.
   Write $(\overline{t},\overline{X})=\Pi_K(t,X)$ and $\overline{\sigma}=\sigma(\overline{X})$.
   Suppose that $X$ has the SVD as in \eqref{X-SVD} with $a,b,c$ and
   $a_l\ (l=1,2,\ldots,r)$ given by \eqref{abc} and \eqref{al}.
   Then, the directional derivative of $\Pi_K$ at $(t,X)$ along
   $(\tau,H)\in\mathbb{R}\times\mathbb{R}^{m\times n}$ is
   \[
    \Pi_{K}'((t,X);(\tau,H))=\big(\Phi_0(\tau,\bm D(\widetilde{H})),
    \overline{U}\Xi(\tau,\bm D(\widetilde{H}))\overline{V}^{\mathbb{T}}\big)
  \]
  where $\Phi_0(\tau,\bm D(\widetilde{H}))$ and $\Xi(\tau,\bm D(\widetilde{H}))$ will be stated by
   $\overline{\sigma}_k>0$ and $\overline{\sigma}_k=0$, respectively.

  \medskip
  \noindent
  {\bf Case 1:} $\overline{\sigma}_k>0$. Let $r_0,r_1\!\in\{1,\ldots,r\}$ be such that
  $\alpha=\bigcup_{l=1}^{r_0}a_l,\beta=\bigcup_{l=r_0+1}^{r_1}a_l$ and $\gamma=\bigcup_{l=r_1+1}^{r+1}a_l$,
  where $\alpha,\beta,\gamma$ and $\overline{\gamma}$ are defined by \eqref{abg-case1} with
  integers $k_0\in[0,k\!-\!1]$ and $k_1\in[k,m]$ such that equation \eqref{sigmabar-case1} holds.
  Write $\mathbb{W}:=\mathbb{R}\times\mathbb{S}^{|a_1|}\times\mathbb{S}^{|a_2|}\times\cdots\times\mathbb{S}^{|a_{r_1}|}$.
  For any $(\zeta,W)\in\mathbb{W}$ with $W=[W_1\ W_2\ \cdots\ W_{r_1}]\in \mathbb{S}^{|a_1|}\times\mathbb{S}^{|a_2|}\times\cdots\times\mathbb{S}^{|a_{r_1}|}$,
  we define
  \[
    \Phi_0(\zeta,W):=\!\phi_0(\zeta,{\bm\kappa}(W))\in\mathbb{R}\ \ {\rm and}\ \
    \Phi_l(\zeta,W):=\!R_l{\rm Diag}(\phi_l(\zeta,{\bm\kappa}(W)))R_l^{\mathbb{T}}\in\mathbb{S}^{|a_l|}
  \]
  with $R_l\in\mathbb{O}^{|a_l|}(W_l)$ for $l=1,\ldots,r_1$, where
  $(\phi_0(\zeta,\bm{\kappa}(W)),\phi_1(\zeta,\bm{\kappa}(W)),\ldots,\phi_{r_1}(\zeta,\bm{\kappa}(W)))$
  with $\bm{\kappa}(W):=(\lambda(W_1),\lambda(W_2),\ldots,\lambda(W_{r_1}))\in\mathbb{R}^{k_1}$
  is the unique solution of the problem
  \begin{align*}
   &\min\ \frac{1}{2}\big[(\eta-\zeta)^2+\|d-\bm{\kappa}(W)\|^2\big]\\
   &\ {\rm s.t.}\ \langle e_{\alpha},d_{\alpha}\rangle+s_{(k-k_0)}(d_{\beta})\le \eta
  \end{align*}
  if $(t,X)\in{\rm bd}K$, and otherwise is the unique optimal solution of the problem
  \begin{align*}
   &\min\ \frac{1}{2}\big[(\eta-\zeta)^2+\|d-\bm{\kappa}(W)\|^2\big]\\
   &\ {\rm s.t.}\ \langle e_{\alpha},d_{\alpha}\rangle+s_{(k-k_0)}(d_{\beta})\le \eta\\
   &\qquad \langle e_{\alpha},d_{\alpha}\rangle+\langle\overline{u}_{\beta},d_{\beta}\rangle=\eta.
  \end{align*}
  Here, $s_{(k-k_0)}\!:\mathbb{R}^{|\beta|}\to\mathbb{R}$ is defined by
  \(
    s_{(k-k_0)}(z):={\textstyle\sum_{i=1}^{k-k_0}}z_{i}^{\downarrow}
  \)
  for $z\in\mathbb{R}^{|\beta|}$. Then,
  \begin{equation}\label{Xi-Case1}
   \Xi(\tau,\bm D(\widetilde{H}))
   \!:=\!\bm T(\widetilde{H})
   +\!\left[\begin{matrix}
    \Phi_1\big(\tau,\bm D(\widetilde{H})\big) & 0 & 0 & 0 & 0\\
    0 & \ddots & 0 & 0 & 0\\
    0 & 0 &\Phi_{r_1}\big(\tau,\bm D(\widetilde{H})\big)& 0 &0\\
    0 & 0 &0 &\mathcal{G}(\widetilde{H}_{\gamma\gamma})
    & \left(\begin{matrix}
        0\\ \widetilde{H}_{bc}
      \end{matrix}\right)\\
    \end{matrix}\right]
  \end{equation}
  where $\bm D(Z)$ and $\bm T(Z)$ for any $Z=[Z_1\ \ Z_2]\!\in\mathbb{R}^{m\times n}$
  with $Z_1\in\mathbb{R}^{m\times m}$ are defined by
  \begin{equation}\label{MD-Case1}
   \bm D(Z):=\big(\mathcal{G}(Z_{a_1a_1}),\mathcal{G}(Z_{a_2a_2}),\ldots,\mathcal{G}(Z_{a_{r_1}a_{r_1}})\big)
  \end{equation}
  and
  \begin{equation}\label{MTZ-Case1}
   \bm T(Z):=\big[\mathcal{E}_1\circ\mathcal{G}(Z_1)+\mathcal{E}_2\circ\mathcal{H}(Z_1)\ \
             \mathcal{F}_{\overline{\gamma}\cup\gamma,c}\circ Z_{\overline{\gamma}\cup\gamma,c}\big].
  \end{equation}
  Here ``$\circ$'' means the Hardmard product. In particular, from \cite[Page 127]{DingC10}, we know that
  \begin{equation}\label{Proj-C1set}
   \Pi_{\mathcal{C}_1}(\tau,\bm D(\widetilde{H}))
   =\big(\Phi_0(\tau,\bm D(\widetilde{H})),\Phi_1(\tau,\bm D(\widetilde{H})),\ldots,
    \Phi_{r_1}(\tau,\bm D(\widetilde{H}))\big),
  \end{equation}
  where $\mathcal{C}_1\subseteq\mathbb{W}$ is a closed convex cone which, if $(t,X)\in{\rm bd}\,K$,
  has the form
  \begin{equation*}
   \mathcal{C}_1=\Big\{(\zeta,W)\in\mathbb{W}\ |\ {\textstyle\sum_{l=1}^{r_0}}{\rm tr}(W_l)+s_{(k-k_0)}(\bm{\kappa}_{\beta}(W))\le\zeta\Big\},
  \end{equation*}
  and otherwise takes the following form
 \begin{equation}\label{C1-cone}
  \mathcal{C}_1=\Big\{(\zeta,W)\!\in\!\mathbb{W}\ | \sum_{l=1}^{r_0}{\rm tr}(W_l)\!+\!s_{(k-k_0)}(\bm{\kappa}_{\beta}(W))\!\le\zeta,
  \sum_{l=1}^{r_0}{\rm tr}(W_l)\!+\!\langle \overline{u}_\beta,\bm{\kappa}_{\beta}(W)\rangle\!=\!\zeta\Big\}.
 \end{equation}

  \noindent
  {\bf Case 2:} $\overline{\sigma}_k=0$. Let $r_0\in\{1,2,\ldots,r\}$ be such that
  $\alpha=\bigcup_{l=1}^{r_0}a_l$ and $\beta=\bigcup_{l=r_0+1}^{r+1}a_l$,
  where $\alpha$ and $\beta$ are defined by \eqref{abg-case2} with integer $k_0\in[0,k-1]$
  such that \eqref{sigmabar-case2} holds.  Let
  \(
   \mathbb{W}:=\mathbb{R}\times\mathbb{S}^{|a_1|}\times\cdots\times\mathbb{S}^{|a_{r}|}\times\mathbb{R}^{|b|\times(|b|+|c|)}.
  \)
  For any $(\zeta,W)$ with $W=[W_1\ \cdots\ W_{r}\ \ W_{r+1}]$ $ \in\mathbb{S}^{|a_1|}\times\cdots\times\mathbb{S}^{|a_{r}|}\times\mathbb{R}^{|b|\times(|b|+|c|)}$,
  we define $\Phi_0(\zeta,W):=\phi_0(\zeta,{\bm\kappa}(W))\in\mathbb{R}$,
  $\Phi_l(\zeta,W):=R_l{\rm Diag}(\phi_l(\zeta,\bm{\kappa}(W)))R_l^{\mathbb{T}}\in\mathbb{S}^{|a_l|}$
  with $R_l\in\mathbb{O}^{|a_l|}(W_l)$ for $l=1,2,\ldots,r$, and
  $\Phi_{r+1}(\zeta,W):=U[{\rm Diag}(\phi_{r+1}(\zeta,\bm{\kappa}(W)))\ \ 0]V^{\mathbb{T}}\in\mathbb{R}^{|b|\times(|b|+|c|)}$
  with $(U,V)\in\mathbb{O}^{|b|,(|b|+|c|)}(W_{r+1})$, where
  \(
   (\phi_0(\zeta,\bm{\kappa}(W)),\phi_1(\zeta,\bm{\kappa}(W)),\ldots,\phi_{r+1}(\zeta,\bm{\kappa}(W)))
  \)
  with $\bm{\kappa}(W)\!=(\lambda(W_1),\ldots,\lambda(W_{r}),\sigma(W_{r+1}))$
  is the unique optimal solution of the following convex minimization problem
  \begin{align*}
   &\min\ \frac{1}{2}\big[(\eta-\zeta)^2+\|d-\bm{\kappa}(W)\|^2\big]\\
   &\ {\rm s.t.}\ \langle e_{\alpha},d_{\alpha}\rangle+\|d_{\beta}\|_{(k-k_0)}\le \eta
  \end{align*}
  if $(t,X)\in{\rm bd}K$, and otherwise is the unique optimal solution of the convex problem
  \begin{align*}
   &\min\ \frac{1}{2}\big[(\eta-\zeta)^2+\|d-\bm{\kappa}(W)\|^2\big]\\
   &\ {\rm s.t.}\ \langle e_{\alpha},d_{\alpha}\rangle+\|d_{\beta}\|_{(k-k_0)}\le \eta\\
   &\qquad \langle e_{\alpha},d_{\alpha}\rangle+\langle\overline{u}_{\beta},d_{\beta}\rangle=\eta.
  \end{align*}
  Here, $\|\cdot\|_{(k-k_0)}\!:\mathbb{R}^{|\beta|}\to\mathbb{R}$ is defined by
  $\|z\|_{(k-k_0)}:={\textstyle\sum_{i=1}^{k-k_0}}|z|_{i}^{\downarrow}$ for $z\in\mathbb{R}^{|\beta|}$.
  Then,
  \begin{equation}\label{Xi-Case2}
   \Xi(\tau,\bm D(\widetilde{H}))
   :=\bm T(\widetilde{H})
   +\left[\begin{matrix}
    \Phi_1\big(\tau,\bm D(\widetilde{H})\big) & 0 & 0 & 0\\
    0 & \ddots & 0 & 0 \\
    0 & 0 &\Phi_{r}\big(\tau,\bm D(\widetilde{H})\big)& 0\\
    0 & 0 &0 &\Phi_{r+1}\big(\tau,\bm D(\widetilde{H})\big)
    \end{matrix}\right]
  \end{equation}
  where $\bm D(Z)$ and $\bm T(Z)$  for any $Z=[Z_1\ \ Z_2]\in\mathbb{R}^{m\times n}$
  with $Z_1\in\mathbb{R}^{m\times m}$ are given by
  \begin{equation}\label{MD-Case2}
  \bm D(Z):=\big(\mathcal{G}(Z_{a_1a_1}),\mathcal{G}(Z_{a_2a_2}),\ldots,\mathcal{G}(Z_{a_{r}a_{r}}),[Z_{bb}\ \ Z_{bc}]\big)
  \end{equation}
  and
  \begin{equation}\label{MTZ-Case2}
   \bm T(Z):=\big[\mathcal{E}_1\circ\mathcal{G}(Z_1)+\mathcal{E}_2\circ\mathcal{H}(Z_1)\ \
             \mathcal{F}_{\alpha\cup\beta,c}\circ Z_{\alpha\cup\beta,c}\big].
  \end{equation}
  Here ``$\circ$'' means the Hardmard product.
  In particular, from \cite[Page 129]{DingC10}, we know that
  \begin{equation}\label{Proj-C2set}
   \Pi_{\mathcal{C}_2}(\tau,\bm D(\widetilde{H}))=
   \big(\Phi_{0}(\tau,\bm D(\widetilde{H})),\Phi_{1}(\tau,\bm D(\widetilde{H})),\ldots,
   \Phi_{r+1}(\tau,\bm D(\widetilde{H}))\big),
  \end{equation}
  where $\mathcal{C}_2\subseteq \mathbb{W}$ is a closed convex cone which, if $(t,X)\in{\rm bd}\,K$, has the form
 \begin{equation*}
   \mathcal{C}_2:=\Big\{(\zeta,W)\in\mathbb{W}\ |\ \sum_{l=1}^{r_0}{\rm tr}(W_l)+\|{\bm \kappa}_{\beta}(W)\|_{(k-k_0)}\le\zeta\Big\},
  \end{equation*}
  and otherwise takes the following form
 \begin{equation}\label{C2-cone}
  \mathcal{C}_2\!:=\!\Big\{(\zeta,W)\!\in\!\mathbb{W}\ | \sum_{l=1}^{r_0}{\rm tr}(W_l)\!+\|{\bm \kappa}_{\beta}(W)\|_{(k-k_0)}\le\zeta,
  \sum_{l=1}^{r_0}{\rm tr}(W_l)\!+\!\langle \overline{u}_\beta,{\bm\kappa}_{\beta}(W)\rangle\!=\!\zeta\Big\}.
  \end{equation}
 \end{alemma}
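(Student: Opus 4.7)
The plan is to reduce both locally upper Lipschitz statements, via Lemma \ref{SN-cond}, to showing that the graphical derivative of the respective multifunction at $(0,\overline{\mathcal{W}})$ maps $0$ to $\{0\}$. Since $\Pi_K$ is globally Lipschitz and Hadamard directionally differentiable, \cite[Proposition 2.47]{BS00} makes $\widetilde{\Psi}$ and $\Psi$ directionally differentiable, and the derivative $\widetilde{\Psi}'((0,\overline{\mathcal{W}});(0,\Delta_{\mathcal{X}},\Delta_\lambda,\Delta_{\mathcal{Y}}))$ can be written componentwise as a stationarity equation involving $\nabla^2_{\mathcal{X}\mathcal{X}}L$, the linearized equality $h'(\overline{\mathcal{X}})\Delta_{\mathcal{X}}=0$, and a projection identity. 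Setting
\[
  \mathscr{F}:=\{(\Delta_{\mathcal{X}},\Delta_\lambda,\Delta_{\mathcal{Y}}):\widetilde{\Psi}'((0,\overline{\mathcal{W}});(0,\Delta_{\mathcal{X}},\Delta_\lambda,\Delta_{\mathcal{Y}}))=0\},
\]
Lemma \ref{Graph-dir} gives $D\widetilde{\mathcal{J}}(0|\overline{\mathcal{W}})(0)\subseteq\mathscr{F}$, and the special affine-in-$\delta$ structure of $\Psi$ combined with the ``last part'' of Lemma \ref{Graph-dir} upgrades this to the equality $D\mathcal{J}(0|\overline{\mathcal{W}})(0)=\mathscr{F}$. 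Hence part (a) reduces to showing $\mathscr{F}=\{0\}$.

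For part (a), I take a putative nonzero $\Delta\in\mathscr{F}$ and unpack the projection identity
$\Pi_K'(G(\overline{\mathcal{X}})+\overline{\mathcal{Y}};G'(\overline{\mathcal{X}})\Delta_{\mathcal{X}}+\Delta_{\mathcal{Y}})=G'(\overline{\mathcal{X}})\Delta_{\mathcal{X}}$ via Lemma \ref{tcone-lemma}, which is equivalent to $(\Delta_{\mathcal{Y}},G'(\overline{\mathcal{X}})\Delta_{\mathcal{X}})\in{\rm gph}\,D\mathcal{N}_{K^\circ}(\overline{\mathcal{Y}}|G(\overline{\mathcal{X}}))$. Proposition \ref{main-prop} then yields three facts: (i) $G'(\overline{\mathcal{X}})\Delta_{\mathcal{X}}\in\mathcal{T}_K(G(\overline{\mathcal{X}}))\cap\overline{\mathcal{Y}}^\perp$; (ii) the pairing identity
$\langle\Delta_{\mathcal{Y}},G'(\overline{\mathcal{X}})\Delta_{\mathcal{X}}\rangle=-\Upsilon_{G(\overline{\mathcal{X}})}(\overline{\mathcal{Y}},G'(\overline{\mathcal{X}})\Delta_{\mathcal{X}})$; and (iii) the polar membership of $\Delta_{\mathcal{Y}}$ (used later). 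Combining (i) with $h'(\overline{\mathcal{X}})\Delta_{\mathcal{X}}=0$ and the KKT stationarity $L'_{\mathcal{X}}(\overline{\mathcal{W}})=0$ (which forces $\langle f'(\overline{\mathcal{X}}),\Delta_{\mathcal{X}}\rangle=0$), I place $\Delta_{\mathcal{X}}$ in the critical cone $\mathcal{C}(\overline{\mathcal{X}})$. Pairing the stationarity component of $\widetilde{\Psi}'=0$ with $\Delta_{\mathcal{X}}$ and substituting (ii) yields the quadratic identity
\[
  \langle\Delta_{\mathcal{X}},\nabla^2_{\mathcal{X}\mathcal{X}}L(\overline{\mathcal{W}})\Delta_{\mathcal{X}}\rangle-\Upsilon_{G(\overline{\mathcal{X}})}(\overline{\mathcal{Y}},G'(\overline{\mathcal{X}})\Delta_{\mathcal{X}})=0,
\]
so the second-order sufficient condition in Lemma \ref{SOSC} forces $\Delta_{\mathcal{X}}=0$. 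Re-inserting this in the stationarity component gives $[h'(\overline{\mathcal{X}})]^*\Delta_\lambda+[G'(\overline{\mathcal{X}})]^*\Delta_{\mathcal{Y}}=0$, and (iii) reads $\Delta_{\mathcal{Y}}\in[\mathcal{T}_K(G(\overline{\mathcal{X}}))\cap\overline{\mathcal{Y}}^\perp]^\circ$; taking polars in \eqref{SCQ} then concludes $(\Delta_\lambda,\Delta_{\mathcal{Y}})=0$, contradicting nontriviality.

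For part (b), I argue by contrapositive. If strict Robinson's CQ fails, polarity produces a nonzero $(\widehat{\lambda},\widehat{\mathcal{Y}})$ satisfying $[h'(\overline{\mathcal{X}})]^*\widehat{\lambda}+[G'(\overline{\mathcal{X}})]^*\widehat{\mathcal{Y}}=0$ and $\widehat{\mathcal{Y}}\in[\mathcal{T}_K(G(\overline{\mathcal{X}}))\cap\overline{\mathcal{Y}}^\perp]^\circ$. By \cite[Example 2.62]{BS00} this polar coincides with $\mathcal{T}_{\mathcal{N}_K(G(\overline{\mathcal{X}}))}(\overline{\mathcal{Y}})$, hence $(0,\widehat{\mathcal{Y}})\in\mathcal{T}_{{\rm gph}\,\mathcal{N}_K}(G(\overline{\mathcal{X}}),\overline{\mathcal{Y}})$, and Lemma \ref{tcone-lemma} delivers $\Pi_K'(G(\overline{\mathcal{X}})+\overline{\mathcal{Y}};\widehat{\mathcal{Y}})=0$. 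Assembling the pieces gives $\Psi'(\overline{\mathcal{W}};(0,\widehat{\lambda},\widehat{\mathcal{Y}}))=0$, and Lemma \ref{Graph-dir} with Lemma \ref{SN-cond} then contradicts the locally upper Lipschitz assumption on $\mathcal{J}$.

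The main obstacle is the bookkeeping in step (a): turning the projection identity into membership of the critical cone plus the sigma pairing identity, which is precisely what Proposition \ref{main-prop} is tailored to provide; once that proposition is invoked the SOSC/SRCQ contradiction is clean. A secondary point that needs care is justifying joint directional differentiability of $\widetilde{\Psi}$ in $(\delta,\mathcal{X},\lambda,\mathcal{Y})$ in order to legitimately apply Lemma \ref{Graph-dir} to $\widetilde{\mathcal{J}}$, but this follows from the smoothness of $f,h,G$ combined with the Hadamard directional differentiability of $\Pi_K$ and elementary chain rules.
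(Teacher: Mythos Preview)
Your proposal does not address the stated lemma at all. The statement you were asked to prove is Lemma~\ref{Lemma1-pdird}: an explicit formula for the directional derivative $\Pi_K'((t,X);(\tau,H))$ of the metric projection onto the Ky Fan $k$-norm cone, broken into the two cases $\overline{\sigma}_k>0$ and $\overline{\sigma}_k=0$. What you have written is instead a proof of the main Theorem in Section~\ref{sec3} (the locally upper Lipschitz property of $\mathcal{J}$ and $\widetilde{\mathcal{J}}$), with its parts (a) and (b). Nothing in your write-up establishes, or even attempts to establish, the claimed expressions for $\Phi_0,\Phi_l,\Xi,\bm D,\bm T$ or the projection identities \eqref{Proj-C1set}--\eqref{C2-cone}.

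For the record, the paper does not prove Lemma~\ref{Lemma1-pdird} either: it is explicitly recalled from \cite[Proposition~3.16]{DingC10} (see the sentence immediately preceding the lemma in Appendix~B), and the identifications \eqref{Proj-C1set} and \eqref{Proj-C2set} are cited from \cite[pp.~127,~129]{DingC10}. So the correct ``proof'' here is simply to point to that source; if a self-contained argument were required, it would proceed by perturbation analysis of the SVD of $X$ together with the explicit description of $\Pi_K$ from \cite{DingC10}, none of which appears in your proposal. As written, your argument is a reasonable sketch of the main Theorem's proof, but it is entirely off-target for the statement at hand.
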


 The following lemma provides a crucial result on the directional derivative of $\Pi_K$,
 which can also be found from the proof of \cite[Proposition 4.5]{DingC10}.
 \begin{alemma}\label{Lemma2-pdird}
  Let $(t,X)\notin{\rm int}\,K\cup{\rm int}\,K^{\circ}$ be given.
  Write $(\overline{t},\overline{X})=\Pi_K(t,X)$ and $\overline{\sigma}=\sigma(\overline{X})$.
  Let $X$ have the SVD as in \eqref{X-SVD} with $a,b,c$ and $a_l\ (l=1,2,\ldots,r)$
  given by \eqref{abc} and \eqref{al}.
  \begin{enumerate}
  \item [(i)] If $\overline{\sigma}_k>0$, then $\Pi_{K}'\big((t,X);(\Delta t,\Delta X)+(\Delta\zeta,\Delta\Gamma)\big)=(\Delta\zeta,\Delta\Gamma)$ if and only if
         \begin{subnumcases}{}
          \label{equa0-pdird-case1}
         \Delta\zeta=\!\Phi_0\big(\tau,\bm D(\Delta\widetilde{X}+\Delta\widetilde{\Gamma})\big);\\
         \label{equa1-pdird-case1}
         \mathcal{G}(\Delta\widetilde{\Gamma}_{a_la_l})= \Phi_{l}\big(\tau,\bm D(\Delta\widetilde{X}+\Delta\widetilde{\Gamma})\big),\ \ l=1,2,\ldots,r_1;\\
        \label{equa2-pdird-case1}
         [\mathcal{G}(\Delta\widetilde{X}_1)]_{a_la_{l'}}=0,\ \ l\ne l'\ {\rm and}\ l,l'=1,2,\ldots,r_0;\\
         \label{equa3-pdird-case1}
         [\mathcal{G}(\Delta\widetilde{\Gamma}_1)]_{a_la_{l'}}=0,\ \ l\ne l'\ {\rm and}\ l,l'=r_0\!+\!1,\ldots,r_1;
        \end{subnumcases}
        and
       \begin{subnumcases}{}
       \label{equa4-pdird-case1}
        [\mathcal{G}(\Delta\widetilde{\Gamma}_1)]_{\alpha\beta}-\big(\mathcal{E}_1\big)_{\alpha\beta}\circ [\mathcal{G}(\Delta\widetilde{\Gamma}_1)]_{\alpha\beta}=\big(\mathcal{E}_1\big)_{\alpha\beta}\circ [\mathcal{G}(\Delta\widetilde{X}_1)]_{\alpha\beta};\\
        \label{equa5-pdird-case1}
        [\mathcal{G}(\Delta\widetilde{\Gamma}_1)]_{\alpha\gamma}-\big(\mathcal{E}_1\big)_{\alpha\gamma}\circ [\mathcal{G}(\Delta\widetilde{\Gamma}_1)]_{\alpha\gamma}=\big(\mathcal{E}_1\big)_{\alpha\gamma}\circ [\mathcal{G}(\Delta\widetilde{X}_1)]_{\alpha\gamma};\\
        \label{equa6-pdird-case1}
        [\mathcal{G}(\Delta\widetilde{\Gamma}_1)]_{\beta\gamma}-\big(\mathcal{E}_1\big)_{\beta\gamma}\circ [\mathcal{G}(\Delta\widetilde{\Gamma}_1)]_{\beta\gamma}=\big(\mathcal{E}_1\big)_{\beta\gamma}\circ [\mathcal{G}(\Delta\widetilde{X}_1)]_{\beta\gamma};\\
         \label{equa7-pdird-case1}
        \mathcal{H}(\Delta\widetilde{\Gamma}_1)\!-\widehat{\mathcal{E}}_2\circ \mathcal{H}(\Delta\widetilde{\Gamma}_1)
         \!=\widehat{\mathcal{E}}_2\circ \mathcal{H}(\Delta\widetilde{X}_1)\ {\rm with}\
        \widehat{\mathcal{E}}_2=\!\left[\begin{matrix}
        \big(\mathcal{E}_2\big)_{\overline{\gamma}\overline{\gamma}} & \big(\mathcal{E}_2\big)_{\overline{\gamma}\gamma}\\
        \big(\mathcal{E}_2\big)_{\gamma\overline{\gamma}} & 0
       \end{matrix}\right]\!; \\
        \label{cequa1-pdird}
         \Delta\widetilde{\Gamma}_{\overline{\gamma}c}-\mathcal{F}_{\overline{\gamma}c}\circ\Delta\widetilde{\Gamma}_{\overline{\gamma}c}
         =\mathcal{F}_{\overline{\gamma}c}\circ\Delta\widetilde{X}_{\overline{\gamma}c};\\
        \label{cequa2-pdird}
        [\Delta\widetilde{X}_{\gamma\gamma}\ \ \Delta\widetilde{X}_{\gamma c}]=0.
      \end{subnumcases}

 \item [(ii)] If $\overline{\sigma}_k=0$, then $\Pi_{K}'\big((t,X);(\Delta t,\Delta X)+(\Delta\zeta,\Delta\Gamma)\big)=(\Delta\zeta,\Delta\Gamma)$ if and only if
         \begin{subnumcases}{}
          \label{equa0-pdird-case2}
         \Delta\zeta=\!\Phi_0\big(\tau,\bm D(\Delta\widetilde{X}+\Delta\widetilde{\Gamma})\big);\\
         \label{equa1-pdird-case2}
         \big[\Delta\widetilde{\Gamma}_{bb}\ \ \Delta\widetilde{\Gamma}_{bc}\big]
         =\Phi_{r+1}\big(\tau,\bm D(\Delta\widetilde{X}+\Delta\widetilde{\Gamma})\big); \\
         \label{equa2-pdird-case2}
         \mathcal{G}(\Delta\widetilde{\Gamma}_{a_la_l})= \Phi_{l}\big(\tau,\bm D(\Delta\widetilde{X}+\Delta\widetilde{\Gamma})\big),
          \ \ l=1,2,\ldots,r; \\
         \label{equa3-pdird-case2}
         [\mathcal{G}(\Delta\widetilde{X}_1)]_{a_la_{l'}}=0,\ \ l\ne l'\ {\rm and}\ l,l'=1,2,\ldots,r_0;\\
         \label{equa4-pdird-case2}
          \Delta\widetilde{\Gamma}_{a_la_{l'}}=0,\ \ l\ne l'\ {\rm and}\ l,l'=r_0\!+\!1,\ldots,r\!+\!1;\\
         \label{equa5-pdird-case2}
         [\mathcal{G}(\Delta\widetilde{\Gamma}_1)]_{\alpha\beta}-\big(\mathcal{E}_1\big)_{\alpha\beta}\circ [\mathcal{G}(\Delta\widetilde{\Gamma}_1)]_{\alpha\beta}=\big(\mathcal{E}_1\big)_{\alpha\beta}\circ [\mathcal{G}(\Delta\widetilde{X}_1)]_{\alpha\beta};\\
         \label{equa6-pdird-case2}
          \Delta\widetilde{\Gamma}_{a_lc}=0,\ \ l=r_0\!+\!1,\ldots,r
        \end{subnumcases}
        and
        \begin{subnumcases}{}
        \label{equa7-pdird-case2}
        \mathcal{H}(\Delta\widetilde{\Gamma}_{\alpha\alpha})-\big(\mathcal{E}_2\big)_{\alpha\alpha}\circ \mathcal{H}(\Delta\widetilde{\Gamma}_{\alpha\alpha})=\big(\mathcal{E}_2\big)_{\alpha\alpha}\circ \mathcal{H}(\Delta\widetilde{X}_{\alpha\alpha});\\
        \label{equa8-pdird-case2}
        [\mathcal{H}(\Delta\widetilde{\Gamma}_1)]_{\alpha\beta}-\big(\mathcal{E}_2\big)_{\alpha\beta}\circ [\mathcal{H}(\Delta\widetilde{\Gamma}_1)]_{\alpha\beta}=\big(\mathcal{E}_2\big)_{\alpha\beta}\circ [\mathcal{H}(\Delta\widetilde{X}_1)]_{\alpha\beta};\\
         \label{equa9-pdird-case2}
        \Delta\widetilde{\Gamma}_{\alpha c}-\mathcal{F}_{\alpha c}\circ\Delta\widetilde{\Gamma}_{\alpha c}
         =\mathcal{F}_{\alpha c}\circ\Delta\widetilde{X}_{\alpha c}.
       \end{subnumcases}
  \end{enumerate}
 \end{alemma}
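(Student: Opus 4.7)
The plan is to substitute the explicit formula for $\Pi_K'$ supplied by Lemma~\ref{Lemma1-pdird} into the fixed-point equation and match the two sides block by block. Writing $\widetilde H=\Delta\widetilde X+\Delta\widetilde\Gamma$ and $\tau=\Delta t+\Delta\zeta$, Lemma~\ref{Lemma1-pdird} gives
\[
\Pi_K'\bigl((t,X);(\Delta t+\Delta\zeta,\Delta X+\Delta\Gamma)\bigr)
=\bigl(\Phi_0(\tau,\bm D(\widetilde H)),\ \overline U\,\Xi(\tau,\bm D(\widetilde H))\,\overline V^{\mathbb T}\bigr),
\]
so imposing equality with $(\Delta\zeta,\Delta\Gamma)$ splits into the scalar identity $\Delta\zeta=\Phi_0(\tau,\bm D(\widetilde H))$---exactly \eqref{equa0-pdird-case1}/\eqref{equa0-pdird-case2}---and the matrix identity $\Delta\widetilde\Gamma=\Xi(\tau,\bm D(\widetilde H))$. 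The bulk of the proof is unpacking the latter in the row partition $\{1,\dots,m\}=\alpha\cup\beta\cup\gamma$ of Case~(i) (resp.\ $\alpha\cup\beta$ of Case~(ii)), refined by the $a_l$'s, together with the column partition into $\{1,\dots,m\}$ and $c$.

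For Case~(i), on each diagonal block $a_l\times a_l$ with $l\le r_1$ the contribution of $\bm T$ reduces, via \eqref{MTZ-Case1} and the fact that $\sigma(X)$ and $\overline\sigma$ are each constant on $a_l$, to the skew part $(\mathcal E_2)_{a_la_l}\circ\mathcal H(\widetilde H)_{a_la_l}$; adding the symmetric $\Phi_l(\tau,\bm D(\widetilde H))$ from \eqref{Xi-Case1} and taking symmetric parts isolates $\Phi_l$ and yields \eqref{equa1-pdird-case1}. On each off-diagonal pair $(a_l,a_{l'})$ only $\bm T$ contributes, and the constant value of $\overline u$ on each $a_l$ fixes the Hadamard kernels: on $\alpha\times\alpha$, $(\mathcal E_1)_{a_la_{l'}}=1$, so combining the $(a_l,a_{l'})$ equation with the transpose of $(a_{l'},a_l)$ gives \eqref{equa2-pdird-case1}; on $\beta\times\beta$, $\overline\sigma\equiv\overline\nu$ forces $(\mathcal E_1)_{a_la_{l'}}=0$, giving \eqref{equa3-pdird-case1}; on the mixed pairs the symmetric and antisymmetric projections of $\Delta\widetilde\Gamma_{a_la_{l'}}=\bm T_{a_la_{l'}}$ produce the Hadamard identities \eqref{equa4-pdird-case1}--\eqref{equa7-pdird-case1}, with the specific shape of $\widehat{\mathcal E}_2$ in \eqref{equa7-pdird-case1} reflecting that $(\mathcal E_2)_{\gamma\gamma}=1$ except on $b\times b$ where it vanishes. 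The extra summands $\mathcal G(\widetilde H_{\gamma\gamma})$ and $\widetilde H_{bc}$ in \eqref{Xi-Case1} absorb $\Delta\widetilde\Gamma_{\gamma\gamma}$ and $\Delta\widetilde\Gamma_{bc}$ on the $\gamma$-row block, forcing the residual $\Delta\widetilde X$ entries to vanish, which is \eqref{cequa2-pdird}; on $\overline\gamma\times c$ the $\mathcal F$-kernel of $\bm T$ yields \eqref{cequa1-pdird}.

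Case~(ii) proceeds identically with $\gamma$ replaced by $b$ and $\Phi_{r+1}$ now playing the role of $\mathcal G(\widetilde H_{\gamma\gamma})$: the rectangular $(b,b\cup c)$ slice of \eqref{Xi-Case2} matches the corresponding slice of $\Delta\widetilde\Gamma$, producing \eqref{equa1-pdird-case2}; the vanishing of $\mathcal E_1$, $\mathcal E_2$ and $\mathcal F$ on any block touching $\{i:\overline\sigma_i=0\}$ collapses the corresponding equations to \eqref{equa4-pdird-case2} and \eqref{equa6-pdird-case2}; and the remaining identities \eqref{equa5-pdird-case2}, \eqref{equa7-pdird-case2}--\eqref{equa9-pdird-case2} come from the same symmetric/antisymmetric splitting as in Case~(i). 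The converse direction in both cases is a direct verification: substituting the listed equations back into \eqref{Xi-Case1} or \eqref{Xi-Case2} reconstructs $\Xi(\tau,\bm D(\widetilde H))=\Delta\widetilde\Gamma$, and \eqref{equa0-pdird-case1}/\eqref{equa0-pdird-case2} restores the scalar component.

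The main obstacle will be the bookkeeping of the symmetric/antisymmetric decomposition on off-diagonal blocks: the kernels $\mathcal E_1$ and $\mathcal E_2$ couple the $(a_l,a_{l'})$ and $(a_{l'},a_l)$ equations in complementary ways, so each pair must be recombined into its $\mathcal G$-part (governed by $\mathcal E_1$) and $\mathcal H$-part (governed by $\mathcal E_2$) before being attributed to the correct listed equation. The most delicate step is tracking how the ``extra'' terms $\mathcal G(\widetilde H_{\gamma\gamma})$ and $\widetilde H_{bc}$ in \eqref{Xi-Case1}---which live outside $\bm T$---propagate to the forced vanishing of $\Delta\widetilde X_{\gamma\gamma}$ and $\Delta\widetilde X_{\gamma c}$ in \eqref{cequa2-pdird}; this requires a careful use of the sub-partition $\beta=\beta_1\cup\beta_2\cup\beta_3$ from Lemma~\ref{k0k1-propery} to locate precisely where $(\mathcal E_2)_{ij}$ equals~$1$ versus lies strictly in $(0,1)$, which determines whether $\Delta\widetilde\Gamma=\Xi$ imposes an identity or a nontrivial relation on the corresponding entries of $\Delta\widetilde X$.
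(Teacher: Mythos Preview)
Your plan is correct and mirrors the paper's proof: both start from Lemma~\ref{Lemma1-pdird}, reduce to the scalar identity and the matrix identity $\Delta\widetilde\Gamma=\Xi(\tau,\bm D(\widetilde H))$, then peel off the listed equations block by block via the symmetric/antisymmetric split (the paper records these as the two master equations $\mathcal H(\Delta\widetilde\Gamma_1)=\mathcal E_2\circ\mathcal H(\widetilde H_1)$ and $\mathcal G(\Delta\widetilde\Gamma_1)=\mathcal E_1\circ\mathcal G(\widetilde H_1)+\mathrm{diag}(\Phi_l)$, then reads off the cases from the constant values of $\mathcal E_1$ on $a_l\times a_{l'}$). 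One small over-anticipation: you will not need the refinement $\beta=\beta_1\cup\beta_2\cup\beta_3$ to derive \eqref{cequa2-pdird}; the relevant dichotomy is simply $\gamma\setminus b$ versus $b$ (where $\overline u$ vanishes on all of $\gamma$, so $(\mathcal E_2)_{\gamma\gamma}$ and $\mathcal F_{\gamma c}$ are determined by whether $\sigma_i(X)>0$), and the paper handles it in one line.
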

 \begin{proof}
  By Lemma \ref{Lemma1-pdird},
  $\Pi_{K}'\big((t,X);(\Delta t,\Delta X)+(\Delta\zeta,\Delta\Gamma)\big)=(\Delta\zeta,\Delta\Gamma)$
  if and only if
  \begin{equation}\label{equa-DetaXG}
    \Delta\zeta=\!\Phi_0\big(\tau,\bm D(\widetilde{H})\big)\ \ {\rm and}\ \
    \Delta\widetilde{\Gamma}=\Xi(\tau,\bm D(\widetilde{H}))\ \ {\rm with}\
    \tau=\Delta t\!+\!\Delta \zeta,\,H=\Delta X\!+\!\Delta\Gamma,
  \end{equation}
  where $\Xi(\tau,\bm D(\widetilde{H}))$ is given by equation \eqref{Xi-Case1} if $\overline{\sigma}_k>0$,
  and otherwise is given by \eqref{Xi-Case2}.

  \medskip
  \noindent
  {\bf(i)} It suffices to argue that the second equality of \eqref{equa-DetaXG} holds
  with $\Xi(\tau,\bm D(\widetilde{H}))$ given by \eqref{Xi-Case1} if and only if
  \eqref{equa1-pdird-case1}-\eqref{cequa2-pdird} hold.
  Suppose that the second equality of \eqref{equa-DetaXG} holds with
  $\Xi(\tau,\bm D(\widetilde{H}))$ given by \eqref{Xi-Case1}.
  We immediately have \eqref{cequa1-pdird}. Notice that $(\mathcal{E}_1)_{\gamma\gamma}=0$.
  By the expressions of $(\mathcal{E}_2)_{\gamma\gamma}$ and $\mathcal{F}_{\gamma c}$,
  we obtain \eqref{cequa2-pdird}. By the symmetry of $\mathcal{E}_1$ and
  $\Phi_l(\tau,\bm D(\widetilde{H}))$,
  \begin{equation}\label{mainequa1-case1}
  \mathcal{H}(\Delta\widetilde{\Gamma}_1)=\mathcal{E}_2\circ\mathcal{H}(\Delta\widetilde{X}_1+\Delta\widetilde{\Gamma}_1).
  \end{equation}
  This implies \eqref{equa7-pdird-case1}. By the symmetry of $\mathcal{E}_2$
  and $\Phi_l(\tau,\bm D(\widetilde{H}))$, we have that
  \begin{equation}
  \label{mainequa2-case1}
   \mathcal{G}(\Delta\widetilde{\Gamma}_1)=\mathcal{E}_1\circ\mathcal{G}(\Delta\widetilde{H}_1)
   +\!\left[\begin{matrix}
    \Phi_1\big(\tau,\bm D(\widetilde{H})\big) & 0 & 0 &0\\
    0 & \ddots & 0 & 0 \\
    0 & 0 &\Phi_{r_1}\big(\tau,\bm D(\widetilde{H})\big)& 0 \\
    0 & 0 &0 & \mathcal{G}(\widetilde{H}_{\gamma\gamma})
    \end{matrix}\right].
  \end{equation}
  By this, we obtain equations \eqref{equa1-pdird-case1}-\eqref{equa6-pdird-case1}.
  By using $\overline{\sigma}=\sigma-\theta\overline{u}$ and \eqref{sigmabar-case1},
  we have that
  \begin{subnumcases}{}\label{ME1-equa1-case1}
    [\mathcal{E}_1]_{a_la_{l'}}=E_{a_la_{l'}},\ {\rm for}\ l\ne l'\
    {\rm and}\ l,l'\!\in\!\{1,\ldots,r_0\}\ {\rm or}\ l,l'\!\in\!\{r_1\!+\!1,\ldots,r\!+\!1\}; \nonumber\\
        \label{ME1-equa2-case1}
    [\mathcal{E}_1]_{a_la_{l'}}=0,\ \ {\rm for}\ l\ne l'\
    {\rm and}\ l,l'\!\in\!\{r_0+1,\ldots,r_1\}; \nonumber\\
       \label{ME1-equa3-case1}
    [\mathcal{E}_1]_{a_la_l}=0,\ \ {\rm for}\ l=1,2,\ldots,r\!+\!1.\nonumber
  \end{subnumcases}
  Together with \eqref{mainequa2-case1}, we obtain that
  \eqref{equa1-pdird-case1}-\eqref{equa3-pdird-case1} hold.
  Consequently, the necessity follows. Conversely, suppose that
  \eqref{equa1-pdird-case1}-\eqref{equa3-pdird-case1} and
  \eqref{equa4-pdird-case1}-\eqref{equa7-pdird-case1} hold.
  Then \eqref{mainequa1-case1} and \eqref{mainequa2-case1} must hold.
  Along with \eqref{cequa1-pdird}-\eqref{cequa2-pdird},
  we obtain the second equality of \eqref{equa-DetaXG}.

  \medskip
  \noindent
   {\bf(ii)} It suffices to argue that the second equality of \eqref{equa-DetaXG} holds
  with $\Xi(\tau,\bm D(\widetilde{H}))$ given by \eqref{Xi-Case2} if and only if
  equations \eqref{equa1-pdird-case2}-\eqref{equa9-pdird-case2} hold.
  Suppose that the second equality of \eqref{equa-DetaXG} holds with
  $\Xi(\tau,\bm D(\widetilde{H}))$ given by \eqref{Xi-Case2}.
  Firstly, we readily get equation \eqref{equa9-pdird-case2}.
  By noting that $(\mathcal{E}_1)_{bb}=0$, $(\mathcal{E}_2)_{bb}=0$ and
  $\mathcal{F}_{\beta c}=0$, equation \eqref{equa1-pdird-case2} and
  \eqref{equa6-pdird-case2} also hold. Notice that
  \begin{equation}\label{mainequa1-case2}
  \mathcal{H}(\Delta\widetilde{\Gamma}_1)
  =\mathcal{E}_2\circ\mathcal{H}(\Delta\widetilde{H}_1)
   +\left[\begin{matrix}
    0_{\alpha\cup(\beta\backslash b),\alpha\cup(\beta\backslash b)} & 0_{\alpha\cup(\beta\backslash b),b} \\
    0_{b,\alpha\cup(\beta\backslash b)} & \mathcal{H}([\Phi_{r+1}\big(\tau,\bm D(\widetilde{H})\big)]_1) \\
    \end{matrix}\right]
  \end{equation}
  and
  \begin{equation}
  \label{mainequa2-case2}
   \mathcal{G}(\Delta\widetilde{\Gamma}_1)\!=\mathcal{E}_1\circ\mathcal{G}(\widetilde{H}_1)
   +\!\left[\begin{matrix}
    \Phi_1\big(\tau,\bm D(\widetilde{H})\big) & 0 & 0 &0 \\
    0 & \ddots & 0  &0\\
    0 & 0 &\Phi_{r}\big(\tau,\bm D(\widetilde{H})\big)& 0 \\
    0 & 0 &0 &\mathcal{G}([\Phi_{r+1}\big(\tau,\bm D(\widetilde{H})\big)]_1) \\
    \end{matrix}\right]\\
  \end{equation}
  due to the symmetry of $\mathcal{E}_2$, $\mathcal{E}_1$ and
  $\Phi_l\big(\tau,\bm D(\widetilde{H})\big)$ for $l=1,\ldots,r$,
  where $[\Phi_{r+1}\big(\tau,\bm D(\widetilde{H})\big)]_1$ is the matrix
  consisting of the first $m$ columns of $\Phi_{r+1}\big(\tau,\bm D(\widetilde{H})\big)$.
  Equation \eqref{mainequa1-case2} implies that \eqref{equa7-pdird-case2}-\eqref{equa8-pdird-case2}
  hold, while \eqref{mainequa2-case2} implies that \eqref{equa3-pdird-case2} holds.
  By $\overline{\sigma}=\sigma-\theta\overline{u}$ and \eqref{sigmabar-case2},
  \begin{subnumcases}{}\label{ME1-equa1-case2}
    [\mathcal{E}_1]_{a_la_{l'}}=E_{a_la_{l'}},\ {\rm for}\ l\ne l'\
    {\rm and}\ l,l'\!\in\!\{1,2,\ldots,r_0\}; \nonumber\\
        \label{ME1-equa2-case2}
    [\mathcal{E}_1]_{a_la_{l'}}=0,\ \ {\rm for}\ l\ne l'\
    {\rm and}\ l,l'\!\in\!\{r_0\!+\!1,\ldots,r\!+\!1\}; \nonumber\\
       \label{ME1-equa3-case2}
    [\mathcal{E}_1]_{a_la_l}=0,\ \ {\rm for}\ l=1,2,\ldots,r\!+\!1.\nonumber
  \end{subnumcases}
  Together with \eqref{mainequa2-case2}, we get equations \eqref{equa2-pdird-case2}-\eqref{equa5-pdird-case2}.
  Consequently, the necessity follows. Conversely, suppose that \eqref{equa2-pdird-case2}-\eqref{equa5-pdird-case2}
  and \eqref{equa7-pdird-case2}-\eqref{equa8-pdird-case2} hold.
  Then \eqref{mainequa1-case2} and \eqref{mainequa2-case2} must hold.
  Together with \eqref{equa1-pdird-case2}, \eqref{equa6-pdird-case2} and
  \eqref{equa9-pdird-case2}, we obtain the second equality of \eqref{equa-DetaXG}.
 \end{proof}

  Next we provide an equivalent characterization for the critical cone of $K$ at $(\overline{t},\overline{X})+(\overline{\zeta},\overline{\Gamma})$
  associated with the complementarity problem $K\ni(t,X)\perp(\tau,Y)\in K^{\circ}$.
 \begin{alemma}\label{Lemma1-critical-cone}
  Let $((\overline{t},\overline{X}),(\overline{\zeta},\overline{\Gamma}))\in{\rm gph}\mathcal{N}_{K}$ be given
  with $(\overline{t}+\overline{\zeta},\overline{X}+\overline{\Gamma})\notin{\rm int}\,K\cup{\rm int}\,K^{\circ}$.
  We write $(t,X)=(\overline{t},\overline{X})+(\overline{\zeta},\overline{\Gamma})$,
  $\overline{\sigma}=\sigma(\overline{X})$ and $\sigma=\sigma(X)$.
  Let $X$ have the SVD as in \eqref{X-SVD} with the index sets $a,b,c$
  and $a_l\ (l=1,2,\ldots,r)$ defined by \eqref{abc} and \eqref{al}.
  \begin{enumerate}
    \item [(i)] If $\overline{\sigma}_k>0$, then $(\tau,Z)\in\mathcal{T}_K(\overline{t},\overline{X})\cap(\overline{\zeta},\overline{\Gamma})^{\perp}$
                if and only if $(\tau,\bm D(\widetilde{Z}))\in \mathcal{C}_1$.
    \item[(ii)] If $\overline{\sigma}_k=0$, then $(\tau,Z)\in\mathcal{T}_K(\overline{t},\overline{X})
                \cap(\overline{\zeta},\overline{\Gamma})^{\perp}$
                if and only if $(\tau,\bm D(\widetilde{Z}))\in \mathcal{C}_2$.
 \end{enumerate}
 \end{alemma}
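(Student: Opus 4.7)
The plan is to prove each equivalence by translating both sides into the SVD-rotated coordinates $\widetilde{Z} = \overline{U}^{\mathbb{T}} Z \overline{V}$ (where the operator $\bm D$ naturally lives) and matching them term by term with the defining relations of $\mathcal{C}_1$ in \eqref{C1-cone} and $\mathcal{C}_2$ in \eqref{C2-cone}. The starting point is Lemma \ref{k0k1-propery}: the SVD \eqref{X-SVD} of $X=\overline{X}+\overline{\Gamma}$ simultaneously diagonalizes $\overline{X}$ with singular values $\overline{\sigma}=\sigma-\theta\overline{u}$, so $\overline{\Gamma}=\theta\,\overline{U}[{\rm Diag}(\overline{u})\ 0]\overline{V}^{\mathbb{T}}$. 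Combined with $(\overline{\zeta},\overline{\Gamma})\in\mathcal{N}_K(\overline{t},\overline{X})$, this also pins down $\overline{\zeta}$: on $(t,X)\in{\rm bd}\,K$ one has $\overline{\zeta}=0$, while off ${\rm bd}\,K$ one has $\overline{\zeta}\ne 0$ (proportional to $\theta$). In these coordinates,
\[
  \langle (\tau,Z),(\overline{\zeta},\overline{\Gamma})\rangle
  =\tau\overline{\zeta}+\theta\sum_{i\in\alpha\cup\beta}\overline{u}_i\widetilde{Z}_{ii}.
\]

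Next, I would invoke the block characterization of $\mathcal{T}_K(\overline{t},\overline{X})$ available in \cite{DingC10}: using the partition $\alpha\cup\beta\cup\gamma$ (Case 1) or $\alpha\cup\beta$ (Case 2) of the row index set, a pair $(\tau,Z)$ lies in the tangent cone iff the ``unconstrained'' blocks on $\gamma$ (or on $b$ in Case 2, as a rectangular matrix) are free, each diagonal block $\mathcal{G}(\widetilde{Z}_{a_la_l})$ is an arbitrary symmetric matrix, and a Ky Fan-type inequality holds. Specifically, that inequality reads
\[
  \sum_{l=1}^{r_0}{\rm tr}(\mathcal{G}(\widetilde{Z}_{a_la_l}))+s_{(k-k_0)}(\bm{\kappa}_\beta(\bm D(\widetilde{Z})))\le\tau
\]
in Case 1, with the analog using $\|\cdot\|_{(k-k_0)}$ in Case 2. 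These inequalities coincide precisely with the first defining relation of $\mathcal{C}_1$ (resp.\ $\mathcal{C}_2$), and the free $\gamma$-block (resp.\ unconstrained $b$-block) on the tangent-cone side corresponds to coordinates absent from $\bm D$, so they impose no constraint after projecting through $\bm D$.

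Finally, I would intersect with the orthogonality $\langle(\tau,Z),(\overline{\zeta},\overline{\Gamma})\rangle=0$. On ${\rm bd}\,K$, since $\overline{\zeta}=0$ and the structural constraint on $\overline{u}$ in Lemma \ref{k0k1-propery} makes the orthogonality automatic on the tangent cone, we recover the single-inequality form of $\mathcal{C}_1$. Off ${\rm bd}\,K$, orthogonality contributes the additional equality $\sum_{l=1}^{r_0}{\rm tr}(\mathcal{G}(\widetilde{Z}_{a_la_l}))+\langle\overline{u}_\beta,\bm{\kappa}_\beta(\bm D(\widetilde{Z}))\rangle=\tau$, which matches the second defining relation in \eqref{C1-cone} (resp.\ \eqref{C2-cone}). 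The main obstacle is the identity $\sum_{i\in\beta}\overline{u}_i\widetilde{Z}_{ii}=\langle\overline{u}_\beta,\bm{\kappa}_\beta(\bm D(\widetilde{Z}))\rangle$: by \eqref{al}, on each equal-singular-value block $a_l$ of $X$ the vector $\overline{u}$ is constant, so ${\rm tr}(\mathcal{G}(\widetilde{Z}_{a_la_l}))=\sum_j\lambda_j(\mathcal{G}(\widetilde{Z}_{a_la_l}))$ is the quantity to pair against the common value $\overline{u}_i$; a careful block-by-block bookkeeping (using the sub-partition $\beta=\beta_1\cup\beta_2\cup\beta_3$) then gives the identity and completes the match between the two characterizations.
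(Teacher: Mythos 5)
Your outline captures the right coordinate change, the correct inner-product formula $\tau\overline{\zeta}+\theta\sum_{i\in\alpha\cup\beta}\overline{u}_i\widetilde{Z}_{ii}$, and the observation that $\overline u$ is constant on each block $a_l$ (from which the identity $\sum_{i\in\beta}\overline{u}_i\widetilde{Z}_{ii}=\langle\overline{u}_\beta,\bm\kappa_\beta(\bm D(\widetilde Z))\rangle$ indeed follows because trace equals sum of eigenvalues blockwise). These parts track the paper.

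However, there is a genuine gap, and it sits precisely where you declare the two sides ``coincide precisely.'' You have misstated the tangent-cone inequality. The expression of $\mathcal{T}_K(\overline t,\overline X)$ from \cite[Eq.\ (4.1)]{DingC10} (resp.\ \cite[Eq.\ (4.2)]{DingC10}) is
\[
  {\textstyle\sum_{l=1}^{r_0}}\mathrm{tr}(\widetilde{Z}_{a_la_l})
  +{\textstyle\sum_{i=1}^{k-k_0}}\lambda_i\big(\mathcal{G}(\widetilde{Z}_{\beta\beta})\big)\le\tau
  \qquad\text{(Case 1)},
\]
i.e.\ the Ky Fan sum is taken over the ordered eigenvalues of the \emph{entire} $|\beta|\times|\beta|$ symmetric block $\mathcal{G}(\widetilde{Z}_{\beta\beta})$ (and of $[\widetilde{Z}_{\beta\beta}\ \ \widetilde{Z}_{\beta c}]$ in Case 2), whereas the first defining relation of $\mathcal{C}_1$ involves $s_{(k-k_0)}\!\big(\bm\kappa_\beta(\bm D(\widetilde Z))\big)$, the top sum over the \emph{concatenation of blockwise} eigenvalues of the diagonal subblocks $\mathcal{G}(\widetilde Z_{a_la_l})$. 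These two quantities do \emph{not} coincide in general: by Schur's majorization theorem $s_{(k-k_0)}(\bm\kappa_\beta)\le s_{(k-k_0)}\big(\lambda(\mathcal{G}(\widetilde Z_{\beta\beta}))\big)$, with equality only when $\mathcal{G}(\widetilde Z_{\beta\beta})$ has a compatible block structure. So while your ``only if'' direction is fine (the tangent-cone inequality is the stronger one), your ``if'' direction does not go through as written.

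What you are missing is the paper's central mechanism: inserting the orthogonality into the von Neumann trace inequality produces a sandwich
\[
 s_{(k-k_0)}\!\big(\lambda(\mathcal{G}(\widetilde Z_{\beta\beta}))\big)\le
 \big\langle\mathcal{G}(\widetilde Z_{\beta\beta}),\mathrm{Diag}(\overline u_\beta)\big\rangle
 \le \overline u_\beta^{\mathbb{T}}\lambda\big(\mathcal{G}(\widetilde Z_{\beta\beta})\big)
 \le s_{(k-k_0)}\!\big(\lambda(\mathcal{G}(\widetilde Z_{\beta\beta}))\big),
\]
forcing the von Neumann equality case, hence a simultaneous ordered eigenvalue decomposition of $\mathcal{G}(\widetilde Z_{\beta\beta})$ with $\mathrm{Diag}(\overline u_\beta)$; by \cite[Prop.\ 5.1]{DingC15} this pins down a block-diagonal structure on $\beta_1\cup\beta_2\cup\beta_3$ (and the singular-value analogue in Case 2). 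Only after this structural collapse is it legitimate to replace the full-block spectrum by the concatenated blockwise spectrum and thereby match the $\mathcal{C}_1$ (resp.\ $\mathcal{C}_2$) description. Your ``block-by-block bookkeeping'' handles a true but elementary identity; it does not substitute for the von Neumann equality-case argument, which is where the real work lies. A secondary, minor point: you assert $\overline\zeta=0$ on ${\rm bd}\,K$ and $\overline\zeta\ne0$ otherwise; the precise relation used by the paper is $\overline\zeta=t-\overline t=-\theta$ from Lemma \ref{k0k1-propery}, which is what drives the sandwich above.
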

 \begin{proof}
  {\bf(i)} Let $r_0,r_1\!\in\{1,2,\ldots,r\}$
  be such that $\alpha=\bigcup_{l=1}^{r_0}a_l$, $\beta=\bigcup_{l=r_0+1}^{r_1}a_l$
  and $\gamma=\bigcup_{l=r_1+1}^{r+1}a_l$, where $\alpha,\beta,\gamma$ and
  $\overline{\gamma}$ are defined by \eqref{abg-case1} with
  $k_0\in[0,k\!-\!1]$ and $k_1\in[k,m]$ such that \eqref{sigmabar-case1} holds.
  Also, by $\overline{\sigma}=\sigma-\theta\overline{u}$ (see Case (i) of Lemma \ref{Lemma1-pdird})
  and $(\overline{\zeta},\overline{\Gamma})\in{\rm bd}\,K^{\circ}$,
  \begin{equation}\label{WGamma-SVD1}
   \overline{\zeta}=t-\overline{t}=-\theta\ \ {\rm and}\ \ \overline{\Gamma}=X-\overline{X}=\overline{U}\big[{\rm Diag}(\theta\overline{u})\ \ 0\big]\overline{V}^{\mathbb{T}}.
  \end{equation}
  Let $(\tau,Z)$ be an arbitrary point from
  $\mathcal{T}_K(\overline{t},\overline{X})\cap (\overline{\zeta},\overline{\Gamma})^{\perp}$.
  From $(\tau,Z)\in\mathcal{T}_K(\overline{t},\overline{X})$ and the expression of
  $\mathcal{T}_K(\overline{t},\overline{X})$ (see \cite[Equation(4.1)]{DingC10}), it follows that
  \[
    {\textstyle\sum_{l=1}^{r_0}}{\rm tr}(\overline{U}_{a_l}^{\mathbb{T}}Z\overline{V}_{a_l})
    +{\textstyle\sum_{i=1}^{k-k_0}}\lambda_i\big(\mathcal{G}(\widetilde{Z}_{\beta\beta})\big)\le \tau,
  \]
 while from $(\tau,Z)\in(\overline{\zeta},\overline{\Gamma})^{\perp}$ and equation \eqref{WGamma-SVD1}, it follows that
 \begin{align*}
   0=\tau\overline{\zeta}+\langle Z,\overline{\Gamma}\rangle
    &=-\theta\tau+\theta{\textstyle\sum_{l=1}^{r_0}}{\rm tr}(\overline{U}_{a_l}^{\mathbb{T}}Z\overline{V}_{a_l})
     +\theta\langle \widetilde{Z}_{\beta\beta}, {\rm Diag}(\overline{u}_{\beta})\rangle\\
    &=-\theta\tau+\theta{\rm tr}(\overline{U}_{\alpha}^{\mathbb{T}}Z\overline{V}_{\alpha})
      +\theta\langle\mathcal{G}(\widetilde{Z}_{\beta\beta}),{\rm Diag}(\overline{u}_{\beta})\rangle\\
    &\le -\theta\tau+\theta{\rm tr}(\overline{U}_{\alpha}^{\mathbb{T}}Z\overline{V}_{\alpha})
        +\theta\overline{u}_{\beta}^{\mathbb{T}}\lambda(\mathcal{G}(\widetilde{Z}_{\beta\beta}))\\
    &\le -\theta\tau+\theta{\rm tr}(\overline{U}_{\alpha}^{\mathbb{T}}Z\overline{V}_{\alpha})
        +\theta{\textstyle\sum_{i=1}^{k-k_0}}\lambda_i\big(\mathcal{G}(\widetilde{Z}_{\beta\beta})\big)
 \end{align*}
 where the first inequality is using the von Neumann's trace inequality,
 and the last one is due to $\overline{u}_{\beta}=\overline{u}_{\beta}^{\downarrow}$
 and $\sum_{i\in\beta}\overline{u}_i=k-k_0$. So,
 $(\tau,Z)\in\mathcal{T}_K(\overline{t},\overline{X})\cap(\overline{\zeta},\overline{\Gamma})^{\perp}$
 if and only if
 \begin{equation}\label{eigenvalue-ineq}
  \langle\mathcal{G}(\widetilde{Z}_{\beta\beta}),{\rm Diag}(\overline{u}_{\beta})\rangle
  =\overline{u}_{\beta}^{\mathbb{T}}\lambda(\mathcal{G}(\widetilde{Z}_{\beta\beta}))
 \end{equation}
 and
 \begin{equation}\label{C1set-ineq}
    {\rm tr}(\widetilde{Z}_{\alpha\alpha})
   +{\textstyle\sum_{i=1}^{k-k_0}}\lambda_i\big(\mathcal{G}(\widetilde{Z}_{\beta\beta}\big)\le \tau
   \ {\rm and}\
   {\rm tr}(\widetilde{Z}_{\alpha\alpha})
   +\langle \overline{u}_{\beta}, \lambda(\mathcal{G}(\widetilde{Z}_{\beta\beta}))\rangle
   =\tau.
 \end{equation}
 By the von Neumann's trace inequality, equation \eqref{eigenvalue-ineq}
 is equivalent to saying that $\mathcal{G}(\widetilde{Z}_{\beta\beta})$ and
 ${\rm Diag}(\overline{u}_{\beta})$ have a simultaneous ordered eigenvalue decomposition,
 which by the proof of Case (i) of \cite[Proposition 5.1]{DingC15} is equivalent to
 saying that
 \[
   \mathcal{G}(\widetilde{Z}_{\beta\beta})
   =\left[\begin{matrix}
     \mathcal{G}(\widetilde{Z}_{\beta_1\beta_1}) & 0 & 0\\
      0 & \tau I_{|\beta_2|} & 0\\
      0 & 0 &\mathcal{G}(\widetilde{H}_{\beta_3\beta_3})
      \end{matrix}\right],
  \]
  where $\beta_1$, $\beta_2$ and $\beta_3$ are the index sets defined as in Lemma \ref{k0k1-propery}.
 Together with \eqref{C1set-ineq}, the definitions of the operator $\bm D$ in \eqref{MD-Case1}
 and the set $\mathcal{C}_1$ in \eqref{C1-cone}, we get the desired equivalence.

  \medskip
  \noindent
  {\bf(ii)} Let $r_0\in\{1,\ldots,r\}$ be such that $\alpha=\bigcup_{l=1}^{r_0}a_l$
  and $\beta=\bigcup_{l=r_0+1}^{r+1}a_l$, where $\alpha$ and $\beta$ are defined by \eqref{abg-case2}
  with $k_0\in[0,k\!-\!1]$ such that \eqref{sigmabar-case2} holds. Now
  $\beta_1=a_{r_0+1}$, $\beta_3=b$ and $\beta_2=\bigcup_{l=r_0+2}^{r}a_l$. Also,
  by $\overline{\sigma}=\sigma-\theta\overline{u}$ (see Case (ii) of Lemma \ref{Lemma1-pdird})
  and $(\overline{\zeta},\overline{\Gamma})\in{\rm bd}\,K^{\circ}$,
  \begin{equation}\label{WGamma-SVD2}
   \overline{\zeta}=t-\overline{t}=-\theta\ \ {\rm and}\ \ \overline{\Gamma}=X-\overline{X}=\overline{U}\big[{\rm Diag}(\theta\overline{u})\ \ 0\big]\overline{V}^{\mathbb{T}}.
  \end{equation}
  Let $(\tau,Z)$ be an arbitrary point from
  $\mathcal{T}_K(\overline{t},\overline{X})\cap (\overline{\zeta},\overline{\Gamma})^{\perp}$.
  From $(\tau,Z)\in\mathcal{T}_K(\overline{t},\overline{X})$ and the expression of
  $\mathcal{T}_K(\overline{t},\overline{X})$ (see \cite[Equation(4.2)]{DingC10}), it follows that
  \[
    {\textstyle\sum_{l=1}^{r_0}}{\rm tr}(\overline{U}_{a_l}^{\mathbb{T}}Z\overline{V}_{a_l})
   +{\textstyle\sum_{i=1}^{k-k_0}}\sigma_i\big([\widetilde{Z}_{\beta\beta}\ \ \widetilde{Z}_{\beta c}]\big)\le \tau,
  \]
 while from $(\tau,Z)\in(\overline{\zeta},\overline{\Gamma})^{\perp}$ and equation \eqref{WGamma-SVD2},
 it follows that
 \begin{align*}
   0=\tau\zeta+\langle Z,\overline{\Gamma}\rangle
    &=-\theta\tau+\theta{\textstyle\sum_{l=1}^{r_0}}{\rm tr}(\overline{U}_{a_l}^{\mathbb{T}}Z\overline{V}_{a_l})
     +\theta\langle [\widetilde{Z}_{\beta\beta}\ \ \widetilde{Z}_{\beta c}],[{\rm Diag}(\overline{u}_{\beta})\ \ 0_{\beta c}]\rangle\\
    &\le -\theta\tau+\theta{\rm tr}(\overline{U}_{\alpha}^{\mathbb{T}}Z\overline{V}_{\alpha})
        +\theta\overline{u}_{\beta}^{\mathbb{T}}\sigma([\widetilde{Z}_{\beta\beta}\ \ \widetilde{Z}_{\beta c}])\\
    &\le -\theta\tau+\theta{\rm tr}(\overline{U}_{\alpha}^{\mathbb{T}}Z\overline{V}_{\alpha})
        +\theta{\textstyle\sum_{i=1}^{k-k_0}}\sigma_i([\widetilde{Z}_{\beta\beta}\ \ \widetilde{Z}_{\beta c}])
 \end{align*}
 where the first inequality is using the von Neumann's trace inequality,
 and the last one is due to $\overline{u}_{\beta}=\overline{u}_{\beta}^{\downarrow}$
 and $\sum_{i\in\beta}\overline{u}_i=k-k_0$. So,
 $(\tau,Z)\in\mathcal{T}_K(\overline{t},\overline{X})\cap(\overline{\zeta},\overline{\Gamma})^{\perp}$
 if and only if
 \begin{equation}\label{singularvalue-ineq}
  \langle [\widetilde{Z}_{\beta\beta}\ \ \widetilde{Z}_{\beta c}],[{\rm Diag}(\overline{u}_{\beta})\ \ 0_{\beta c}]\rangle
  = \overline{u}_{\beta}^{\mathbb{T}}\sigma([\widetilde{Z}_{\beta\beta}\ \ \widetilde{Z}_{\beta c}])
 \end{equation}
 and
 \begin{equation}\label{C2set-ineq}
   {\rm tr}(\widetilde{Z}_{\alpha\alpha})
   +{\textstyle\sum_{i=1}^{k-k_0}}\sigma_i\big([\widetilde{Z}_{\beta\beta}\ \ \widetilde{Z}_{\beta c}]\big)
   \le \tau
   \ {\rm and}\
   {\rm tr}(\widetilde{Z}_{\alpha\alpha})
   +\langle \overline{u}_{\beta},\sigma([\widetilde{Z}_{\beta\beta}\ \ \widetilde{Z}_{\beta c}])\rangle
   =\tau.
 \end{equation}
 By the von Neumann's trace inequality, equation \eqref{singularvalue-ineq}
 is equivalent to saying that $[\widetilde{Z}_{\beta\beta}\ \ \widetilde{Z}_{\beta c}]$ and
 $[{\rm Diag}(\overline{u}_{\beta})\ \ 0_{\beta c}]$ have a simultaneous ordered SVD,
 which by the proof of Case (ii) of \cite[Proposition 5.1]{DingC15} is equivalent to
 saying that $[\widetilde{Z}_{\beta\beta}\ \ \widetilde{Z}_{\beta c}]$ has the following structure
 \begin{equation}\label{Zbb-case2}
   [\widetilde{Z}_{\beta\beta}\ \ \widetilde{Z}_{\beta c}]
   =\left[\begin{matrix}
     \widetilde{Z}_{a_{r_0+1}a_{r_0+1}} & 0 & \cdots & 0 & 0 & 0\\
      0 & \widetilde{Z}_{a_{r_0+2}a_{r_0+2}} &\cdots & 0 & 0 & 0\\
      \vdots & \vdots &\ddots&\vdots&\vdots&\vdots\\
      0 &0 & \cdots&\widetilde{Z}_{a_ra_r}& 0& 0\\
      0 &0 & \cdots& 0 & \widetilde{Z}_{bb}& \widetilde{Z}_{bc}\\
      \end{matrix}\right]
  \end{equation}
 with $ \widetilde{Z}_{a_la_l}\in\mathbb{S}^{|a_l|}$ for $l=r_0+1,\ldots,r$.
 Together with equation \eqref{C2set-ineq}, the definitions of the operator $\bm D$
 in \eqref{MD-Case2} and the set $\mathcal{C}_2$ in \eqref{C2-cone},
 we obtain the desired equivalence.
 \end{proof}

  Now we give a characterization for the negative polar cone of the critical cone of $K$
  at $(\overline{t},\overline{X})+(\overline{\zeta},\overline{\Gamma})$
  associated with the complementarity problem $K\ni(t,X)\perp(\tau,Y)\in K^{\circ}$.
 \begin{alemma}\label{Lemma2-critical-cone}
  Let $((\overline{t},\overline{X}),(\overline{\zeta},\overline{\Gamma}))\in{\rm gph}\mathcal{N}_{K}$ be given
  with $(\overline{t}+\overline{\zeta},\overline{X}+\overline{\Gamma})\notin{\rm int}\,K\cup{\rm int}\,K^{\circ}$.
  We write $(t,X)=(\overline{t},\overline{X})+(\overline{\zeta},\overline{\Gamma})$,
  $\overline{\sigma}=\sigma(\overline{X})$ and $\sigma=\sigma(X)$.
  Let $X$ have the SVD as in \eqref{X-SVD} with the index sets $a,b,c$
  and $a_l\ (l=1,2,\ldots,r)$ defined by \eqref{abc} and \eqref{al}.
  Suppose that $((\Delta t,\Delta X),(\Delta\zeta,\Delta\Gamma))\in{\rm gph}D\mathcal{N}_{K^\circ}
  \big((\overline{\zeta},\overline{\Gamma})|(\overline{t},\overline{X})\big)$.
  The following statements hold.
  \begin{enumerate}
    \item [(i)] If $\overline{\sigma}_k>0$, then $(\Delta t,\Delta X)-
                (0,\overline{U}\mathfrak{X}(\Delta\widetilde{\Gamma})\overline{V}^{\mathbb{T}})
                \in[\mathcal{T}_K(\overline{t},\overline{X})\cap (\overline{\zeta},\overline{\Gamma})^{\perp}]^{\circ}$
                if and only if
                \begin{equation}\label{npcone-case1}
                 \tau \Delta t+{\textstyle\sum_{l=1}^{r_1}}\langle \mathcal{G}(\Delta\widetilde{X}_{a_la_l}),
                 \mathcal{G}(\widetilde{Z}_{a_la_l})\rangle\le 0
                \quad\ \forall(\tau,Z)\in\mathcal{T}_K(\overline{t},\overline{X})\cap (\overline{\zeta},\overline{\Gamma})^{\perp},
                \end{equation}
                where $\mathfrak{X}\!: \mathbb{R}^{m\times n}\to\mathbb{R}^{m\times n}$ is a mapping
                defined by \eqref{MFX-Case1}.

  \item [(ii)] If $\overline{\sigma}_k=0$, then $(\Delta t,\Delta X)-
               (0,\overline{U}\mathfrak{X}(\Delta\widetilde{\Gamma})\overline{V}^{\mathbb{T}})
                \in[\mathcal{T}_K(\overline{t},\overline{X})\cap (\overline{\zeta},\overline{\Gamma})^{\perp}]^{\circ}$
                if and only if
                \begin{equation}\label{npcone-case2}
                 \tau \Delta t+{\textstyle\sum_{l=1}^{r}}\langle \mathcal{G}(\Delta\widetilde{X}_{a_la_l}),\mathcal{G}(\widetilde{Z}_{a_la_l})\rangle
                  +\langle\Delta\widetilde{X}_{b b},\widetilde{Z}_{bb}\rangle
                  +\langle \Delta\widetilde{X}_{bc},\widetilde{Z}_{bc}\rangle\le 0
                \end{equation}
                for any $(\tau,Z)\in\mathcal{T}_K(\overline{t},\overline{X})\cap (\overline{\zeta},\overline{\Gamma})^{\perp}$,
                where $\mathfrak{X}\!: \mathbb{R}^{m\times n}\to\mathbb{R}^{m\times n}$
                is defined by \eqref{MFX-Case2}.
 \end{enumerate}
 \end{alemma}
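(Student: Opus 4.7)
First I would unwind the hypothesis using Lemma \ref{tcone-lemma}. Since $((\overline t,\overline X),(\overline\zeta,\overline\Gamma))\in\mathrm{gph}\,\mathcal N_K$, the relation
\[
\bigl((\Delta t,\Delta X),(\Delta\zeta,\Delta\Gamma)\bigr)\in\mathrm{gph}\,D\mathcal N_{K^\circ}\bigl((\overline\zeta,\overline\Gamma)\mid(\overline t,\overline X)\bigr)
\]
is equivalent, by the third equivalence of Lemma \ref{tcone-lemma}, to $((\Delta\zeta,\Delta\Gamma),(\Delta t,\Delta X))\in\mathcal T_{\mathrm{gph}\,\mathcal N_K}((\overline t,\overline X),(\overline\zeta,\overline\Gamma))$, and hence to
\[
\Pi_K'\bigl((t,X);(\Delta t,\Delta X)+(\Delta\zeta,\Delta\Gamma)\bigr)=(\Delta\zeta,\Delta\Gamma).
\]
Therefore Lemma \ref{Lemma2-pdird} applies: in case (i) the full list \eqref{equa0-pdird-case1}--\eqref{cequa2-pdird} holds, and in case (ii) the corresponding \eqref{equa0-pdird-case2}--\eqref{equa9-pdird-case2} holds. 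These identities express every off-diagonal block of $\Delta\widetilde X$ in terms of the matching block of $\Delta\widetilde\Gamma$ through the Hadamard factors $\mathcal E_1,\mathcal E_2,\mathcal F$.

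Next, I would translate $(\tau,Z)\in\mathcal T_K(\overline t,\overline X)\cap(\overline\zeta,\overline\Gamma)^\perp$ into a block structure via Lemma \ref{Lemma1-critical-cone}: this membership is equivalent to $(\tau,\bm D(\widetilde Z))\in\mathcal C_1$ in case (i) (respectively $\mathcal C_2$ in case (ii)). In particular, in case (i) the blocks $\widetilde Z_{\gamma\gamma}$ and $\widetilde Z_{\gamma c}$ are unconstrained by the definition of $\bm D$ (hence play no role in $\mathcal C_1$), while the $\beta\beta$ block is pinned down to be symmetric with a prescribed simultaneous ordered spectral decomposition; in case (ii) the analogous statement holds with $[\widetilde Z_{\beta\beta}\ \widetilde Z_{\beta c}]$ in place. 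Since both sides of the claimed equivalence are inequalities ranging over the same critical cone, it suffices to prove the pointwise identity
\[
\bigl\langle(\Delta t,\Delta X)-(0,\overline U\mathfrak X(\Delta\widetilde\Gamma)\overline V^{\mathbb T}),(\tau,Z)\bigr\rangle=\tau\Delta t+\sum_{l=1}^{r_1}\bigl\langle\mathcal G(\Delta\widetilde X_{a_la_l}),\mathcal G(\widetilde Z_{a_la_l})\bigr\rangle
\]
in case (i), and the analogous identity with the extra $\langle\Delta\widetilde X_{bb},\widetilde Z_{bb}\rangle+\langle\Delta\widetilde X_{bc},\widetilde Z_{bc}\rangle$ term in case (ii). The iff conclusion then follows immediately.

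To prove this identity I would partition $\langle\Delta\widetilde X-\mathfrak X(\Delta\widetilde\Gamma),\widetilde Z\rangle$ into blocks indexed by $\alpha\alpha,\alpha\beta,\alpha\gamma,\beta\beta,\beta\gamma,\gamma\gamma,\overline\gamma c,\gamma c$ in case (i). For each off-diagonal block, I would invoke the matching Lemma \ref{Lemma2-pdird} equation (for instance \eqref{equa4-pdird-case1} on $\alpha\beta$, \eqref{equa5-pdird-case1} on $\alpha\gamma$, \eqref{equa6-pdird-case1} on $\beta\gamma$, \eqref{equa7-pdird-case1} for the skew part, \eqref{cequa1-pdird} on $\overline\gamma c$, and \eqref{cequa2-pdird} on $\gamma\gamma,\gamma c$) to rewrite the corresponding component of $\Delta\widetilde X$ in terms of $\Delta\widetilde\Gamma$ and a Hadamard factor. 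The mapping $\mathfrak X$ defined in Proposition \ref{main-prop} is built exactly so that, after subtracting $\mathfrak X(\Delta\widetilde\Gamma)$, each such off-diagonal block of $\Delta\widetilde X-\mathfrak X(\Delta\widetilde\Gamma)$ either pairs to zero against the corresponding block of $\widetilde Z$ under the structure imposed by Lemma \ref{Lemma1-critical-cone} (e.g.\ on $\alpha\beta,\alpha\gamma,\beta\gamma$ via the symmetry constraints encoded in $\mathcal C_1$), or is annihilated because the $\widetilde Z$-block simply vanishes (as on $\gamma\gamma,\gamma c$). Only the diagonal blocks $a_la_l$ survive, yielding the desired right-hand side. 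Case (ii) is handled the same way, except that the $bb,bc$ blocks of $\Delta\widetilde X$ are left intact by $\mathfrak X$ (since in Lemma \ref{Lemma2-pdird}(ii) these enter through the unsymmetrized block $\Phi_{r+1}$ rather than via $\mathcal E_1,\mathcal E_2$), so they contribute the extra terms on the right-hand side of \eqref{npcone-case2}.

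The main obstacle is the detailed bookkeeping on the $\beta$-block. The partition $\beta=\beta_1\cup\beta_2\cup\beta_3$ from Lemma \ref{k0k1-propery} and the permutation freedom in choosing $R_l\in\mathbb O^{|a_l|}(W_l)$ in Lemma \ref{Lemma1-pdird} mean that the Lemma \ref{Lemma2-pdird} relations pin down only the symmetric diagonal parts $\mathcal G(\Delta\widetilde\Gamma_{a_la_l})$ together with certain skew-symmetric parts through the Hadamard multipliers; verifying that the explicit formula for $\mathfrak X$ (i.e., the choices \eqref{MFX-Case1} and \eqref{MFX-Case2} referenced in Proposition \ref{main-prop}) was tuned precisely to absorb these Hadamard multipliers, and moreover to interact correctly with the simultaneous-diagonalization constraint encoded in $\mathcal C_1,\mathcal C_2$ on $\widetilde Z_{\beta\beta}$, is the most delicate step. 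Once that matching is verified, the cancellation in the block computation is routine.
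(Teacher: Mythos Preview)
Your overall strategy matches the paper's: pass to the directional-derivative identity via Lemma~\ref{tcone-lemma}, invoke Lemma~\ref{Lemma2-pdird} to express the off-diagonal blocks of $\Delta\widetilde X$ through $\Delta\widetilde\Gamma$, and then verify the pointwise identity block by block. But your description of \emph{why} the off-diagonal contributions cancel is inverted. You say the $\alpha\beta,\alpha\gamma,\beta\gamma$ blocks of $\Delta\widetilde X-\mathfrak X(\Delta\widetilde\Gamma)$ pair to zero against $\widetilde Z$ ``under the structure imposed by Lemma~\ref{Lemma1-critical-cone}'', and that on $\gamma\gamma,\gamma c$ ``the $\widetilde Z$-block simply vanishes.'' Neither holds: Lemma~\ref{Lemma1-critical-cone} constrains only $\bm D(\widetilde Z)$, i.e.\ the diagonal blocks $a_la_l$; for $(\tau,Z)$ in the critical cone the blocks $\widetilde Z_{\alpha\beta},\widetilde Z_{\alpha\gamma},\widetilde Z_{\beta\gamma},\widetilde Z_{\gamma\gamma},\widetilde Z_{\gamma c}$ remain completely free. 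If you tried to argue via vanishing of $\widetilde Z$-blocks you would get stuck.

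The actual mechanism, as the paper carries it out, is that the $\Delta\widetilde X$ side vanishes. The map $\mathfrak X$ in \eqref{MFX-Case1} is \emph{defined} so that its symmetric $\alpha\beta,\alpha\gamma,\beta\gamma$ blocks, its skew blocks, and its $\overline\gamma c$ block exactly reproduce the expressions forced on $\Delta\widetilde X$ by \eqref{equa4-pdird-case1}--\eqref{cequa1-pdird}; hence those blocks of $\Delta\widetilde X-\mathfrak X(\Delta\widetilde\Gamma)$ are identically zero. On $\gamma\gamma,\gamma c$ it is \eqref{cequa2-pdird} that gives $\Delta\widetilde X_{\gamma\gamma}=0$ and $\Delta\widetilde X_{\gamma c}=0$ directly (and $\mathfrak X$ is zero there by construction). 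Consequently in case~(i) the identity holds for \emph{all} $(\tau,Z)\in\mathbb X$, not just for the critical cone. In case~(ii) the paper first obtains the all-$(\tau,Z)$ identity with $\langle\Delta\widetilde X_{\beta\beta},\widetilde Z_{\beta\beta}\rangle+\langle\Delta\widetilde X_{\beta c},\widetilde Z_{\beta c}\rangle$ on the right, and only then restricts to the critical cone, using the block-diagonal structure \eqref{Zbb-case2} of $[\widetilde Z_{\beta\beta}\ \widetilde Z_{\beta c}]$ from the proof of Lemma~\ref{Lemma1-critical-cone}(ii) to rewrite this as $\sum_{l=r_0+1}^{r}\langle\mathcal G(\Delta\widetilde X_{a_la_l}),\mathcal G(\widetilde Z_{a_la_l})\rangle+\langle\Delta\widetilde X_{bb},\widetilde Z_{bb}\rangle+\langle\Delta\widetilde X_{bc},\widetilde Z_{bc}\rangle$. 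So the structure of $Z$ enters only at this final simplification step in case~(ii), not for the off-diagonal cancellations; your ``main obstacle'' paragraph about the $\beta$-block is therefore aimed at the wrong place.
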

 \begin{proof}
 Since $((\Delta t,\Delta X),(\Delta\zeta,\Delta\Gamma))\in{\rm gph}D\mathcal{N}_{K^\circ}
 \big((\overline{\zeta},\overline{\Gamma})|(\overline{t},\overline{X})\big)$,
 by Lemma \ref{tcone-lemma} we have that
 $\Pi_{K}'\big((t,X);(\Delta t,\Delta X)+(\Delta\zeta,\Delta\Gamma)\big)=(\Delta\zeta,\Delta\Gamma)$.
 So, the results of Lemma \ref{Lemma2-pdird} hold.

 \medskip
 \noindent
 {\bf(i)} Let $r_0,r_1\!\in\{1,2,\ldots,r\}$ be such that $\alpha=\bigcup_{l=1}^{r_0}a_l$,
 $\beta=\bigcup_{l=r_0+1}^{r_1}a_l$ and $\gamma=\bigcup_{l=r_1+1}^{r+1}a_l$,
 where $\alpha,\beta,\gamma$ and $\overline{\gamma}$ are defined by \eqref{abg-case1}
 with the integers $k_0\in[0,k\!-\!1]$ and $k_1\in[k,m]$ such that \eqref{sigmabar-case1} holds.
 For any $(\tau,Z)\in\mathbb{X}$, from equation \eqref{cequa2-pdird}, we have that
  \begin{align*}
    &\tau\Delta t+\langle \Delta X,Z\rangle
    =\tau\Delta t+\langle \Delta\widetilde{X},\widetilde{Z}\rangle
    =\tau\Delta t +\langle \Delta\widetilde{X}_1,\widetilde{Z}_1\rangle
      +\langle \Delta\widetilde{X}_2,\widetilde{Z}_2\rangle\nonumber\\
    &=\tau \Delta t +\!\langle \mathcal{G}(\Delta\widetilde{X}_1),\mathcal{G}(\widetilde{Z}_1)\rangle
      +\!\langle \mathcal{H}(\Delta\widetilde{X}_1),\mathcal{H}(\widetilde{Z}_1)\rangle
      +\langle \Delta\widetilde{X}_{\alpha c},\widetilde{Z}_{\alpha c}\rangle
      +\langle \Delta\widetilde{X}_{\beta c},\widetilde{Z}_{\beta c}\rangle \nonumber\\
   &=\tau \Delta t+\sum_{l=1}^{r_1}\langle \mathcal{G}(\Delta\widetilde{X}_{a_la_l}),\mathcal{G}(\widetilde{Z}_{a_la_l})\rangle
        +\langle \mathcal{H}(\Delta\widetilde{X}_1),\mathcal{H}(\widetilde{Z}_1)\rangle
        +\langle \Delta\widetilde{X}_{\alpha c},\widetilde{Z}_{\alpha c}\rangle
      +\langle \Delta\widetilde{X}_{\beta c},\widetilde{Z}_{\beta c}\rangle\nonumber\\
    &\quad +2\big[\langle [\mathcal{G}(\Delta\widetilde{X}_1)]_{\alpha\beta},[\mathcal{G}(\widetilde{Z}_1)]_{\alpha\beta}\rangle
          \!+\!\langle[\mathcal{G}(\Delta\widetilde{X}_1)]_{\alpha\gamma},[\mathcal{G}(\widetilde{Z}_1)]_{\alpha\gamma}\rangle
          \!+\!\langle [\mathcal{G}(\Delta\widetilde{X}_1)]_{\beta\gamma},
          [\mathcal{G}(\widetilde{Z}_1)]_{\beta\gamma}\rangle\big]
  \end{align*}
  where the third equality is using $\langle \mathcal{G}(\Delta\widetilde{X}_1),\mathcal{H}(\widetilde{Z}_1)\rangle=0$.
  By \eqref{equa7-pdird-case1} it follows that
  \[
  \langle [\mathcal{H}(\Delta\widetilde{X}_1)]_{a_la_{l'}},[\mathcal{H}(\widetilde{Z}_1)]_{a_la_{l'}}\rangle
  =\langle (\Theta_2)_{a_la_{l'}}\circ[\mathcal{H}(\Delta\widetilde{\Gamma}_1)]_{a_la_{l'}},
   [\mathcal{H}(\widetilde{Z}_1)]_{a_la_{l'}}\rangle
  \]
  for $l=1,\ldots,r_1$ and $l'=1,2,\ldots,r\!+\!1$,
  or $l=r_1\!+\!1,\ldots,r\!+\!1$ and $l'=1,\ldots,r_1$, where
  \(
   (\Theta_2)_{a_la_{l'}}=[E_{a_la_{l'}}-(\mathcal{E}_2)_{a_la_{l'}}]
    \oslash(\mathcal{E}_2)_{a_la_{l'}}
  \)
  with ``$\oslash$'' denoting the division of entries.
  While from equations \eqref{equa4-pdird-case1}-\eqref{equa6-pdird-case1} and
  equation \eqref{cequa1-pdird} it immediately follows that
  \begin{numcases}{}
  \langle [\mathcal{G}(\Delta\widetilde{X}_1)]_{a_la_{l'}},[\mathcal{G}(\widetilde{Z}_1)]_{a_la_{l'}}\rangle
  =\langle (\Theta_1)_{a_la_{l'}}\circ[\mathcal{G}(\Delta\widetilde{\Gamma}_1)]_{a_la_{l'}},
   [\mathcal{G}(\widetilde{Z}_1)]_{a_la_{l'}}\rangle,\nonumber\\
   \langle \Delta\widetilde{X}_{\alpha c},\widetilde{Z}_{\gamma c}\rangle
   =\big\langle[(E-\mathcal{F}_{\alpha c})\oslash\mathcal{F}_{\alpha c}]\circ\Delta\widetilde{\Gamma}_{\alpha c},
     \widetilde{Z}_{\alpha c}\big\rangle,\nonumber\\
    \langle \Delta\widetilde{X}_{\beta c},\widetilde{Z}_{\beta c}\rangle
   =\big\langle[(E-\mathcal{F}_{\beta c})\oslash\mathcal{F}_{\beta c}]\circ\Delta\widetilde{\Gamma}_{\beta c},
     \widetilde{Z}_{\beta c}\big\rangle.\nonumber
  \end{numcases}{}
 for $l,l'=1,2,\ldots,r\!+\!1$ and $l\ne l'$, where
  \(
   (\Theta_1)_{a_la_{l'}}=[E_{a_la_{l'}}-(\mathcal{E}_1)_{a_la_{l'}}]
    \oslash(\mathcal{E}_1)_{a_la_{l'}}.
  \)
  From the last two groups of equalities, an elementary calculation yields that
  \begin{align*}
    &\langle \mathcal{H}(\Delta\widetilde{X}_1),\mathcal{H}(\widetilde{Z}_1)\rangle
        +\langle \Delta\widetilde{X}_{\alpha c},\widetilde{Z}_{\alpha c}\rangle+\langle \Delta\widetilde{X}_{\beta c},\widetilde{Z}_{\beta c}\rangle
    +2\langle [\mathcal{G}(\Delta\widetilde{X}_1)]_{\alpha\beta},[\mathcal{G}(\widetilde{Z}_1)]_{\alpha\beta}\rangle\\
   &+2\big[\langle[\mathcal{G}(\Delta\widetilde{X}_1)]_{\alpha\gamma},[\mathcal{G}(\widetilde{Z}_1)]_{\alpha\gamma}\rangle
          \!+\!\langle [\mathcal{G}(\Delta\widetilde{X}_1)]_{\beta\gamma},
          [\mathcal{G}(\widetilde{Z}_1)]_{\beta\gamma}\rangle\big]\\
   &=\langle\mathfrak{X}(\Delta\widetilde{\Gamma}),\widetilde{Z}\rangle
   =\langle \overline{U}\mathfrak{X}(\Delta\widetilde{\Gamma})\overline{V}^{\mathbb{T}},Z\rangle
  \end{align*}
  where $\mathfrak{X}\!: \mathbb{R}^{m\times n}\to\mathbb{R}^{m\times n}$ is a mapping
  with $\mathfrak{X}(A)$ for any $A\in\mathbb{R}^{m\times n}$ defined by
  \begin{align}\label{MFX-Case1}
   \mathfrak{X}(A)&:=\left[\begin{matrix}
     0_{\alpha\alpha}& (\Theta_1)_{\alpha\beta}\circ [\mathcal{G}(A_1)]_{\alpha\beta}
     &(\Theta_1)_{\alpha\gamma}\circ [\mathcal{G}(A_1)]_{\alpha\gamma}
     &\widehat{\mathcal{F}}_{\alpha c}\circ A_{\alpha c}\\
     (\Theta_1)_{\beta\alpha}\circ [\mathcal{G}(A_1)]_{\beta\alpha}&0_{\beta\beta}
     &(\Theta_1)_{\beta\gamma}\circ [\mathcal{G}(A_1)]_{\beta\gamma}
     &\widehat{\mathcal{F}}_{\beta c}\circ A_{\beta c}\\
     (\Theta_1)_{\gamma\alpha}\circ [\mathcal{G}(A_1)]_{\gamma\alpha}
     &(\Theta_1)_{\gamma\beta}\circ [\mathcal{G}(A_1)]_{\gamma\beta}&0_{\gamma\gamma}
     &0_{\gamma c}\nonumber\\
    \end{matrix}\right]\\
   &\quad+\left[\begin{matrix}
     (\Theta_2)_{\alpha\alpha}\circ \mathcal{H}(A_{\alpha\alpha})& (\Theta_2)_{\alpha\beta}\circ [\mathcal{H}(A_1)]_{\alpha\beta}
     &(\Theta_2)_{\alpha\gamma}\circ [\mathcal{H}(A_1)]_{\alpha\gamma}
     &0_{\alpha c}\\
     (\Theta_2)_{\beta\alpha}\circ [\mathcal{H}(A_1)]_{\beta\alpha}& (\Theta_2)_{\beta\beta}\circ \mathcal{H}(A_{\beta\beta})
     &(\Theta_2)_{\beta\gamma}\circ [\mathcal{H}(A_1)]_{\beta\gamma}
     &0_{\beta c}\\
     (\Theta_2)_{\gamma\alpha}\circ [\mathcal{G}(A_1)]_{\gamma\alpha}
     &(\Theta_2)_{\gamma\beta}\circ [\mathcal{G}(A_1)]_{\gamma\beta}&0_{\gamma\gamma}&0_{\gamma c}\\
    \end{matrix}\right]
  \end{align}
  with $\widehat{\mathcal{F}}_{\overline{\gamma}c}
  =(E_{\overline{\gamma}c}-\mathcal{F}_{\overline{\gamma}c})\oslash\mathcal{F}_{\overline{\gamma}c}$.
  Thus, for any $(\tau,Z)\in\mathbb{X}$, it always holds that
  \[
    \langle (\Delta t,\Delta X),(\tau,Z)\rangle
    -\langle(0,\overline{U}\mathfrak{X}(\Delta\widetilde{\Gamma})\overline{V}^{\mathbb{T}}),(\tau,Z)\rangle
    =\tau \Delta t+{\textstyle\sum_{l=1}^{r_1}}\langle \mathcal{G}(\Delta\widetilde{X}_{a_la_l}),\mathcal{G}(\widetilde{Z}_{a_la_l})\rangle.
  \]
  In view of this, the desired conclusion of this case then follows.

  \medskip
  \noindent
  {\bf(ii)} Let $r_0\in\{1,\ldots,r\}$ be such that $\alpha=\bigcup_{l=1}^{r_0}a_l$
  and $\beta=\bigcup_{l=r_0+1}^{r+1}a_l$, where $\alpha$ and $\beta$ are defined by
  \eqref{abg-case2} with $k_0\in[0,k\!-\!1]$ such that \eqref{sigmabar-case2} holds.
  For any $(\tau,Z)\in\mathbb{X}$, by \eqref{equa3-pdird-case2}
  \begin{align*}
    &\tau\Delta t+\langle \Delta X,Z\rangle
    =\tau\Delta t+\langle \Delta\widetilde{X},\widetilde{Z}\rangle
    =\tau\Delta t +\langle \Delta\widetilde{X}_1,\widetilde{Z}_1\rangle
      +\langle \Delta\widetilde{X}_2,\widetilde{Z}_2\rangle\nonumber\\
    &=\tau \Delta t +\!\langle \mathcal{G}(\Delta\widetilde{X}_1),\mathcal{G}(\widetilde{Z}_1)\rangle
      +\!\langle \mathcal{H}(\Delta\widetilde{X}_1),\mathcal{H}(\widetilde{Z}_1)\rangle
      +\langle \Delta\widetilde{X}_{\alpha c},\widetilde{Z}_{\alpha c}\rangle
      +\langle \Delta\widetilde{X}_{\beta c},\widetilde{Z}_{\beta c}\rangle \nonumber\\
   &=\tau \Delta t+\sum_{l=1}^{r_0}\langle \mathcal{G}(\Delta\widetilde{X}_{a_la_l}),\mathcal{G}(\widetilde{Z}_{a_la_l})\rangle
       +\langle \Delta\widetilde{X}_{\beta\beta},\widetilde{Z}_{\beta\beta}\rangle
       +\langle \Delta\widetilde{X}_{\alpha c},\widetilde{Z}_{\alpha c}\rangle
       +\langle \Delta\widetilde{X}_{\beta c},\widetilde{Z}_{\beta c}\rangle\nonumber\\
    &\quad +2\langle [\mathcal{G}(\Delta\widetilde{X}_1)]_{\alpha\beta},[\mathcal{G}(\widetilde{Z}_1)]_{\alpha\beta}\rangle
           +\langle [\mathcal{H}(\Delta\widetilde{X}_1)]_{\alpha\alpha},[\mathcal{H}(\widetilde{Z}_1)]_{\alpha\alpha}\rangle
          \!+\!2\langle[\mathcal{H}(\Delta\widetilde{X}_1)]_{\alpha\beta},[\mathcal{H}(\widetilde{Z}_1)]_{\alpha\beta}\rangle.
  \end{align*}
  From equation \eqref{equa5-pdird-case2} and equations \eqref{equa7-pdird-case2}-\eqref{equa9-pdird-case2},
  it is immediate to obtain that
  \begin{subnumcases}{}
   \label{temp-equa1-case32}
   \langle [\mathcal{G}(\Delta\widetilde{X}_1)]_{a_la_{l'}},[\mathcal{G}(\widetilde{Z}_1)]_{a_la_{l'}}\rangle
   =\langle (\Theta_1)_{a_la_{l'}}\circ[\mathcal{G}(\Delta\widetilde{\Gamma}_1)]_{a_la_{l'}},
    [\mathcal{G}(\widetilde{Z}_1)]_{a_la_{l'}}\rangle,\\
  \label{temp-equa2-case32}
  \langle [\mathcal{H}(\Delta\widetilde{X}_1)]_{a_la_{l'}},[\mathcal{H}(\widetilde{Z}_1)]_{a_la_{l'}}\rangle
  =\langle(\Theta_2)_{a_la_{l'}}\circ \mathcal{H}(\Delta\widetilde{\Gamma}_{a_la_{l'}}),
  [\mathcal{H}(\widetilde{Z}_1)]_{a_la_{l'}}\rangle,\\
  \langle \Delta\widetilde{X}_{\alpha c},\widetilde{Z}_{\alpha c}\rangle
  =\langle \widehat{\mathcal{F}}_{\alpha c}\circ \Delta\widetilde{\Gamma}_{\alpha c},\widetilde{Z}_{\alpha c}\rangle,\nonumber
  \end{subnumcases}
  where $(\Theta_1)_{a_la_{l'}}=(E_{a_la_{l'}}-(\mathcal{E}_1)_{a_la_{l'}})\oslash(\mathcal{E}_1)_{a_la_{l'}}$
  for $l=1,2,\ldots,r_0$ and $l'=r_0\!+\!1,\ldots,r\!+\!1$,
  $(\Theta_2)_{a_la_{l'}}=(E_{a_la_{l'}}-(\mathcal{E}_2)_{a_la_{l'}})\oslash(\mathcal{E}_2)_{a_la_{l'}}$
  for $l,l'=1,\ldots,r_0$, or $l=1,\ldots,r_0$ and $l'=r_0\!+\!1,\ldots,r\!+\!1$,
  and $\widehat{\mathcal{F}}_{\alpha c}=(E_{\alpha c}-\mathcal{F}_{\alpha c})\oslash \mathcal{F}_{\alpha c}$.
  Let $\mathfrak{X}\!: \mathbb{R}^{m\times n}\to\mathbb{R}^{m\times n}$ be defined by
 \begin{align}\label{MFX-Case2}
   \mathfrak{X}(A)
   &:=\left[\begin{matrix}
     0_{\alpha\alpha} & (\Theta_1)_{\alpha\beta}\circ [\mathcal{G}(A_1)]_{\alpha\beta}
     & \widehat{\mathcal{F}}_{\alpha c}\circ A_{\alpha c}\\
     (\Theta_1)_{\beta\alpha}\circ [\mathcal{G}(A_1)]_{\beta\alpha}
     &0_{\beta\beta}& 0_{\beta c}
    \end{matrix}\right]\nonumber\\
   &\quad+\left[\begin{matrix}
     (\Theta_2)_{\alpha\alpha}\circ \mathcal{H}(A_{\alpha\alpha})
     & (\Theta_2)_{\alpha\beta}\circ [\mathcal{H}(A_1)]_{\alpha\beta}
     & 0_{\alpha c}\\
     (\Theta_2)_{\beta\alpha}\circ [\mathcal{H}(A_1)]_{\beta\alpha}
     &0_{\beta\beta}& 0_{\beta c}
    \end{matrix}\right]\quad \forall A\in\mathbb{R}^{m\times n}.
 \end{align}
 Then, after an elementary calculation, it is easy to calculate that
 \begin{align*}
   &2\langle [\mathcal{G}(\Delta\widetilde{X}_1)]_{\alpha\beta},[\mathcal{G}(\widetilde{Z}_1)]_{\alpha\beta}\rangle
   +\langle [\mathcal{H}(\Delta\widetilde{X}_1)]_{\alpha\alpha},[\mathcal{H}(\widetilde{Z}_1)]_{\alpha\alpha}\rangle \\
   &\!+\!2\langle[\mathcal{H}(\Delta\widetilde{X}_1)]_{\alpha\beta},[\mathcal{H}(\widetilde{Z}_1)]_{\alpha\beta}\rangle
  +\langle \Delta\widetilde{X}_{\alpha c},\widetilde{Z}_{\alpha c}\rangle
  =\langle \mathfrak{X}(\Delta\widetilde{\Gamma}),\widetilde{Z}\rangle
  =\langle\overline{U}\mathfrak{X}(\Delta\widetilde{\Gamma})\overline{V}^{\mathbb{T}},Z\rangle.
 \end{align*}
 Thus, for any $(\tau,Z)\in\mathbb{X}$, it always holds that
 \begin{align*}
    &\langle (\Delta t,\Delta X),(\tau,Z)\rangle
    -\langle(0,\overline{U}\mathfrak{X}(\Delta\widetilde{\Gamma})\overline{V}^{\mathbb{T}}),(\tau,Z)\rangle \\
    &=\tau \Delta t+{\textstyle\sum_{l=1}^{r_0}}\langle \mathcal{G}(\Delta\widetilde{X}_{a_la_l}),\mathcal{G}(\widetilde{Z}_{a_la_l})\rangle
     +\langle\Delta\widetilde{X}_{\beta\beta},\widetilde{Z}_{\beta\beta}\rangle
     +\langle \Delta\widetilde{X}_{\beta c},\widetilde{Z}_{\beta c}\rangle.
 \end{align*}
  By the proof of Case (ii) of Lemma \ref{Lemma1-critical-cone},
  $(\tau,Z)\in\mathcal{T}_K(\overline{t},\overline{X})\cap (\overline{\zeta},\overline{\Gamma})^{\perp}$
  if and only if $[\widetilde{Z}_{\beta\beta}\ \ \widetilde{Z}_{\beta c}]$ has the structure in \eqref{Zbb-case2}.
  For such $(\tau,Z)$, we have that
  $\langle \Delta\widetilde{X}_{\beta c},\widetilde{Z}_{\beta c}\rangle=\langle \Delta\widetilde{X}_{b c},\widetilde{Z}_{bc}\rangle$ and
  \[
    \langle\Delta\widetilde{X}_{\beta\beta},\widetilde{Z}_{\beta\beta}\rangle
    ={\textstyle\sum_{l=r_0+1}^{r}}\langle \mathcal{G}(\Delta\widetilde{X}_{a_la_l}),\mathcal{G}(\widetilde{Z}_{a_la_l})\rangle
     +\langle \widetilde{X}_{bb},\widetilde{Z}_{bb}\rangle
  \]
  The desired result then follows from the last two equalities.
 \end{proof}

  \medskip
  \noindent
  {\bf\large The proof of Proposition \ref{main-prop}.} \begin{aproof}
  Notice that $(\overline{t},\overline{X})=\Pi_{K}(t,X),\,
  (\overline{\zeta},\overline{\Gamma})=\Pi_{K^{\circ}}(t,X)$ and
  $\langle (\overline{t},\overline{X}),(\overline{\zeta},\overline{\Gamma})\rangle=0$.
  Let $((\Delta t,\Delta X),(\Delta\zeta,\Delta\Gamma))\in{\rm gph}D\mathcal{N}_{K^\circ} \big((\overline{\zeta},\overline{\Gamma})|(\overline{t},\overline{X})\big)$.
  By the third equivalence of Lemma \ref{tcone-lemma},
  \(
  ((\Delta t,\Delta X),(\Delta\zeta,\Delta\Gamma))\in \mathcal{T}_{K^\circ\times K}\big((\overline{\zeta},\overline{\Gamma}),(\overline{t},\overline{X}))
  \)
  and
    \begin{equation}\label{proj-equa}
    \Pi_{K}'\big((t,X);(\Delta t,\Delta X)+(\Delta\zeta,\Delta\Gamma)\big)
    =(\Delta\zeta,\Delta\Gamma).
  \end{equation}
  Then $(\Delta\zeta,\Delta\Gamma) \in \mathcal{T}_K(\overline{t},\overline{X})$.
  Thus, to prove that $(\Delta\zeta,\Delta\Gamma)\in\mathcal{C}_K(X)$,
  we only need to show that $(\Delta\zeta,\Delta\Gamma) \in (\overline{\zeta},\overline{\Gamma})^{\perp}$.
  We proceed the arguments by three cases as shown below.

  \medskip
  \noindent
  {\bf Case 1:} $(t,X)\in{\rm int}\,K$. Now $(\overline{\zeta},\overline{\Gamma})=(0,0)$
  which implies equation \eqref{equa1-main-prop}, and $\Pi_{K}$ is directionally
  differentiable at $(t,X)$ with $\Pi_{K}'((t,X);(\tau,H))=(\tau,H)$
  for $(\tau,H)\in\mathbb{R}\times\mathbb{R}^{m\times n}$. The latter means that
  $\Pi_{K}'((t,X);(\Delta t,\Delta X)+(\Delta\zeta,\Delta\Gamma))
  =(\Delta t+\Delta\zeta,\Delta X+\Delta\Gamma)$. Together with \eqref{proj-equa},
  we have $(\Delta t,\Delta X)=(0,0)\in[\mathcal{C}_K(\mathcal{X})]^{\circ}$,
  and then \eqref{equa2-main-prop} follows. Equation \eqref{equa3-main-prop} directly
  follows from $(\overline{\zeta},\overline{\Gamma})=(0,0)$ and $(\Delta t,\Delta X)=(0,0)$.

  \medskip
  \noindent
  {\bf Case 2:} $(t,X)\in{\rm int}\,K^{\circ}$. Now $(\overline{t},\overline{X})=(0,0)$
  and $(\overline{\zeta},\overline{\Gamma})=(t,X)\in{\rm int}\,K^{\circ}$.
  Then
  \(
  \mathcal{T}_K(\overline{t},\overline{X})\cap(\overline{\zeta},\overline{\Gamma})^{\perp}
  =K\cap (\overline{\zeta},\overline{\Gamma})^{\perp}=\{(0,0)\}.
  \)
  So, $(\Delta t,\Delta X)\in\mathbb{X}=[\mathcal{C}_K(\mathcal{X})]^{\circ}$,
  and \eqref{equa2-main-prop} follows.
  Notice that $\Pi_{K}$ is directionally differentiable at $(t,X)$
  with $\Pi_{K}'((t,X);(\tau,H))=(0,0)$ for any $(\tau,H)\in\mathbb{R}\times\mathbb{R}^{m\times n}$,
  which means that $\Pi_{K}'((t,X);(\Delta t,\Delta X)+(\Delta\zeta,\Delta\Gamma))=(0,0)$.
  Together with \eqref{proj-equa}, we obtain $(\Delta \zeta,\Delta\Gamma)=(0,0)\in\mathcal{C}_K(\mathcal{X})$,
  and \eqref{equa1-main-prop} follows. Since $(\overline{t},\overline{X})=(0,0)$,
  \eqref{equa3-main-prop} directly follows by $(\Delta \zeta,\Delta\Gamma)=(0,0)$
  and Lemma \ref{Lemma-Upsilon}.

  \medskip
  \noindent
  {\bf Case 3:} $(t,X)\notin{\rm int}\,K\cup{\rm int}\,K^{\circ}$.
  Let $X$ have the SVD as in \eqref{X-SVD} with the index sets
  $a,b,c$ and $a_l\ (l=1,2,\ldots,r)$ defined by \eqref{abc} and \eqref{al}.
  Write $\sigma=\sigma(X)$ and $\overline{\sigma}=\sigma(\overline{X})$.
  Let $\tau=\Delta t+\Delta\zeta$ and $H=\Delta X+\!\Delta\Gamma$.
  We proceed the arguments by two subcases.

  \medskip
  \noindent
  {\bf Subcase 3.1:} $\overline{\sigma}_{k}>0$. Let $r_0,r_1\!\in\{1,2,\ldots,r\}$
  be such that $\alpha=\bigcup_{l=1}^{r_0}a_l$, $\beta=\bigcup_{l=r_0+1}^{r_1}a_l$
  and $\gamma=\bigcup_{l=r_1+1}^{r+1}a_l$, where $\alpha,\beta,\gamma$ and
  $\overline{\gamma}$ are defined by \eqref{abg-case1} with integers
  $k_0\in[0,k\!-\!1]$ and $k_1\in[k,m]$ such that \eqref{sigmabar-case1} holds.
  Also, equation \eqref{WGamma-SVD1} still holds. In addition,
  by \eqref{proj-equa} and Case (i) of Lemma \ref{Lemma2-pdird},
  equations \eqref{equa0-pdird-case1}-\eqref{cequa2-pdird} hold for this case.

  \medskip

  We first prove that equation \eqref{equa1-main-prop} holds.
  By equations \eqref{equa0-pdird-case1}-\eqref{equa1-pdird-case1},
  the definition of the operator $\bm D$ in \eqref{MD-Case1},
  and equation \eqref{Proj-C1set} with $\mathcal{C}_1$ given by \eqref{C1-cone},
  we have that
  \begin{equation}\label{Case31-equa}
    \big(\Delta\zeta,\bm D(\Delta\widetilde{\Gamma})\big)
    =\big(\Phi_0(\tau,\bm D(\widetilde{H})),\Phi_1(\tau,\bm D(\widetilde{H})),\ldots,
    \Phi_{r_1}(\tau,\bm D(\widetilde{H}))\big)
    =\Pi_{\mathcal{C}_1}(\tau,\bm D(\widetilde{H})).
  \end{equation}
  By Lemma \ref{Lemma1-critical-cone}, $(\Delta\zeta,\Delta\Gamma)\in\mathcal{C}_K(\mathcal{X})$,
  i.e., \eqref{equa1-main-prop} follows. By the Moreau decomposition \cite{Moreau65},
  \begin{align*}
    \big(\Delta t+\!\Delta\zeta,\bm D(\Delta\widetilde{X}\!+\!\Delta\widetilde{\Gamma})\big)
    &=\Pi_{\mathcal{C}_1}\big(\Delta t+\!\Delta\zeta,\bm D(\Delta\widetilde{X}\!+\!\Delta\widetilde{\Gamma})\big)
     +\Pi_{\mathcal{C}_1^{\circ}}\big(\Delta t+\Delta\zeta,\bm D(\Delta\widetilde{X}\!+\!\Delta\widetilde{\Gamma})\big)\\
    &=\big(\Delta\zeta,\bm D(\Delta\widetilde{\Gamma})\big)+\Pi_{\mathcal{C}_1^{\circ}}\big(\Delta t+\Delta\zeta,\bm D(\Delta\widetilde{X}+\Delta\widetilde{\Gamma})\big),
  \end{align*}
  which implies that $(\Delta t,\bm D(\Delta\widetilde{X}))
  =\Pi_{\mathcal{C}_1^{\circ}}(\Delta t+\Delta\zeta,\bm D(\Delta\widetilde{X}+\Delta\widetilde{\Gamma}))$.
  Notice that
  \[
    \tau\Delta t +{\textstyle\sum_{l=1}^{r_1}}\big\langle \mathcal{G}(\Delta\widetilde{X}_{a_la_l}),\mathcal{G}(\widetilde{Z}_{a_la_l})\big\rangle
    =\big\langle(\Delta t,\bm D(\Delta\widetilde{X})),(\tau,\bm D(\widetilde{Z}))\big\rangle\le 0
  \]
  for any $(\tau,Z)\in\mathcal{T}_K(\overline{t},\overline{X})\cap(\overline{\zeta},\overline{\Gamma})^{\perp}$,
  where the inequality is due to Lemma \ref{Lemma1-critical-cone}. By Case (i) of Lemma \ref{Lemma2-critical-cone},
  $(\Delta t,\Delta X)-(0,\overline{U}\mathfrak{X}(\Delta\widetilde{\Gamma})\overline{V}^{\mathbb{T}})\in[\mathcal{C}_K(\mathcal{X})]^{\circ}$.
  Thus, equation \eqref{equa2-main-prop} holds.

  \medskip

  We next prove that \eqref{equa3-main-prop} holds.
  By $(\Delta t,\bm D(\Delta\widetilde{X}))
  =\Pi_{\mathcal{C}_1^{\circ}}(\Delta t+\Delta\zeta,\bm D(\Delta\widetilde{X}+\Delta\widetilde{\Gamma}))$,
  \begin{align*}
  &\Delta t\Delta\zeta+{\textstyle\sum_{l=1}^{r_1}}\big\langle \mathcal{G}(\Delta\widetilde{X}_{a_la_l}),\mathcal{G}(\Delta\widetilde{\Gamma}_{a_la_l})\big\rangle
  = \big\langle (\Delta t,\bm D(\Delta\widetilde{X})),(\Delta\zeta,\bm D(\Delta\widetilde{\Gamma}))\big\rangle\nonumber\\
  =&\ \langle \Pi_{\mathcal{C}_1^{\circ}}(\Delta t+\Delta\zeta,\bm D\big(\Delta\widetilde{X}+\Delta\widetilde{\Gamma})\big),
  \Pi_{\mathcal{C}_1}\big(\Delta t+\Delta\zeta,\bm D(\Delta\widetilde{X}+\Delta\widetilde{\Gamma})\big)\rangle= 0.
 \end{align*}
  Together with the arguments for Case (i) of Lemma \ref{Lemma2-critical-cone},
  we have that
  \begin{align}\label{main-equa1}
   &\langle(\Delta t,\Delta X),(\Delta\zeta,\Delta\Gamma)\rangle
    =\langle \mathfrak{X}(\Delta\widetilde{\Gamma}),\Delta\widetilde{\Gamma}\rangle\nonumber\\
   &= \langle \mathcal{H}(\Delta\widetilde{X}_1),\mathcal{H}(\Delta\widetilde{\Gamma}_1)\rangle
      \!+\!\langle\Delta\widetilde{X}_{\overline{\gamma}c},\Delta\widetilde{\Gamma}_{\overline{\gamma}c}\rangle
      +2\langle [\mathcal{G}(\Delta\widetilde{X}_1)]_{\alpha\beta},[\mathcal{G}(\Delta\widetilde{\Gamma}_1)]_{\alpha\beta}\rangle\nonumber\\
   &\quad +2\big[\langle[\mathcal{G}(\Delta\widetilde{X}_1)]_{\alpha\gamma},[\mathcal{G}(\Delta\widetilde{\Gamma}_1)]_{\alpha\gamma}\rangle
          \!+\!\langle [\mathcal{G}(\Delta\widetilde{X}_1)]_{\beta\gamma},
          [\mathcal{G}(\Delta\widetilde{\Gamma}_1)]_{\beta\gamma}\rangle\big].
  \end{align}
  Next we consider each term on the right hand side of \eqref{main-equa1} separately.
  By \eqref{cequa1-pdird}, we have
  \begin{equation}\label{temp-equa1}
  \langle\Delta\widetilde{X}_{\overline{\gamma}c},\Delta\widetilde{\Gamma}_{\overline{\gamma}c}\rangle
   =\sum_{l=1}^{r_0}\frac{\theta}{\overline{\nu}_l}\|\Delta\widetilde{\Gamma}_{a_lc}\|^2
   +\sum_{l=r_0+1}^{r_1}\frac{\theta\overline{u}_{l'}}{\overline{\nu}}\|\Delta\widetilde{\Gamma}_{a_{l'}c}\|^2.
  \end{equation}
  While from equations \eqref{equa4-pdird-case1}-\eqref{equa6-pdird-case1} it is not difficult to obtain that
  \begin{numcases}{}
  \langle [\mathcal{G}(\Delta\widetilde{X}_1)]_{\alpha\beta},[\mathcal{G}(\Delta\widetilde{\Gamma}_1)]_{\alpha\beta}\rangle
  =\sum_{l=1}^{r_0}\sum_{l'=r_0+1}^{r_1}\frac{\theta-\theta\overline{u}_{l'}}{\overline{\nu}_l-\overline{\nu}_{l'}}
   \|[\mathcal{G}(\Delta\widetilde{\Gamma}_1)]_{a_la_{l'}}\|^2,\nonumber\\
   \langle[\mathcal{G}(\Delta\widetilde{X}_1)]_{\alpha\gamma},[\mathcal{G}(\Delta\widetilde{\Gamma}_1)]_{\alpha\gamma}\rangle
 =\sum_{l=1}^{r_0}\sum_{l'=r_1+1}^{r+1}\frac{\theta}{\overline{\nu}_l-\overline{\nu}_{l'}}
   \|[\mathcal{G}(\Delta\widetilde{\Gamma}_1)]_{a_la_{l'}}\|^2,\nonumber\\
  \langle [\mathcal{G}(\Delta\widetilde{X}_1)]_{\beta\gamma},[\mathcal{G}(\Delta\widetilde{\Gamma}_1)]_{\beta\gamma}\rangle
  =\sum_{l=r_0+1}^{r_1}\sum_{l'=r_1+1}^{r+1}\frac{\theta\overline{u}_{l}}{\overline{\nu}_l-\overline{\nu}_{l'}}\nonumber
   \|[\mathcal{G}(\Delta\widetilde{\Gamma}_1)]_{a_la_{l'}}\|^2.\nonumber
  \end{numcases}{}
  Adding the last three equalities together and making suitable rearrangement yields that
  \begin{align}\label{temp-equa2}
  &2\big[\langle [\mathcal{G}(\Delta\widetilde{X}_1)]_{\alpha\beta},[\mathcal{G}(\Delta\widetilde{\Gamma}_1)]_{\alpha\beta}\rangle
   +\langle[\mathcal{G}(\Delta\widetilde{X}_1)]_{\alpha\gamma},[\mathcal{G}(\Delta\widetilde{\Gamma}_1)]_{\alpha\gamma}\rangle
   +\langle [\mathcal{G}(\Delta\widetilde{X}_1)]_{\beta\gamma},[\mathcal{G}(\Delta\widetilde{\Gamma}_1)]_{\beta\gamma}\rangle\big]\nonumber\\
  =&\ -2\sum_{l=1}^{r_0}\sum_{l'=r_0+1}^{r+1}\frac{\theta}{\overline{\nu}_{l'}-\overline{\nu}_l}
       \|[\mathcal{G}(\Delta\widetilde{\Gamma}_1)]_{a_la_{l'}}\|^2\nonumber\\
  &\ -\!2\!\sum_{l'=r_0+1}^{r_1}\Big(\sum_{l=1}^{r_0}\frac{\theta\overline{u}_{l'}}{\overline{\nu}_l-\overline{\nu}_{l'}}
   \|[\mathcal{G}(\Delta\widetilde{\Gamma}_1)]_{a_la_{l'}}\|^2
        +\sum_{l'=r_1+1}^{r+1}\frac{\theta\overline{u}_{l}}{\overline{\nu}_{l'}-\overline{\nu}_l}
   \|[\mathcal{G}(\Delta\widetilde{\Gamma}_1)]_{a_la_{l'}}\|^2\Big).
  \end{align}
  In addition, by using equation \eqref{equa7-pdird-case1}, we can calculate that
  \begin{align}\label{temp-equa3}
   &\quad \langle\mathcal{H}(\Delta\widetilde{X}_1),\mathcal{H}(\Delta\widetilde{\Gamma}_1)\rangle
   =\langle \mathcal{H}(\Delta\widetilde{X}_{\alpha\alpha}),\mathcal{H}(\Delta\widetilde{\Gamma}_{\alpha\alpha})\rangle
    +2\langle [\mathcal{H}(\Delta\widetilde{X}_1)]_{\alpha\beta},[\mathcal{H}(\Delta\widetilde{\Gamma}_1)]_{\alpha\beta}\rangle\nonumber\\
   &\quad +\langle \mathcal{H}(\Delta\widetilde{X}_{\beta\beta}),\mathcal{H}(\Delta\widetilde{\Gamma}_{\beta\beta})\rangle
       +2\langle [\mathcal{H}(\Delta\widetilde{X}_1)]_{\alpha\gamma},[\mathcal{H}(\Delta\widetilde{\Gamma}_1)]_{\alpha\gamma}\rangle
        +2\langle[\mathcal{H}(\Delta\widetilde{X}_1)]_{\beta\gamma},[\mathcal{H}(\Delta\widetilde{\Gamma}_1)]_{\beta\gamma}\rangle\nonumber\\
   &=2\sum_{l=1}^{r_0}\sum_{l'=1}^{r_0}\frac{\theta}{\overline{\nu}_l+\overline{\nu}_{l'}}\|[\mathcal{H}(\Delta\widetilde{\Gamma}_1)]_{a_la_{l'}}\|^2
     +2\sum_{l=r_0+1}^{r_1}\sum_{l'=r_0+1}^{r_1}\frac{\theta\overline{u}_l}{2\overline{\nu}}\|[\mathcal{H}(\Delta\widetilde{\Gamma}_1)]_{a_la_{l'}}\|^2\nonumber\\
   &\quad +2\!\sum_{l=1}^{r_0}\sum_{l'=r_0+1}^{r_1}\frac{\theta+\theta\overline{u}_{l'}}{\overline{\nu}_l+\overline{\nu}}
          \|[\mathcal{H}(\Delta\widetilde{\Gamma}_1)]_{a_la_{l'}}\|^2
   +2\!\sum_{l=1}^{r_0}\sum_{l'=r_1+1}^{r+1}\frac{\theta}{\overline{\nu}_l+\overline{\nu}_{l'}}
       \big\|[\mathcal{H}(\Delta\widetilde{\Gamma}_1)]_{a_la_{l'}}\big\|^2\nonumber\\
   &\quad +2\sum_{l=r_0+1}^{r_1}\sum_{l'=r_1+1}^{r+1}\frac{\theta\overline{u}_l}{\overline{\nu}+\overline{\nu}_{l'}}
          \big\|[\mathcal{H}(\Delta\widetilde{\Gamma}_1)]_{a_la_{l'}}\big\|^2\nonumber\\
   &= 2\sum_{l=1}^{r_0}\sum_{l'=1}^{r+1}\frac{\theta}{\overline{\nu}_l+\overline{\nu}_{l'}}
      \|[\mathcal{H}(\Delta\widetilde{\Gamma}_1)]_{a_la_{l'}}\|^2
     +2\sum_{l=r_0+1}^{r_1}\sum_{l'=1}^{r+1}\frac{\theta\overline{u}_l}{\overline{\nu}_{l'}+\overline{\nu}}
      \|[\mathcal{H}(\Delta\widetilde{\Gamma}_1)]_{a_la_{l'}}\|^2.
  \end{align}
  Substituting equations \eqref{temp-equa1}-\eqref{temp-equa3} into equation \eqref{main-equa1} immediately yields that
  \begin{align}\label{one-hand}
  &\langle(\Delta t,\Delta X),(\Delta\zeta,\Delta\Gamma)\rangle
  =\langle \mathfrak{X}(\Delta\widetilde{\Gamma}),\Delta\widetilde{\Gamma}\rangle\nonumber\\
  &\!=2\sum_{l=1}^{r_0}\sum_{l'=1}^{r+1}\frac{\theta}{\overline{\nu}_l+\overline{\nu}_{l'}}
       \|[\mathcal{H}(\Delta\widetilde{\Gamma}_1)]_{a_la_{l'}}\|^2
      -2\sum_{l=1}^{r_0}\sum_{l'=r_0+1}^{r+1}\frac{\theta}{\overline{\nu}_{l'}-\overline{\nu}_l}
       \|[\mathcal{G}(\Delta\widetilde{\Gamma}_1)]_{a_la_{l'}}\|^2 \nonumber\\
  &\quad +\!\sum_{l=1}^{r_0}\frac{\theta}{\overline{\nu}_l}\|\Delta\widetilde{\Gamma}_{a_lc}\|^2
        +2\!\sum_{l=r_0+1}^{r_1}\sum_{l'=1}^{r+1}\frac{\theta\overline{u}_l}{\overline{\nu}_{l'}+\overline{\nu}}
        \|[\mathcal{H}(\Delta\widetilde{\Gamma}_1)]_{a_la_{l'}}\|^2
       +\sum_{l=r_0+1}^{r_1}\frac{\theta\overline{u}_{l'}}{\overline{\nu}}\|\Delta\widetilde{\Gamma}_{a_{l'}c}\|^2\nonumber\\
  &\quad -\!2\!\sum_{l'=r_0+1}^{r_1}\!\Big[\sum_{l=1}^{r_0}\frac{\theta\overline{u}_{l'}}{\overline{\nu}_l-\!\overline{\nu}}
       \|[\mathcal{G}(\Delta\widetilde{\Gamma}_1)]_{a_la_{l'}}\|^2
        \!+\!\sum_{l=r_1+1}^{r+1}\!\frac{\theta\overline{u}_{l}}{\overline{\nu}-\!\overline{\nu}_l}
        \|[\mathcal{G}(\Delta\widetilde{\Gamma}_1)]_{a_la_{l'}}\|^2\Big].
  \end{align}
  On the other hand, by \eqref{WGamma-SVD1} and the definitions of $\mathcal{B}$ and $\overline{P}$
  in Lemma \ref{Lemma-Upsilon}, we have that
  \begin{align*}
   &\overline{\zeta}\sum_{j=1}^{r_0}{\rm tr}\Big(\overline{P}_{\!a_j}^{\mathbb{T}}
     \big[\mathcal{B}(\Delta\Gamma)(\mathcal{B}(\overline{X})\!-\overline{\nu}_jI)^\dag\mathcal{B}(\Delta\Gamma)\big]\overline{P}_{\!a_j}\Big)\\
   &=\sum_{l=1}^{r_0}\sum_{l'=r_0+1}^{r+1}\!\frac{\theta}{\overline{\nu}_{l'}\!-\!\overline{\nu}_l} \|[\mathcal{G}(\Delta\widetilde{\Gamma}_1)]_{a_la_{l'}}\|^2
    -\!\frac{1}{2}\sum_{l=1}^{r_0}\frac{\theta}{\overline{\nu}_l}\|\Delta\widetilde{\Gamma}_{a_lc}\|^2
     -\!\sum_{l=1}^{r_0}\sum_{l'=1}^{r+1}\frac{\theta}{\overline{\nu}_l\!+\!\overline{\nu}_{l'}}
      \|[\mathcal{H}(\Delta\widetilde{\Gamma}_1)]_{a_la_{l'}}\|^2\nonumber
  \end{align*}
  and
  \begin{align*}
   & \Big\langle\Sigma_{\beta\beta}(\overline{\Gamma}),
       \overline{P}_{\!\beta}^{\mathbb{T}}\mathcal{B}(\Delta\Gamma)(\mathcal{B}(\overline{X})-\overline{\nu}I)^\dag\mathcal{B}(\Delta\Gamma)
       \overline{P}_{\!\beta}\Big\rangle\nonumber\\
   &= -\sum_{l=r_0+1}^{r_1}\sum_{l'=1}^{r+1}\frac{\theta\overline{u}_l}{\overline{\nu}_{l'}+\overline{\nu}}
    \|[\mathcal{H}(\Delta\widetilde{\Gamma}_1)]_{a_la_{l'}}\|^2
   -\frac{1}{2}\sum_{l=r_0+1}^{r_1}\frac{\theta\overline{u}_{l'}}{\overline{\nu}}\|\Delta\widetilde{\Gamma}_{a_{l'}c}\|^2,\\
   &\quad+\!\sum_{l'=r_0+1}^{r_1}\Big(\sum_{l=1}^{r_0}\frac{\theta\overline{u}_{l'}}{\overline{\nu}_l-\overline{\nu}}
       \|[\mathcal{G}(\Delta\widetilde{\Gamma}_1)]_{a_la_{l'}}\|^2
        +\sum_{l=r_1+1}^{r+1}\frac{\theta\overline{u}_{l}}{\overline{\nu}-\overline{\nu}_l}
        \|[\mathcal{G}(\Delta\widetilde{\Gamma}_1)]_{a_la_{l'}}\|^2\Big).
  \end{align*}
  By substituting the last two equalities into the expression of
  $\Upsilon_{(\overline{t},\overline{X})}\big((\overline{\zeta},\overline{\Gamma}),(\Delta\zeta,\Delta\Gamma)\big)$
  and then comparing with equation \eqref{one-hand}, we obtain equation \eqref{equa3-main-prop}.

  \medskip
  \noindent
  {\bf Subcase 3.2:} $\overline{\sigma}_k=0$. Let $r_0\in\{1,\ldots,r\}$ be such that
  $\alpha=\bigcup_{l=1}^{r_0}a_l$ and $\beta=\bigcup_{l=r_0+1}^{r+1}a_l$,
  where $\alpha$ and $\beta$ are defined by \eqref{abg-case2} with $k_0\in[0,k\!-\!1]$
  such that equation \eqref{sigmabar-case2} holds. Also, equation \eqref{WGamma-SVD2} holds.
  By \eqref{proj-equa} and Lemma \ref{Lemma2-pdird},
  equations \eqref{equa0-pdird-case2}-\eqref{equa9-pdird-case2} hold.

  \medskip

  We first prove that equation \eqref{equa1-main-prop} holds.
  By equations \eqref{equa0-pdird-case2}-\eqref{equa2-pdird-case2},
  the definition of the operator $\bm D$ in \eqref{MD-Case2},
  and equation \eqref{Proj-C2set} with $\mathcal{C}_2$ given by \eqref{C2-cone},
  we have that
  \begin{equation}\label{Case32-equa}
   (\Delta\zeta,\bm D(\Delta\widetilde{\Gamma}))\!=\!
   \big(\Phi_{0}(\tau,\bm D(\widetilde{H})),\Phi_{1}(\tau,\bm D(\widetilde{H})),\ldots,
   \Phi_{r+1}(\tau,\bm D(\widetilde{H}))\big)
   \!=\Pi_{\mathcal{C}_2}(\tau,\bm D(\widetilde{H})).
  \end{equation}
  By Lemma \ref{Lemma1-critical-cone}, $(\Delta\zeta,\Delta\Gamma)\in\mathcal{C}_K(\mathcal{X})$.
  i.e., \eqref{equa1-main-prop} follows. By the Moreau decomposition \cite{Moreau65},
  \[
   \big(\Delta t+\!\Delta\zeta,\bm D(\Delta\widetilde{X}\!+\!\Delta\widetilde{\Gamma})\big)
    =\big(\Delta\zeta,\bm D(\Delta\widetilde{\Gamma})\big)+\Pi_{\mathcal{C}_2^{\circ}}\big(\Delta t+\Delta\zeta,\bm D(\Delta\widetilde{X}+\Delta\widetilde{\Gamma})\big),
  \]
  which implies that
  $(\Delta t,\bm D(\Delta\widetilde{X}))
  =\Pi_{\mathcal{C}_2^{\circ}}(\Delta t+\Delta\zeta,\bm D(\Delta\widetilde{X}+\Delta\widetilde{\Gamma}))$.
  Notice that
  \begin{align*}
    &\tau\Delta t +{\textstyle\sum_{l=1}^r}\big\langle \mathcal{G}(\Delta\widetilde{X}_{a_la_l}),\mathcal{G}(\widetilde{Z}_{a_la_l})\big\rangle
    +\big\langle[\Delta\widetilde{X}_{bb}\ \  \widetilde{Z}_{bc}],[\Delta\widetilde{X}_{bc}\ \ \widetilde{Z}_{bc}]\big\rangle\\
    &=\big\langle(\Delta t,\bm D(\Delta\widetilde{X})),(\tau,\bm D(\widetilde{Z}))\big\rangle\le 0,
  \end{align*}
  for any $(\tau,Z)\in\mathcal{T}_K(\overline{t},\overline{X})\cap(\overline{\zeta},\overline{\Gamma})^{\perp}$,
  where the inequality is using Lemma \ref{Lemma1-critical-cone}. By Case (ii) of Lemma \ref{Lemma2-critical-cone},
  $(\Delta t,\Delta X)-(0,\overline{U}\mathfrak{X}(\Delta\widetilde{\Gamma})\overline{V}^{\mathbb{T}})\in[\mathcal{C}_K(\mathcal{X})]^{\circ}$.
  Thus, equation \eqref{equa2-main-prop} holds.

  \medskip

  We next prove that \eqref{equa3-main-prop} holds.
  By $(\Delta t,\bm D(\Delta\widetilde{X}))
  =\Pi_{\mathcal{C}_2^{\circ}}(\Delta t+\Delta\zeta,\bm D(\Delta\widetilde{X}+\Delta\widetilde{\Gamma}))$,
  \begin{align*}
   &\Delta t\Delta\zeta
   +{\textstyle\sum_{l=1}^r}\big\langle[\mathcal{G}(\Delta\widetilde{X}_1)]_{a_la_l},[\mathcal{G}(\Delta\widetilde{\Gamma}_1)]_{a_la_l}\big\rangle
   +\big\langle[\Delta\widetilde{X}_{bb}\ \ \Delta\widetilde{X}_{bc}],[\Delta\widetilde{X}_{bb}\ \ \Delta\widetilde{\Gamma}_{bc}]\big\rangle\nonumber\\
   =&\ \big\langle (\Delta t,\bm D(\Delta\widetilde{X})),(\Delta\zeta,\bm D(\Delta\widetilde{\Gamma}))\big\rangle
  =\big\langle\Pi_{\mathcal{C}_2^{\circ}}(\tau,\bm D(\widetilde{H})),\Pi_{\mathcal{C}_2}(\tau,\bm D(\widetilde{H}))\big\rangle
  =0.
 \end{align*}
  Together with the arguments for Case (ii) of Lemma \ref{Lemma2-critical-cone} and
  equation \eqref{equa6-pdird-case2},
  we know that
  \begin{align}\label{main-equa1-case2}
    &\langle(\Delta t,\Delta X),(\Delta\zeta,\Delta\Gamma)\rangle
     =\langle \mathfrak{X}(\Delta\widetilde{\Gamma}),\Delta\widetilde{\Gamma}\rangle\nonumber\\
   &= 2\langle[\mathcal{G}(\Delta\widetilde{X}_1)]_{\alpha\beta},[\mathcal{G}(\Delta\widetilde{\Gamma}_1)]_{\alpha\beta}\rangle
       +\langle \mathcal{H}(\Delta\widetilde{X}_{\alpha\alpha}),\mathcal{H}(\Delta\widetilde{\Gamma}_{\alpha\alpha})\rangle\nonumber\\
   &\quad +2\langle [\mathcal{H}(\Delta\widetilde{X}_1)]_{\alpha\beta},[\mathcal{H}(\Delta\widetilde{\Gamma}_1)]_{\alpha\beta}\rangle
       +\langle\Delta\widetilde{X}_{\alpha c},\Delta\widetilde{\Gamma}_{\alpha c}\rangle.
  \end{align}
  We next consider each term on the right hand side of \eqref{main-equa1-case2} separately.
  By \eqref{equa5-pdird-case2}, we have
  \begin{align*}
  &2\langle [\mathcal{G}(\Delta\widetilde{X}_1)]_{\alpha\beta},[\mathcal{G}(\Delta\widetilde{\Gamma}_1)]_{\alpha\beta}\rangle \\
  &=\!2\sum_{l=1}^{r_0}\!\sum_{l'=r_0+1}^{r+1}\!\frac{\theta}{\overline{\nu}_l}\|[\mathcal{G}(\Delta\widetilde{\Gamma}_1)]_{a_la_{l'}}\|^2
   \!-\!2\!\sum_{l'=r_0+1}^{r+1}\!\sum_{l=1}^{r_0}\!\frac{\theta\overline{u}_{l'}}{\overline{\nu}_l}
     \|[\mathcal{G}(\Delta\widetilde{\Gamma}_1)]_{a_la_{l'}}\|^2,
  \end{align*}
  while from equations \eqref{equa7-pdird-case2} and \eqref{equa8-pdird-case2}
  it is not difficult to obtain that
  \begin{align*}
  &\langle \mathcal{H}(\Delta\widetilde{X}_{\alpha\alpha}),\mathcal{H}(\Delta\widetilde{\Gamma}_{\alpha\alpha})\rangle
  +2\langle [\mathcal{H}(\Delta\widetilde{X}_1)]_{\alpha\beta},[\mathcal{H}(\Delta\widetilde{\Gamma}_1)]_{\alpha\beta}\rangle \nonumber\\
  =&\ 2\sum_{l=1}^{r_0}\!\sum_{l'=1}^{r_0}\!\frac{\theta}{\overline{\nu}_{l}+\overline{\nu}_{l'}}
    \|[\mathcal{H}(\Delta\widetilde{\Gamma}_1)]_{a_la_{l'}}\|^2
   \!+\!2\!\sum_{l=1}^{r_0}\sum_{l'=r_0+1}^{r+1}\!\frac{\theta+\theta\overline{u}_{l'}}{\overline{\nu}_l}
      \|[\mathcal{H}(\Delta\widetilde{\Gamma}_1)]_{a_la_{l'}}\|^2.
  \end{align*}
  In addition, by using \eqref{equa9-pdird-case2}, we can easily obtain that
  \(
    \langle\Delta\widetilde{X}_{\alpha c},\Delta\widetilde{\Gamma}_{\alpha c}\rangle
    =\sum_{l=1}^{r_0}\frac{\theta}{\overline{\nu}_l}\|\Delta\widetilde{\Gamma}_{a_lc}\|^2.
  \)
  Substituting the three equalities into equation \eqref{main-equa1-case2} then yields that
  \begin{align}\label{one-hand-case2}
    &\langle(\Delta t,\Delta X),(\Delta\zeta,\Delta\Gamma)\rangle \nonumber\\
    =&\ \!2\sum_{l=1}^{r_0}\!\sum_{l'=r_0+1}^{r+1}\!\frac{\theta}{\overline{\nu}_l}\|[\mathcal{G}(\Delta\widetilde{\Gamma}_1)]_{a_la_{l'}}\|^2
   \!-\!2\!\sum_{l'=r_0+1}^{r+1}\!\sum_{l=1}^{r_0}\!\frac{\theta\overline{u}_{l'}}{\overline{\nu}_l}
     \|[\mathcal{G}(\Delta\widetilde{\Gamma}_1)]_{a_la_{l'}}\|^2
   +\!\sum_{l=1}^{r_0}\frac{\theta}{\overline{\nu}_l}\|\Delta\widetilde{\Gamma}_{a_lc}\|^2\nonumber\\
   &+2\sum_{l=1}^{r_0}\!\sum_{l'=1}^{r_0}\!\frac{\theta}{\overline{\nu}_{l}+\overline{\nu}_{l'}}
     \|[\mathcal{H}(\Delta\widetilde{\Gamma}_1)]_{a_la_{l'}}\|^2
    +2\!\sum_{l=1}^{r_0}\sum_{l'=r_0+1}^{r+1}\!\frac{\theta+\theta\overline{u}_{l'}}{\overline{\nu}_l}
    \|[\mathcal{H}(\Delta\widetilde{\Gamma}_1)]_{a_la_{l'}}\|^2.
  \end{align}
  On the other hand, by using $\zeta=-\theta$, $\overline{X}=\overline{U}[{\rm Diag}(\overline{\sigma})\ \ 0]\overline{V}^{\mathbb{T}}$
  and the definitions of $\mathcal{B}$ and $\overline{P}$ in Lemma \ref{Lemma-Upsilon},
  we calculate that
  \begin{align*}
   &-2\overline{\zeta}\sum_{j=1}^{r_0}{\rm tr}\Big(\overline{P}_{\!a_j}^{\mathbb{T}}
     \big[\mathcal{B}(\Delta\Gamma)(\mathcal{B}(\overline{X})-\overline{\nu}_jI)^\dag\mathcal{B}(\Delta\Gamma)\big]\overline{P}_{\!a_j}\Big)\\
   =&\ 2\sum_{l=1}^{r_0}\sum_{l'=r_0+1}^{r+1}\!\frac{\theta}{-\!\overline{\nu}_l}
      \|[\mathcal{G}(\Delta\widetilde{\Gamma}_1)]_{a_la_{l'}}\|^2
     -\!2\sum_{l=1}^{r_0}\sum_{l'=1}^{r+1}\frac{\theta}{\overline{\nu}_l+\overline{\nu}_{l'}}
     \|[\mathcal{H}(\Delta\widetilde{\Gamma}_1)]_{a_la_{l'}}\|^2
    -\!\sum_{l=1}^{r_0}\frac{\theta}{\overline{\nu}_l}\|\Delta\widetilde{\Gamma}_{a_lc}\|^2.\nonumber
  \end{align*}
  Notice that $\langle[\mathcal{G}(\Delta\widetilde{X}_1)]_{a_la_{l'}},[\mathcal{H}(\Delta\widetilde{X}_1)]_{a_la_{l'}}\rangle=0$
  for $l\in\{1,\ldots,r_0\}$ and $l'\in\{r_0\!+\!1,\ldots,r\!+\!1\}$. By this, we may calculate that
  \begin{align*}
  & 2\left\langle \big[\Sigma_{\beta\beta}(\overline{\Gamma})\ \ 0\big],
    \big[\overline{U}_{\!\beta}^{\mathbb{T}}\Delta X \overline{X}^{\dag}\Delta X\overline{V}_{\beta}\ \ \overline{U}_{\!\beta}^{\mathbb{T}}\Delta X \overline{X}^{\dag}\Delta X\overline{V}_{2}\big]\right\rangle\nonumber\\
  =&\ 2\sum_{l'=r_0+1}^{r+1}\sum_{l=1}^{r_0}\frac{\theta\overline{u}_{l'}}{\overline{\nu}_l}
    \|[\mathcal{G}(\Delta\widetilde{\Gamma}_1)]_{a_la_{l'}}\|^2
   +2\sum_{l'=r_0+1}^{r+1}\sum_{l=1}^{r_0}\frac{\theta\overline{u}_{l'}}{-\overline{\nu}_l}
    \|[\mathcal{H}(\Delta\widetilde{\Gamma}_1)]_{a_la_{l'}}\|^2.
  \end{align*}
  Substituting the last two equalities into the expression of
  $\Upsilon_{(\overline{t},\overline{X})}\big((\overline{\zeta},\overline{\Gamma}),(\Delta\zeta,\Delta\Gamma)\big)$
  and then comparing with equation \eqref{one-hand-case2},
  we obtain equation \eqref{equa3-main-prop}.

  \medskip

  Conversely, suppose that \eqref{equa1-main-prop} and \eqref{equa3-main-prop} hold.
  By Lemma \ref{tcone-lemma} it suffices to show that equation \eqref{proj-equa} holds.
  If $\mathcal{X}\in{\rm int}\,K$, then $(\Delta t,\Delta X)=(0,0)$ since
  $[\mathcal{C}_K(\mathcal{X})]^{\circ}=\{(0,0)\}$.
  If $\mathcal{X}\in{\rm int}\,K^{\circ}$, then $(\Delta \zeta,\Delta \Gamma)=(0,0)$
  since $\mathcal{C}_K(\mathcal{X})=\{(0,0)\}$. Together with the directional
  derivative of $\Pi_K$ in the two cases, we obtain equation \eqref{proj-equa}.
  Next we assume that $\mathcal{X}\notin{\rm int}\,K\cup{\rm int}\,K^{\circ}$.
  Then it must hold that
  \begin{equation}\label{inner-product-equa}
   \big\langle (\Delta t,\Delta X)-\widehat{\mathfrak{X}}((\Delta\zeta,\Delta\Gamma)),(\Delta\zeta,\Delta\Gamma)\big\rangle=0,
  \end{equation}
  where $\widehat{\mathfrak{X}}\!:\mathbb{X}\to\mathbb{X}$ is defined by
  $\widehat{\mathfrak{X}}(\omega,W):=(0,\overline{U}\mathfrak{X}(\overline{U}^{\mathbb{T}}W\overline{V})\overline{V}^{\mathbb{T}})$
  for any $(\omega,W)\in\mathbb{X}$,
  Indeed, from the previous arguments in Subcase 3.1 and Subcase 3.2, we know that
  \begin{align}\label{sigma-term}
   \langle\widehat{\mathfrak{X}}((\Delta\zeta,\Delta\Gamma)),(\Delta\zeta,\Delta\Gamma)\rangle
   &=\langle(0,\overline{U}\mathfrak{X}(\Delta\widetilde{\Gamma})\overline{V}^{\mathbb{T}}),(\Delta\zeta,\Delta\Gamma)\rangle
   =-\Upsilon_{(\overline{t},\overline{X})}((\overline{\zeta},\overline{\Gamma}),(\Delta\zeta,\Delta\Gamma))\nonumber\\
   &=-\sigma\big((\overline{\zeta},\overline{\Gamma}),\mathcal{T}_K^2((\overline{t},\overline{X}),(\Delta\zeta,\Delta\Gamma))\big).
  \end{align}
  Along with \eqref{equa3-main-prop}, we obtain equality \eqref{inner-product-equa}.
  Combining  \eqref{inner-product-equa} with \eqref{equa1-main-prop}-\eqref{equa2-main-prop} yields that
  \[
    \left[\left(\begin{matrix}
     \Delta t \\ \Delta X
     \end{matrix}\right)+\left(\begin{matrix}
     \Delta \zeta\\ \Delta\Gamma
     \end{matrix}\right)\right]
     -\left(\begin{matrix}
     \Delta \zeta\\ \Delta\Gamma
     \end{matrix}\right)-
     \widehat{\mathfrak{X}}\left(\begin{matrix}
    \Delta \zeta\\ \Delta\Gamma
     \end{matrix}\right)\in\mathcal{N}_{\mathcal{C}_K(\mathcal{X})}
     \left(\begin{matrix}
     \Delta \zeta\\ \Delta\Gamma
     \end{matrix}\right).
  \]
  Together with \eqref{sigma-term}, using \cite[Theorem 7.2]{BS98} with
  $d=(\Delta t+\Delta\zeta,\Delta X+\Delta\Gamma)$ and $u^*=(\Delta\zeta,\Delta\Gamma)$
  shows that equation \eqref{proj-equa} holds. The proof is then completed.
 \end{aproof}
 \end{document}